\documentclass{amsart}

\usepackage{amsmath,amssymb,amsthm}
\usepackage{cases}

\newtheorem{theorem}{Theorem}[section]
\newtheorem{proposition}[theorem]{Proposition}
\newtheorem{lemma}[theorem]{Lemma}
\newtheorem{remark}[theorem]{Remark}
\newtheorem{definition}[theorem]{Definition}

\begin{document}

\title{On a resolvent estimate for bidomain operators and its applications}

\author{Yoshikazu Giga}
\address{Graduate School of Mathematical Sciences, The University of Tokyo, 3-8-1 Komaba Meguro-ku Tokyo 153-8914, Japan. }
\curraddr{}
\email{labgiga@ms.u-tokyo.ac.jp}
\thanks{The first author is partly supported by the Japan Society for the Promotion of Science through the grant Kiban S (26220702) and Kiban B (16H03948)}

\author{Naoto Kajiwara}
\address{Graduate School of Mathematical Sciences, The University of Tokyo, 3-8-1 Komaba Meguro-ku Tokyo 153-8914, Japan.}
\curraddr{}
\email{kajiwara@ms.u-tokyo.ac.jp}
\thanks{}

\subjclass[2010]{Primary 35D35 35M10 47B38}

\keywords{bidomain model, cardiac electrophysiology, resolvent estimate, blow-up argument}

\date{}

\dedicatory{}

\begin{abstract}
We study bidomain equations that are commonly used as a model to represent the electrophysiological wave propagation in the heart. 
We prove existence, uniqueness and regularity of a strong solution in $L^p$ spaces. 
For this purpose we derive an $L^\infty$ resolvent estimate for the bidomain operator by using a contradiction argument based on a blow-up argument. 
Interpolating with the standard $L^2$-theory, we conclude that bidomain operators generate $C_0$-analytic semigroups in $L^p$ spaces, which leads to construct a strong solution to a bidomain equation in $L^p$ spaces. 
\end{abstract}

\maketitle

\section{Introduction}

The bidomain model is a system related to intra- and extra-cellular electric potentials and some ionic variables. 
Mathematically, bidomain equations can be written as two partial differential equations coupled with a system of $m$ ordinary differential equations: 
\begin{align}
&\partial_{t}u+f(u,w)-\nabla\cdot(\sigma_{i}\nabla u_{i})=s_{i} & \mathrm{in}&~(0,\infty)\times\Omega,\label{inner} \\
&\partial_{t}u+f(u,w)+\nabla\cdot(\sigma_{e}\nabla u_{e})=-s_{e} & \mathrm{in}&~(0,\infty)\times\Omega,\label{exter} \\
&\partial_{t}w+g(u,w)=0 & \mathrm{in}&~(0,\infty)\times\Omega,\label{ODE} \\
&u=u_{i}-u_{e} & \mathrm{in}&~(0,\infty)\times\Omega,\label{action potential} \\
&\sigma_{i}\nabla u_{i}\cdot n=0,~\sigma_{e}\nabla u_{e}\cdot n=0 & \mathrm{on}&~(0,\infty)\times\partial\Omega,\label{boundary} \\
&u(0)=u_{0},~w(0)=w_{0} & \mathrm{in}&~\Omega.\label{initial data}
\end{align}

Here, functions $u_i$ and $u_e$ are intra- and extra-cellular electric potentials, $u$ is the transmembrane potential (or the action potential) and $w=w(t,x)\in\mathbb{R}^m(m\in\mathbb{N})$ is some ionic variables  (current, gating variables, concentrations, etc.). 
All these functions are unknown. 
On the other hand, the physical region occupied by the heart $\Omega\subset\mathbb{R}^d$, conductivity matrices $\sigma_{i,e}=\sigma_{i,e}(x)$, external applied current sources $s_{i,e}=s_{i,e}(t,x)$, total transmembrane ionic currents $f:\mathbb{R}\times\mathbb{R}^m\to\mathbb{R}$ and $g:\mathbb{R}\times\mathbb{R}^m\to\mathbb{R}^m$ and initial data $u_0$ and $w_0$ are given. 
The symbol $n$ denotes the unit outward normal vector to $\partial\Omega$. 
The reader is referred to the books \cite{CPS0} and \cite{KS} about mathematical physiology including  bidomain models. 

There are some literature about well-posedness of bidomain equations. 
First pioneering work is due to P. Colli-Franzone and G. Savar$\mathrm{\acute{e}}$ \cite{CS}. 
They introduced a variational formulation and derived existence, uniqueness and some regularity results in Hilbert spaces. 
Here, they assumed nonlinear terms $f,g$ are forms of $f(u,w)=k(u)+\alpha w$, $g(u,w)=-\beta u+\gamma w~(\alpha, \beta, \gamma\ge0)$ with a suitable growth condition on $k$. 
Examples include cubic-like FitzHugh-Nagumo model, which is the most fundamental electrophysiological model. 
However, other realistic models cannot be handled by their approach because nonlinear terms are limited. 
Later M. Veneroni \cite{V1} extended to their results by using fixed point argument and established well-posedness of more general and more realistic ionic models. 
These two papers discussed strong solutions by deriving further regularity of weak solutions. 
In 2009, Y. Bourgault, Y. Coudi$\mathrm{\acute{e}}$re and C. Pierre \cite{BCP} showed well-posedness of a strong solution in $L^2$ spaces. 
They transformed bidomain equations into an abstract evolution equation of the form 
\begin{align*}
\begin{cases}
\partial_t u + Au + f(u,w) =s, \\
\partial_t w + g(u,w) =0 
\end{cases}
\end{align*}
by introducing the bidomain operator $A$ in $L^2$ and modified source term $s$. 
Formally the bidomain operator is the harmonic mean of two elliptic operators, i.e. $(A_i^{-1}+A_e^{-1})^{-1}$ or $A_i(A_i+A_e)^{-1}A_e$, where $A_{i,e}$ is the elliptic operator $-\nabla\cdot(\sigma_{i,e}\nabla~\cdot~)$ with the homogeneous Neumann boundary condition. 
They proved that the bidomain operator is a non-negative self-adjoint operator by considering corresponding weak formulations. 
Since their framework is in $L^2$, well-posedness was only proved for $d\le 3$ in $L^2$ spaces. 

The main goal of this paper is to establish $L^p$-theory $(1<p<\infty)$ and $L^\infty$-theory for the bidomain operator with applications to bidomain equations. 
More explicitly, we shall prove that the bidomain operator forms an analytic semigroup $e^{-tA}$ both in $L^p$ and $L^\infty$. 
By this result we are able to construct a strong solution in $L^p$ for any space dimension $d$ (by taking $p$ large if necessary). 
Our result allows any locally Lipschitz nonlinear terms. 

To derive analyticity it is sufficient to derive resolvent estimates. 
For $L^p$ resolvent estimates a standard way is to use the Agmon's method (e.g. \cite{Lun}, \cite{Tan2}). 
The main idea of the method is as follows. 
If we have a $W^{2,p}(\Omega\times\mathbb{R})$ a priori estimate for the operator $A-e^{i\theta}\partial_{tt}$, then $A$ has an $L^p$ resolvent estimate. 
Unfortunately, it seems difficult to derive such a $W^{2,p}$ a priori estimate because of nonlocal structure of the bidomain operator. 
Thus we argue in a different way. 

We first establish an $L^\infty$ resolvent estimate for the bidomain operator by a contradiction argument including a blow-up argument. 
We then derive an $L^p$ resolvent estimate for $2\le p\le\infty$ by interpolating $L^2$ and $L^\infty$ results. 
The $L^p$-theory for $1<p<2$ is established by a duality argument. 
Note that a standard idea to derive an $L^\infty$ resolvent estimate due to Masuda-Stewart (see the third next paragraph) does not apply because their method is based on an $L^p$ resolvent estimate, which we would like to prove. 

A blow-up argument was first introduced by E. De Giorgi \cite{Dio} in order to study regularity of a minimal surface. 
It is also efficient to derive a priori estimates for solutions of a semilinear elliptic problem \cite{GS} and a semilinear parabolic problem \cite{G1}, \cite{GK}. 
Recently, K. Abe and the first author \cite{AG}, \cite{AG2} showed that the Stokes operator is a generator of an analytic semigroup on $C_{0,\sigma}(\Omega)$, the $L^\infty$-closure of $C_{c,\sigma}^\infty(\Omega)$ (the space of smooth solenoidal vector fields with compact support in $\Omega$) for some class of domains $\Omega$ including bounded and exterior domains by using a blow-up argument for a nonstationary problem. 
For a direct proof extending the Masuda-Stewart method for resolvent estimates, see \cite{AGH}. 
Suzuki \cite{Suz} showed analyticity of semigroups generated by higher order elliptic operators in $L^\infty$ spaces by a blow-up method even if the domain has only uniformly $C^1$ regularity for resolvent equations. 
Our approach is similar to his approach, but boundary conditions are different and our equations are systems. 
For the Dirichlet boundary condition, we can easily take a cut-off function and a test function. 
However, for the Neumann boundary condition, we have to take a cut-off function and a test function carefully so that we does not violate boundary conditions. 

Our method is based on a contradiction argument together with a blow-up argument. 
Let us explain a heuristic idea. 
Suppose that we would like to prove that 
\[|\lambda|\|u\|_{\infty}\le C\|s\|_{\infty}\]
with some $C>0$ independent of sufficiently large $\lambda$, $u$ and $s$ which satisfy the resolvent equation $\lambda u+Au=s$ in $\Omega$. 
Here, $\|\cdot\|_\infty$ denotes the $L^\infty(\Omega)$ norm. 
Suppose that the estimate were false. 
Then there would exists a sequence $\{\lambda_k\}_{k=1}^\infty$, $|\lambda_k|\to\infty$ and $\{u_k,s_k\}$ satisfy $\lambda_k u_k+Au_k=s_k$ in $\Omega$ such that $|\lambda_k|\|u_k\|_\infty > k\|s_k\|_\infty$. 
By normalizing $u_k$ to introduce $v_k=u_k/\|u_k\|_\infty$, we observe that $\|v_k\|_\infty=1$. 
We take $\{x_k\}_{k=1}^\infty \subset\Omega$ such that $|v_k(x_k)|>1/2$. 
We rescale $w_k(x)=v_k(x_k +x/|\lambda_k|^{1/2})$. 
This function solves the equation $e^{i\theta_k}w_k+A_kw_k=t_k$ in $\Omega_k$ with $A_k\to A_0$ if $A$ has a nice scaling property, where $e^{i\theta_k}=\lambda_k/|\lambda_k|$, $t_k(x)=s_k(x_k+x/|\lambda_k|^{1/2})/|\lambda_k|\|u_k\|_\infty$ and $\Omega_k:=|\lambda_k|^{1/2}(\Omega-x_k)$. 
Here, $A_0$ is the bidomain operator with a constant coefficient. 
Since $|\lambda_k|\to\infty$, the rescaled domain $\Omega_k$ converges to either the whole space or the half space. 
If $w_k$ converges to some $w$, then $w$ solves the limit equation $e^{i\theta_\infty}w+A_0w=0$ since $\|t_k\|_\infty <1/k$. 
If the convergence is strong enough, then the assumption $|w_k(0)|>1/2$ implies $|w(0)|\ge1/2$. 
However, if the solution of the limit equation $e^{i\theta_\infty}w+A_0w=0$ is unique, i.e. $w=0$, then we get a contradiction. 
The key step is a local `Compactness' of the blow-up sequence $\{w_k\}_{k=1}^\infty$ near zero to conclude $|w(0)|\ge1/2$ and `Uniqueness' of a blow-up limit. 

Let us explain some literatures for $L^\infty$-theory. 
For the Laplace operator or general elliptic operators it is well known that the corresponding semigroup is analytic in $L^\infty$-type spaces. 
K. Yosida \cite{Yos} considered the second order elliptic operator on $\mathbb{R}$. 
It was difficult to extend his method for multi-dimensional elliptic operators. 
K. Masuda \cite{Mas1}, \cite{Mas2} (see also \cite{Mas3}) first proved the analyticity of the semigroup generated by a general elliptic operator (even for higher-order elliptic operators) in $C_0(\mathbb{R}^d)$, the space of continuous functions vanishing at the space-infinity. 
For general domains, H. B. Stewart treated Dirichlet conditions \cite{Ste1} and general boundary conditions \cite{Ste2}. 
Their methods are based on a localization with $L^p$ results and interpolation inequalities. 
The reader may refer to the comprehensive book written by A. Lunardi \cite[Chapter 3]{Lun} for the Masuda-Stewart method which applies to many other cases. 
However, in our situations, we cannot apply these methods since we do not have $L^p$ estimates. 

Originally, bidomain equations were derived at a microscopic level. 
The cardiac cellular structure of the tissue can be viewed by disjoint unions of two regions separated by the interface, i.e. $\Omega=\Omega_i\cup\Omega_e\cup\Gamma$, where $\Omega_i$ and $\Omega_e$ are disjoint intra- and extra cellular domains and $\overline{\Gamma}=\partial\Omega_i\cap\partial\Omega_e$ is their interface called the active membrane. 
When we consider this model, the intra- and extra cellular potential $u_{i,e}$ are functions in $\overline{\Omega_{i,e}}$ respectively, and transmembrane potential $u=u_i-u_e$ is the function on $\overline{\Gamma}$. 
Bidomain equations are replaced to equations on $\Omega_i$, $\Omega_e$ and $\Gamma$ in this microscopic model. 
The dynamics inside the heart is much complicated. 
There are only a few papers (e.g. \cite{CS}, \cite{V2}) because of standard techniques and results on reaction diffusion equation systems cannot be directly applied. 
H. Matano and Y. Mori \cite{MM1} showed existence and uniqueness of a global classical solution for 3D cable model which is one of the microscopic cellular model by proving a uniform $L^\infty$ bound of solutions. 

Conversely at a macroscopic model, the cardiac tissue can be represented by a continuous model (called ``bi"domain model), i.e. $\Omega=\Omega_i=\Omega_e=\Gamma$ though each point of the heart $\Omega$ is one of the interior part $\Omega_i$ or exterior part $\Omega_e$ or their boundary $\Gamma$. 
Formal derivation from microscopic model to macroscopic model was shown by a homogenization process when a periodic cardiac structure \cite{KS}, \cite{NK}. 
The authors of \cite{CPS} showed a rigorous mathematical derivation of the macroscopic model by using the tools of the $\Gamma$-convergence theory. 
The paper \cite{ACS} studied the asymptotic behavior of the family of vectorial integral functionals, which is concerned with bidomain model, in the framework of $\Gamma$-convergence. 
The bidomain model is also used to analyze nonconvex mean curvature flow as a diffuse interface approximation \cite{A}, \cite{BPP}, \cite{BCP0}. 
Nonconvexity leads to the gradient flow of a nonconvex functional, which corresponds in general to an ill-posed parabolic problem. 
To study an ill-posed problem, it is often efficient to regularize it, for example by adding some higher order term, and then passing to the limit as the regularizing parameter goes to zero. 
However, papers \cite{A}, \cite{BPP}, \cite{BCP0} introduced completely different regularization, namely, to use bidomain equations, where hidden anisotrophy plays a key role. 
Recently in \cite{MM2}, interesting phenomena about stability of traveling wave solutions was found for bidomain Allen-Cahn equations, which is quite different from classical Allen-Cahn equations. 
This is also relevant to the hidden anisotropy of the bidomain model. 

The outline of this paper is as follows. 
In Section 2 after preparing a few notations, we state an $L^\infty$ resolvent estimate for bidomain equations, which is a key estimate of analyticity in $L^p$ and $L^\infty$ spaces. 
In Section 3 we give our proof of an $L^\infty$ resolvent estimate by using a blow-up argument. 
In Section 4 the system of bidomain equations is replaced by a single equation by using bidomain operators in $L^p$ spaces. 
Then we show existence and uniqueness of the solution. 
The method is based on a continuity method \cite{GT}. 
We also establish $L^p$ and $L^\infty$ resolvent estimates for bidomain operators based in our analysis in Section 3. 
In Section 5 to solve original problem ($\ref{inner}$)-($\ref{initial data}$) in $L^p$ we define bidomain operators and domains of their fractional powers in order to handle nonlinear terms $f,g$ having only locally Lipschitz continuity. 
From an $L^p$ resolvent estimate, we show bidomain operators are sectorial operators and then we derive existence, uniqueness and regularity of a strong solution to ($\ref{inner}$)-($\ref{initial data}$) in $L^p$ spaces. 

The authors are grateful to Professor Yoichiro Mori for guiding them to this problem. 

\section{Resolvent estimate for bidomain equations}

\subsection{Preliminaries, notations and definitions}

In this subsection we give a rigorous setting in order to state an $L^\infty$ resolvent estimate. 
We first recall the definition of uniformly $C^k$-domain for $k\ge1$ and function spaces $W_{loc}^{2,p}(\overline{\Omega})$. 

Let $B(x_0,r)$ be an open ball with center $x_0$, radius $r>0$, i.e. $B(x_0,r)=\{x\in\mathbb{R}^d\mid|x-x_0|<r\}$. 

\begin{definition}[Uniformly $C^k$-domain]
Let $\Omega\subset\mathbb{R}^d$ be a domain with $d\ge2$. 
We say that $\Omega$ is a uniformly $C^k$-domain $(k\ge1)$ if there exist $K>0$ and $r>0$ such that for each point $x_0\in\partial\Omega$ there exists a $C^k$ function $\gamma$ of $d-1$ variables $x'$ such that -upon relabeling, reorienting and rotation the coordinates axes if necessary- we have 
\begin{align*}
\Omega\cap B(x_0,r)&=\left\{x=(x',x_d)\in B(x_0,r)\mid x_d>\gamma(x')\right\}, \\
\|\gamma\|_{C^k(\mathbb{R}^{d-1})}&=\displaystyle\sup_{|\alpha|\le k,~x'\in\mathbb{R}^
{d-1}}|\partial_{x'}^\alpha \gamma(x')|\le K. 
\end{align*}
\end{definition}

\begin{definition}
We say $u\in W_{loc}^{2,p}(\overline{\Omega})$ if there exists $v\in W_{loc}^{2,p}(\mathbb{R}^d)$ such that $u=v$ a.e. in $\overline{\Omega}$. 
\end{definition}

The conductivity matrices $\sigma_{i,e}$ are functions of the space variable $x\in\overline{\Omega}$ with coefficients $C^1(\overline{\Omega})$ and satisfy the uniform ellipticity condition. 
Namely, we assume that there exist constants $0<\underline{\sigma}<\overline{\sigma}$ such that 
\begin{align}
\underline{\sigma}|\xi|^2\le\langle\sigma_{i,e}(x)\xi,\xi\rangle\le\overline{\sigma}|\xi|^2\label{UE}
\end{align}
for all $x\in\overline{\Omega}$ and $\xi\in\mathbb{R}^d$. 
Let $a=a(x)$ denote unit tangent vector at the point $x\in\partial\Omega$. 
Set the longitudinal conductances $k_{i,e}^l:\partial\Omega\to\mathbb{R}$ and the transverse conductances $k_{i,e}^t:\partial\Omega\to\mathbb{R}$ along the fibers. 
Commonly used conductance tensors are of the form (\cite{CGT}) 
\begin{align*}
\sigma_{i,e}(x)=k_{i,e}^t(x)I+(k_{i,e}^l(x)-k_{i,e}^t(x))a(x)\otimes a(x)~~(x\in\partial\Omega). 
\end{align*}
By this form we have the normal $n$ is the eigenvector of $\sigma_{i,e}$ whose eigenvalue is $k_{i,e}^t(x)$:  
\begin{align*}
\sigma_{i,e}(x)n(x)=k_{i,e}^t(x)n(x)~(x\in\partial\Omega). 
\end{align*}
When the model is constructed, these are naturally considered. 
Under these assumptions of $\sigma_{i,e}$,  we have the property of boundary conditions: 
\begin{align}
\sigma_{i,e}\nabla u\cdot n=0\Leftrightarrow\nabla u\cdot n=0~~~{\rm on}~\partial\Omega. \label{EV}
\end{align}

Source terms $s_{i,e}$ also have important property. 
In physiology no current flow outside through boundary $\partial\Omega$ and the intra- and extra-cellular media communicate electrically through the transmembrane. 
Hereafter we assume current conservation; 
\begin{align*}
\int_\Omega(s_i(t)+s_e(t))dx=0~(t\ge0). 
\end{align*}
This is nothing but the compatibility condition for bidomain equations. 
This averaging zero condition is used when we transform the system of bidomain equations (\ref{inner})-(\ref{initial data}) into single equation (\ref{z})-(\ref{u_e}). 

\subsection{Resolvent estimate}

We consider the following resolvent equations 
\begin{align*}
(\ast)
\begin{cases}
\lambda u-\nabla\cdot(\sigma_i\nabla u_i)=s\hspace{5mm}&{\rm in} \ \Omega,\\
\lambda u+\nabla\cdot(\sigma_e\nabla u_e)=s\hspace{5mm}&{\rm in} \ \Omega, \\
u=u_i-u_e\hspace{5mm}&{\rm in} \ \Omega, \\
\sigma_i\nabla u_i\cdot n=0,~\sigma_e\nabla u_e\cdot n=0\hspace{5mm}&{\rm on} \ \partial\Omega, 
\end{cases}
\end{align*}
corresponding to (\ref{inner})-(\ref{initial data}). 
These equations come from the Laplace transformation of linear part of bidomain equations. 

Let us state an $L^\infty$ resolvent estimate. 
We set $\Sigma_{\theta,M}:=\{\lambda\in\mathbb{C}\setminus\{0\}\mid |\arg\lambda|<\theta, M< |\lambda|\}$ and $N(u,u_i,u_e,\lambda)$ of the form  
\begin{align*}
N(u,u_i,u_e,\lambda):=\sup_{x\in\Omega}\left(|\lambda||u(x)|+|\lambda|^{1/2}\left(|\nabla u(x)|+|\nabla u_i(x)|+|\nabla u_e(x)|\right)\right). 
\end{align*}

\begin{theorem}[$L^\infty$ resolvent estimate for bidomain equations]\label{main result}
Let $\Omega\subset\mathbb{R}^d$ be a uniformly $C^2$-domain and $\sigma_{i,e}\in C^1(\overline{\Omega},\mathbb{S}^d)$ satisfy {\rm (\ref{UE})} and {\rm (\ref{EV})}. 
Then for each $\varepsilon\in(0,\pi/2)$ there exist $C>0$ and $M>0$ such that 
\begin{align*}
N(u,u_i,u_e,\lambda)\le C\|s\|_{L^{\infty}(\Omega)}
\end{align*}
for all $\lambda\in\Sigma_{\pi-\varepsilon,M}$, $s\in L^{\infty}(\Omega)$ and strong solutions $u,u_{i,e}\in \bigcap_{n<p<\infty}W^{2,p}_{loc}(\overline{\Omega})\cap W^{1,\infty}(\Omega)$ of $(\ast)$. 
\end{theorem}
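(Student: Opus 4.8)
The plan is to argue by contradiction, using a blow-up (rescaling) argument in the spirit of \cite{AG}, \cite{AG2}, \cite{Suz}, together with the uniqueness of a suitable limit problem. Suppose the estimate fails for some $\varepsilon\in(0,\pi/2)$. Then there exist $\lambda_k\in\mathbb{C}\setminus\{0\}$ with $|\arg\lambda_k|<\pi-\varepsilon$ and $|\lambda_k|\to\infty$, data $s_k\in L^\infty(\Omega)$, and strong solutions $(u_k,u_{i,k},u_{e,k})$ of $(\ast)$ with $N(u_k,u_{i,k},u_{e,k},\lambda_k)>k\|s_k\|_{L^\infty(\Omega)}$. Since $(\ast)$ is linear, since $A_i,A_e$ annihilate additive constants (so $(u_{i,k},u_{e,k})$ may be replaced by $(u_{i,k}+c,u_{e,k}+c)$), and since $N$ is positively homogeneous, I would normalize so that $N(u_k,u_{i,k},u_{e,k},\lambda_k)=1$ and $\|s_k\|_{L^\infty(\Omega)}<1/k\to0$. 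Write $\lambda_k=|\lambda_k|e^{i\theta_k}$, $|\theta_k|<\pi-\varepsilon$, and pass to a subsequence so that $\theta_k\to\theta_\infty$ with $|\theta_\infty|\le\pi-\varepsilon$. The normalization gives the uniform bounds $|\lambda_k|\|u_k\|_\infty\le1$ and $|\lambda_k|^{1/2}(\|\nabla u_k\|_\infty+\|\nabla u_{i,k}\|_\infty+\|\nabla u_{e,k}\|_\infty)\le1$.

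Next I would pick concentration points $x_k\in\overline{\Omega}$ at which the integrand defining $N$ exceeds $\tfrac12-\tfrac1k$, replace $(u_{i,k},u_{e,k})$ by $(u_{i,k}-u_{i,k}(x_k),u_{e,k}-u_{i,k}(x_k))$, rotate coordinates if $x_k$ is near $\partial\Omega$ so that the boundary becomes a graph, and rescale by setting $v_k(y):=|\lambda_k|\,u_k(x_k+|\lambda_k|^{-1/2}y)$ and analogously $v_{i,k},v_{e,k}$, on $\Omega_k:=|\lambda_k|^{1/2}(\Omega-x_k)$. With $\sigma_{i,e}^k(y):=\sigma_{i,e}(x_k+|\lambda_k|^{-1/2}y)$ and $t_k(y):=s_k(x_k+|\lambda_k|^{-1/2}y)$, the triple solves
\[
e^{i\theta_k}v_k-\nabla\cdot(\sigma_i^k\nabla v_{i,k})=t_k,\qquad e^{i\theta_k}v_k+\nabla\cdot(\sigma_e^k\nabla v_{e,k})=t_k,\qquad v_k=v_{i,k}-v_{e,k}\quad\text{in }\Omega_k,
\]
with $\sigma_i^k\nabla v_{i,k}\cdot n=\sigma_e^k\nabla v_{e,k}\cdot n=0$ on $\partial\Omega_k$, where now $\|t_k\|_\infty<1/k\to0$, $\|v_k\|_\infty\le1$, $v_{i,k}(0)=0$, $\|\nabla v_k\|_\infty,\|\nabla v_{i,k}\|_\infty,\|\nabla v_{e,k}\|_\infty\le1$, and $|v_k(0)|+|\nabla v_k(0)|+|\nabla v_{i,k}(0)|+|\nabla v_{e,k}(0)|\ge\tfrac12-\tfrac1k$. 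The crucial gain is that rescaling has turned the spectral parameter $\lambda_k$ into the bounded quantity $e^{i\theta_k}$. Since $\sigma_{i,e}^k$ converge in $C^1_{loc}$ to constant matrices $\sigma_{i,e}^\infty$ and, as $|\lambda_k|\to\infty$, the uniform $C^2$-regularity forces $\Omega_k$ to converge either to $\mathbb{R}^d$ or to a half-space (with $C^{1,\alpha}_{loc}$-convergent flattening boundaries), I expect to pass to a limit $(v,v_i,v_e)$ solving the constant-coefficient homogeneous problem $e^{i\theta_\infty}v+A_i^\infty v_i=0$, $e^{i\theta_\infty}v-A_e^\infty v_e=0$, $v=v_i-v_e$ (on $\mathbb{R}^d$, or on the half-space with homogeneous conormal conditions), with $v_i(0)=0$, all the above sup-bounds retained, and $|v(0)|+|\nabla v(0)|+|\nabla v_i(0)|+|\nabla v_e(0)|\ge\tfrac12$.

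The compactness underlying this limit is, I think, the technically delicate part. Subtracting the rescaled equations shows $v_{i,k}$ and $v_{e,k}$ each solve a scalar uniformly elliptic equation, e.g.\ $-\nabla\cdot(\sigma_i^k\nabla v_{i,k})=t_k-e^{i\theta_k}v_k$, whose right-hand side is bounded in $L^\infty_{loc}$ uniformly in $k$; using $\|v_{i,k}\|_{L^\infty(B_R)}\le R$ and $\|v_{e,k}\|_{L^\infty(B_R)}\le1+R$ (from the gradient bounds and the normalizations at $0$), interior $W^{2,p}$ estimates for all $p<\infty$ plus Sobolev embedding give uniform $C^{1,\alpha}_{loc}$ bounds, hence $C^1_{loc}$-precompactness of $v_{i,k},v_{e,k}$ and of $v_k=v_{i,k}-v_{e,k}$; combined with $\sigma^k_{i,e}\to\sigma^\infty_{i,e}$ this lets one pass to the limit in the equations. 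Near $\partial\Omega_k$ one instead needs boundary $W^{2,p}$ estimates for the conormal problem; this is where, as the introduction warns, the cut-off and test functions must be chosen so as not to violate the boundary condition, and one exploits (\ref{EV}) together with the eigenvector property $\sigma_{i,e}n=k^t_{i,e}n$, which is preserved under translation, dilation and rotation and so passes to the limit (the limiting constant matrix has the flat normal as an eigenvector). Throughout, the assumed regularity $u,u_{i,e}\in\bigcap_{n<p<\infty}W^{2,p}_{loc}(\overline{\Omega})\cap W^{1,\infty}(\Omega)$ is what permits these estimates and the integrations by parts.

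Finally I would invoke uniqueness for the limit problem. Eliminating $v_i,v_e$ via $A_i^\infty v_i=-e^{i\theta_\infty}v$, $A_e^\infty v_e=e^{i\theta_\infty}v$ and $v=v_i-v_e$ gives, on the Fourier side, $(e^{i\theta_\infty}+a(\xi))\widehat v=0$ for $\xi\neq0$, where $a(\xi)=\frac{\langle\sigma_i^\infty\xi,\xi\rangle\langle\sigma_e^\infty\xi,\xi\rangle}{\langle\sigma_i^\infty\xi,\xi\rangle+\langle\sigma_e^\infty\xi,\xi\rangle}\ge0$ is the symbol of the constant-coefficient bidomain operator; since $|\theta_\infty|\le\pi-\varepsilon$, $e^{i\theta_\infty}+a(\xi)$ never vanishes for real $\xi$, so $v\equiv0$ (the half-space case reducing to this by an even reflection across the flat boundary, which is consistent with $\sigma^\infty_{i,e}$ exactly because the normal is an eigenvector). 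Then $v_i=v_e$ is a constant-coefficient $A_i^\infty$-harmonic function with bounded gradient, hence affine, and with $v_i(0)=0$ it is linear, $v_i(y)=b\cdot y$. The step I expect to be the main obstacle is ruling out $b\neq0$ — i.e.\ showing $\nabla v_i=\nabla v_e=0$, which would contradict $|v(0)|+|\nabla v(0)|+|\nabla v_i(0)|+|\nabla v_e(0)|\ge\tfrac12$. This presumably requires going back to the blow-up sequence and choosing the additive constant for $(u_{i,k},u_{e,k})$ more carefully (using the compatibility structure of the system and $u_{i,k},u_{e,k}\in L^\infty(\Omega)$, so that the rescaled $v_{i,k},v_{e,k}$ stay bounded on all of $\Omega_k$, whence their affine limit must be constant, i.e.\ zero), or equivalently splitting into the subcases in which $|\lambda_k|\|u_k\|_\infty$, $|\lambda_k|^{1/2}\|\nabla u_k\|_\infty$, or $|\lambda_k|^{1/2}(\|\nabla u_{i,k}\|_\infty+\|\nabla u_{e,k}\|_\infty)$ carries the mass and treating the last, degenerate case separately. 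Once this is resolved, the contradiction is complete and the theorem follows.
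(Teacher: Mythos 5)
Your overall architecture — contradiction, normalization of $N$, rescaling about near-maximizing points, local $W^{2,p}$ compactness with carefully chosen cut-offs, passage to a constant-coefficient limit problem on $\mathbb{R}^d$ or a half-space, even reflection via (\ref{EV}), and uniqueness of the limit — is exactly the paper's proof. The genuine gap is the step you yourself flag at the end: excluding a nonzero linear part $v_i=v_e=b\cdot y$ in the blow-up limit. Your Fourier elimination only controls $v=v_i-v_e$ and is blind to the common affine part, and neither of your proposed repairs works as stated. In particular, fix (a) is unavailable: there is no uniform bound on $|\lambda_k|\|u_{i,k}\|_{L^\infty(\Omega)}$ (cf.\ Remark 2.3(i) of the paper — $(u,u_i+c,u_e+c)$ is again a solution for every constant $c$), so after normalizing $v_{i,k}(0)=0$ you only control the gradients, and $|v_{i,k}(y)|\le |y|$ is the best global bound; the rescaled functions cannot be made uniformly bounded on all of $\Omega_k$, so the limit is a priori affine, not constant.

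The paper closes this hole by a duality argument (Lemmas \ref{weak1} and \ref{weak2}) rather than by eliminating $v_i,v_e$: for every pair $\psi_i,\psi_e\in C_0^\infty$ with $\int(\psi_i-\psi_e)\,dx=0$ one solves the \emph{adjoint} constant-coefficient system
$e^{i\theta_\infty}(\phi_i+\phi_e)-\nabla\cdot(\sigma_{i\infty}\nabla\phi_i)=\psi_i$, $e^{i\theta_\infty}(\phi_i+\phi_e)-\nabla\cdot(\sigma_{e\infty}\nabla\phi_e)=\psi_e$
in $\mathcal{S}(\mathbb{R}^d)$ by the Fourier transform (the compatibility condition kills the zero of the symbol's determinant at $\xi=0$), which yields $(w_i,\psi_i)-(w_e,\psi_e)=0$ for all such data. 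Choosing $\psi_i=\psi_e$ gives $w=0$; choosing $\psi_e=0$ with $\int\psi_i\,dx=0$ gives $(w_i,\psi_i)=0$ for all mean-zero test functions, which forces $w_i$ to be \emph{constant} — a mean-zero test function such as $\partial_1\chi$ detects any linear part of $w_i$ — and this is what contradicts the lower bound $|w(0)|+|\nabla w(0)|+|\nabla w_i(0)|+|\nabla w_e(0)|\ge 1/2$ from the compactness step. So the mechanism you were missing is that the uniqueness statement for the limit problem must be proved in the form ``$w=0$ and $w_i=w_e=$ constant,'' and the restricted class of adjoint data (only the difference need have zero mean) is precisely calibrated to deliver the constancy of $w_i$ and not merely of $w$. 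With that lemma in place, the rest of your argument goes through.
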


\begin{remark}
${\rm(i)}$ It is impossible to derive an estimate $|\lambda|\|u_{i,e}\|_\infty\le C\|s\|_{L^\infty(\Omega)}$ because if $(u,u_i,u_e)$ is a triplet of strong solutions then so is $(u,u_i+c,u_e+c)$ for all $c\in\mathbb{R}$. \\
$\rm (ii)$ By the Sobolev embedding theorem {\rm \cite{AF}}, 
\begin{align*}
\bigcap_{n<p<\infty}W^{2,p}_{loc}(\overline{\Omega})\cap W^{1,\infty}(\Omega)\subset\bigcap_{0<\alpha<1}C^{1+\alpha}(\overline{\Omega}). 
\end{align*}
Hence $(u,u_i,u_e)$ are $C^1$ functions and the left-hand side of the resolvent estimate makes sense. 
\end{remark}

\section{Proof of an $L^\infty$ resolvent estimate}
\begin{proof}[Proof of Theorem \ref{main result}]
We divide the proof into five steps. 
The first two steps are reformulation of equations and estimates.
The last three steps (compactness, characterization of the limit and uniqueness) are crucial. 

\begin{flushleft}
\textbf {Step 1 (Normalization)}
\end{flushleft}

We argue by contradiction. 
Suppose that the statement were false. 
Then there would exist $\varepsilon\in(0,\pi/2)$, for any $k\in\mathbb{N}$ there would exist $\lambda_k=|\lambda_k|e^{i\theta_k}\in\Sigma_{\pi-\varepsilon,k}$, $s_k\in L^\infty(\Omega)$ and $u_k, u_{ik}, u_{ek}\in\bigcap_{n<p<\infty}W^{2,p}_{loc}(\overline{\Omega})\cap W^{1,\infty}(\Omega)$ which are strong solutions of resolvent equations
\begin{equation*}
\begin{cases}
\lambda_k u_k-\nabla\cdot(\sigma_{i}\nabla u_{ik})=s_k\hspace{5mm}&{\rm in} \ \Omega, \\
\lambda_k u_k+\nabla\cdot(\sigma_{e}\nabla u_{ek})=s_k\hspace{5mm}&{\rm in} \ \Omega, \\
u_k=u_{ik}-u_{ek}\hspace{5mm}&{\rm in} \ \Omega, \\
\sigma_{i}\nabla u_{ik}\cdot n=0,~\sigma_{e}\nabla u_{ek}\cdot n=0\hspace{5mm}&{\rm on} \ \partial\Omega, 
\end{cases}
\end{equation*}
with an $L^\infty$ estimate $N(u_k,u_{ik},u_{ek},\lambda_k)> k\|s_k\|_{L^{\infty}(\Omega)}$. 

We set 
\begin{align*}
\left(
\begin{array}{c}
v_k\\
v_{ik}\\
v_{ek}\\
\tilde{s}_k
\end{array}
\right)
&:=
\frac{1}{N(u_k,u_{ik},u_{ek},\lambda_k)}
\left(
\begin{array}{c}
|\lambda_k| u_k\ \\
|\lambda_k| u_{ik}\\
|\lambda_k| u_{ek}\\
s_k
\end{array}
\right).
\end{align*}
Then we get normalized resolvent equations of the form
\begin{equation*}
\begin{cases}
e^{i\theta_k}v_k-\frac{1}{|\lambda_k|}\nabla\cdot(\sigma_{i}\nabla v_{ik})=\tilde{s}_k\hspace{5mm}&{\rm in} \ \Omega, \\
e^{i\theta_k}v_k+\frac{1}{|\lambda_k|}\nabla\cdot(\sigma_{e}\nabla v_{ek})=\tilde{s}_k\hspace{5mm}&{\rm in} \ \Omega, \\
v_k=v_{ik}-v_{ek}\hspace{5mm}&{\rm in} \ \Omega, \\
\sigma_{i}\nabla v_{ik}\cdot n=0,~\sigma_{e}\nabla v_{ek}\cdot n=0\hspace{5mm}&{\rm on} \ \partial\Omega, 
\end{cases}
\end{equation*}
with estimates $\frac{1}{k}>\|\tilde{s}_k\|_{L^\infty(\Omega)}$ and 
\begin{align*}
&N\left(\frac{v_k}{|\lambda_k|},\frac{v_{ik}}{|\lambda_k|},\frac{v_{ek}}{|\lambda_k|},\lambda_k\right)\\
=&\sup_{x\in\Omega}\left(|v_k(x)|+|\lambda_k|^{-1/2}\left(|\nabla v_k(x)|+|\nabla v_{ik}(x)|+|\nabla v_{ek}(x)|\right)\right)\\
=&1. 
\end{align*}

\begin{flushleft}
\textbf{Step 2 (Rescaling)}
\end{flushleft}

Secondly, we rescale variables near maximum points of normalized $N$. 
By definition of supremum there exists $\{x_k\}_{k=1}^{\infty}\subset\Omega$ such that 
\begin{align*}
|v_k(x_k)|+|\lambda_k|^{-1/2}\left(|\nabla v_k(x_k)|+|\nabla v_{ik}(x_k)|+|\nabla v_{ek}(x_k)|\right)>\frac{1}{2}
\end{align*}
for all $k\in\mathbb{N}$. 
We rescale functions $\{(w_k,w_{ik},w_{ek})\}_{k=1}^\infty$, $\{t_k\}_{k=1}^\infty$, matrices $\{(\sigma_{ik},\sigma_{ek})\}_{k=1}^\infty$ and domain $\Omega_k$ with respect to $x_k$. 
Namely, we set 
\begin{align*}
\left(
\begin{array}{c}
w_k\\
w_{ik}\\
w_{ek}
\end{array}
\right)(x)
:=&
\left(
\begin{array}{c}
v_k\\
v_{ik}\\
v_{ek}
\end{array}
\right)\left(x_k+\frac{x}{|\lambda_k|^{1/2}}\right), \\
t_k(x):=&\tilde{s}_k\left(x_k+\frac{x}{|\lambda_k|^{1/2}}\right), \\
\sigma_{ik}(x):=\sigma_i\left(x_k+\frac{x}{|\lambda_k|^{1/2}}\right),~~&
\sigma_{ek}(x):=\sigma_e\left(x_k+\frac{x}{|\lambda_k|^{1/2}}\right), \\
\Omega_k:=&|\lambda_k|^{1/2}(\Omega-x_k). 
\end{align*}
By changing variables $\Omega\ni x\mapsto|\lambda_k|^{1/2}(x-x_k)\in\Omega_k$, we notice that our equations and our estimates can be rewritten of the form 
\begin{equation*}
\begin{cases}
e^{i\theta_k}w_k-\nabla\cdot(\sigma_{ik}\nabla w_{ik})=t_k\hspace{5mm}&{\rm in} \ \Omega_k, \\
e^{i\theta_k}w_k+\nabla\cdot(\sigma_{ek}\nabla w_{ek})=t_k\hspace{5mm}&{\rm in} \ \Omega_k, \\
w_k=w_{ik}-w_{ek}\hspace{5mm}&{\rm in} \ \Omega_k, \\
\sigma_{ik}\nabla w_{ik}\cdot n_k=0,~\sigma_{ek}\nabla w_{ek}\cdot n_k=0\hspace{5mm}&{\rm on} \ \partial\Omega_k, 
\end{cases}
\end{equation*}
with estimates 
\begin{align*}
&\frac{1}{k}>\|t_k\|_{L^\infty(\Omega_k)}, \\
&|w_k(0)|+|\nabla w_k(0)|+|\nabla w_{ik}(0)|+|\nabla w_{ek}(0)|>\frac{1}{2}, \\
&\displaystyle\sup_{x\in\Omega_k}\left(|w_k(x)|+|\nabla w_k(x)|+|\nabla w_{ik}(x)|+|\nabla w_{ek}(x)|\right)=1, 
\end{align*}
where $n_k$ denotes the unit outer normal vector to $\Omega_k$. 
Here, we remark that unknown functions $w_{ik}$ and $w_{ek}$ are defined up to an additive constant. 
So without loss of generality we may assume that $w_{ik}(0):=0$. 

\begin{flushleft}
\textbf{Step 3 (Compactness)}
\end{flushleft}

In this step, we will show local uniform boundedness for $\{(w_k, w_{ik}, w_{ek})\}_{k=1}^\infty$. 
If these sequences are bounded, one can take subsequences $\{(w_{k_l}, w_{i{k_l}}, w_{e{k_l}})\}_{l=1}^\infty$ which uniformly convergences in the norm $C^1$ on each compact set.  
We need to divide two cases. 
One is the case $\Omega_\infty=\mathbb{R}^d$ and the other is the case $\Omega_\infty=\mathbb{R}_+^d$ up to translation and rotation, where $\Omega_\infty$ is the limit of $\Omega_k$. \\
We set $d_k= {\rm dist}(0,\partial\Omega_k)=|\lambda_k|^{1/2}{\rm dist}(x_k,\partial\Omega)$ and $D:=\displaystyle{\liminf_{k\rightarrow\infty}}\ d_{k}$. 

\begin{flushleft}
\textbf{Case(3-i) $D=\infty$}
\end{flushleft}

In this case $\Omega_\infty=\mathbb{R}^d$ (See \cite{Suz}). 
Let cut-off function $\rho\in C_0^\infty(\mathbb{R}^d)$ be such that $\rho(x)\equiv 1$ for $|x|\le 1$ and $\rho(x)\equiv 0$ for $|x|\ge 3/2$. 
We localize functions $w_k$, $w_{ik}$, $w_{ek}$ as follows 
\begin{align*}
\left(
\begin{array}{c}
w_k^\rho\\
w_{ik}^\rho\\
w_{ek}^\rho
\end{array}
\right)
:=\rho
\left(
\begin{array}{c}
w_k\\
w_{ik}\\
w_{ek}
\end{array}
\right)\hspace{5mm}{\rm in}\ \ \Omega_k. 
\end{align*}
By multiplying rescaled resolvent equations by $\rho$, we consider the following localized equations 
\begin{align}
&e^{i\theta_k}w_k^\rho-\nabla\cdot(\sigma_{ik}\nabla w_{ik}^\rho)=t_k\rho+I_{ik}&{\rm in}& \ \Omega_k,\label{localin} \\
&e^{i\theta_k}w_k^\rho+\nabla\cdot(\sigma_{ek}\nabla w_{ek}^\rho)=t_k\rho+I_{ek}&{\rm in}& \ \Omega_k,\label{localex} \\
&w_k^\rho=w_{ik}^\rho-w_{ek}^\rho&{\rm in}& \ \Omega_k,\label{in-ex} \\
&\sigma_{ik}\nabla w_{ik}^\rho\cdot n_k=0,~\sigma_{ek}\nabla w_{ek}^\rho\cdot n_k=0&{\rm on}& \ \partial\Omega_k, \label{BD} 
\end{align}
where 
\begin{align*}
I_{ik}=
-\sum_{1\le m,n\le d}\Big\{\bigl((\sigma_{ik})_{mn}\bigr)_{x_m}\rho_{x_n}w_{ik}+(\sigma_{ik})_{mn}\rho_{x_mx_n}w_{ik}\\
+(\sigma_{ik})_{mn}\rho_{x_n}(w_{ik})_{x_m}+(\sigma_{ik})_{mn}\rho_{x_m}(w_{ik})_{x_n}\Big\}, 
\end{align*}
\begin{align*}
I_{ek}=
\ \sum_{1\le m,n\le d}\Big\{\bigl((\sigma_{ek})_{mn}\bigr)_{x_m}\rho_{x_n}w_{ek}+(\sigma_{ek})_{mn}\rho_{x_mx_n}w_{ek}\\
+(\sigma_{ek})_{mn}\rho_{x_n}(w_{ek})_{x_m}+(\sigma_{ek})_{mn}\rho_{x_m}(w_{ek})_{x_n}\Big\}
\end{align*}
are lower order terms of $w_{ik}$ and $w_{ek}$. 
Here, we take sufficiently large $k$ such that $B(0,2)\subset\Omega_k$. 

Take some $p>n$ and apply $W^{2,p}(\Omega_k)$ a priori estimate for second order elliptic operators $-\nabla\cdot(\sigma_{ik}\nabla\cdot)$, which have the oblique boundary (\ref{BD}). 
By (\ref{localin}) there exists $C>0$ independent of $k\in\mathbb{N}$ such that 
\begin{align*}
&\|w_{ik}^\rho\|_{W^{2,p}(\Omega_k)}\\
\le&C\left(\|w_{ik}^\rho\|_{L^p(\Omega_k)}+\|w_k^\rho\|_{L^p(\Omega_k)}+\|t_k\rho\|_{L^p(\Omega_k)}+\|I_{ik}\|_{L^p(\Omega_k)}\right) \\
\le&C|B(0,2)|^{1/p}\left(\|w_{ik}^\rho\|_{L^\infty(\Omega_k)}+\|w_k^\rho\|_{L^\infty(\Omega_k)}+\|t_k\rho\|_{L^\infty(\Omega_k)}+\|I_{ik}\|_{L^\infty(\Omega_k)}\right)\\
=:&C|B(0,2)|^{1/p}\left(I+I\hspace{-.1em}I+I\hspace{-.1em}I\hspace{-.1em}I+I\hspace{-.1em}V\right), 
\end{align*}
where we use H${\rm \ddot{o}}$lder inequality in the second inequality. 
The first term $I$ is uniformly bounded in $k$ since $w_{ik}(0)=0$ and $\|\nabla w_{ik}\|_{L^\infty(\Omega_k)}\le1$. 
The second term $I\hspace{-.1em}I$ and the third term $I\hspace{-.1em}I\hspace{-.1em}I$ are also uniformly bounded in $k$ since $\|w_k\|_{L^\infty(\Omega_k)}\le1$, $\|\rho\|_{L^\infty(\Omega_k)}\le1$ and $\|t_k\|_{L^\infty(\Omega_k)}<1/k$. 
Finally the forth term $I\hspace{-.1em}V$ is also uniformly bounded in $k$ since 
\begin{align*}
I\hspace{-.1em}V&\le C(d,\sup_k\|\sigma_{ik}\|_{W^{1,\infty}(\Omega_k)})\|w_{ik}\|_{W^{1,\infty}(\Omega_k)}\\
&\le C. 
\end{align*}
Here, the constant $C$ may differ from line to line. 
Therefore the sequence $\{w_{ik}^\rho\}_{k=1}^\infty$ is uniformly bounded in $W^{2,p}(\Omega_k)$. 
Functions $\{w_{ek}^\rho\}_{k=1}^\infty$ and $\{w_{k}^\rho\}_{k=1}^\infty$ are also uniformly bounded in $W^{2,p}(\Omega_k)$ since the same calculation as above and (\ref{in-ex}). 
Here, $\Omega_k$ depends on $k\in\mathbb{N}$. 
By zero extension from $\Omega_k$ to $\mathbb{R}^d$, we have $\{(w_k^\rho, w_{ik}^\rho, w_{ek}^\rho)\}_{k=1}^\infty$ is uniform bounded in the norm $\left(W^{2,p}(\mathbb{R} ^d)\right)^3$. 
Thus we are able to take subsequences $\{(w_{k_l}^\rho, w_{i k_l}^\rho, w_{e k_l}^\rho)\}_{l=1}^\infty$ and $w, w_i, w_e\in W^{2,p}(\mathbb{R}^d)$ such that 
\begin{align*}
\left(
\begin{array}{c}
w_{k_l}^\rho\\
w_{i{k_l}}^\rho\\
w_{e{k_l}}^\rho
\end{array}
\right)
\to
\left(
\begin{array}{c}
w\\
w_i\\
w_e
\end{array}
\right)~{\rm in~the~norm~}C^1(\mathbb{R}^d)~{\rm as}~l\to\infty, 
\end{align*}
by Rellich's compactness theorem \cite{AF}. 
Since 
\begin{align*}
|w_{k_l}(0)|+|\nabla w_{k_l}(0)|+|\nabla w_{i{k_l}}(0)|+|\nabla w_{e{k_l}}(0)|>\frac{1}{2}, 
\end{align*}
we get 
\begin{align*}
|w(0)|+|\nabla w(0)|+|\nabla w_i(0)|+|\nabla w_e(0)|\ge\frac{1}{2}. 
\end{align*}

\begin{flushleft}
\textbf{Case(3-ii) $D<\infty$}
\end{flushleft}

In this case $\Omega_\infty=\mathbb{R}_+^d$ up to translation and rotation (See \cite{Suz}). 
Let cut-off functions $\{\rho_k\}_{k=1}^\infty\subset C_0^\infty(B(0,2))$ satisfy $0\le\rho_k\le1$, $\partial\rho_k/\partial n_k=0$ on $\partial\Omega_k$, $\rho_k\equiv1$ on $B(0,1)$ and there exists $K>0$ such that $\sup_{k\in\mathbb{N}}\|\rho_k\|_{W^{2,\infty}(B(0,2))}\le K$. 
This is of course possible \cite[Appendix B]{AGSS}. 
Take a bounded $C^2$-domain $\Omega_k'$ with the unit outer normal vector $\tilde{n}_k$ such that $\Omega_k'\subset\Omega_k\cap B(0,2)$, $B(0,1)\cap\partial\Omega_k\subset\partial\Omega_k'$ and $\partial\rho_k/\partial\tilde{n}_k=0$ on $\partial\Omega_k'$. 

We argue in the same way as Case(3-i), we localize $(w_k,w_{ik},w_{ek})$ by multiplying $\rho_k$ and get $W^{2,p}(\Omega_k')$ a priori estimate for $(w_k^{\rho_k},w_{ik}^{\rho_k},w_{ek}^{\rho_k})$. 
We consider some neighborhood $U\subset\Omega_k'$ near the origin such that $\partial(U\cap\overline{\mathbb{R}_+^d})$ is smooth. 
Then we can take subsequences $\{(w_{k_l}^{\rho_{k_l}}, w_{i{k_l}}^{\rho_{k_l}}, w_{e{k_l}}^{\rho_{k_l}})\}_{l=1}^\infty$ and $w, w_i, w_e\in W^{2,p}(U\cap\overline{\mathbb{R}_+^d})$ such that 
\begin{align*}
\left(
\begin{array}{c}
w_{k_l}^{\rho_{k_l}}\\
w_{i{k_l}}^{\rho_{k_l}}\\
w_{e{k_l}}^{\rho_{k_l}}
\end{array}
\right)
\to
\left(
\begin{array}{c}
w\\
w_i\\
w_e
\end{array}
\right)~{\rm in~the~norm~}C^1(U\cap\overline{\mathbb{R}^d_+})~{\rm as}~l\to\infty. 
\end{align*}
As in Case (3-i), we get the same inequality. 

In this step, we are able to conclude that $w\not\equiv0$ and $w_{i,e}$ are not constants on some neighborhood near the origin. 

\begin{flushleft}
\textbf{Step 4 (Characterization of the limit)}
\end{flushleft}

Let us explain resolvent equations of $w_{k_l}$, $w_{i{k_l}}$, $w_{e{k_l}}$ tend to the limit equation 
\begin{equation}\label{infty}
\begin{cases}
e^{i\theta_\infty}w-\nabla\cdot(\sigma_{i\infty}\nabla w_i)=0\hspace{5mm}&{\rm in} \ \Omega_\infty, \\
e^{i\theta_\infty}w+\nabla\cdot(\sigma_{e\infty}\nabla w_e)=0\hspace{5mm}&{\rm in} \ \Omega_\infty, \\
w=w_i-w_e\hspace{5mm}&{\rm in} \ \Omega_\infty, \\
\sigma_{i\infty}\nabla w_i\cdot n_\infty=0,~\sigma_{e\infty}\nabla w_e\cdot n_\infty=0\hspace{5mm}&{\rm on} \ \partial\Omega_\infty, 
\end{cases}
\end{equation}
in the weak sense, where $\theta_\infty=\lim_{k\to\infty}\theta_k$, $\sigma_{i\infty},\sigma_{e\infty}$ are constant coefficients matrices defined as below which satisfy uniform ellipticity condition and $n_\infty$ is unit outer normal vector $(0,\cdots,0,-1)$ when $\Omega_\infty=\mathbb{R}_+^d$. 
If $\Omega_\infty=\mathbb{R}^d$, we do not need to consider boundary conditions. 

We have $w, w_i, w_e\in\bigcap_{n<p<\infty}W_{loc}^{2,p}(\overline{\Omega_\infty})\cap W^{1,\infty}(\Omega_\infty)$ and 
\begin{align*}
\left(
\begin{array}{c}
w_{k_l}\\
\nabla w_{i{k_l}}\\
\nabla w_{e{k_l}}
\end{array}
\right)
\to
\left(
\begin{array}{c}
w\\
\nabla w_i\\
\nabla w_e
\end{array}
\right)~{\rm weak\ast~in~}L^\infty(\Omega_\infty)~{\rm as}~l\to\infty
\end{align*}
since $\displaystyle\sup_{x\in\Omega_k}\left(|w_k(x)|+|\nabla w_k(x)|+|\nabla w_{ik}(x)|+|\nabla w_{ek}(x)|\right)=1$. 

\begin{flushleft}
\textbf{Case(4-i)} $\Omega_\infty=\mathbb{R}^d$
\end{flushleft}

\begin{proposition}\label{limit1}
The limit $w, w_i, w_e\in\bigcap_{n<p<\infty}W_{loc}^{2,p}(\mathbb{R}^d)\cap W^{1,\infty}(\mathbb{R}^d)$ satisfy that for any $\phi_{i,e}\in C_0^\infty(\mathbb{R}^d)$ 
\begin{align*}
\begin{cases}
e^{i\theta\infty}(w,\phi_i)_{L^2(\mathbb{R}^d)}+(\sigma_{i\infty}\nabla w_i,\nabla\phi_i)_{L^2(\mathbb{R}^d)}=0, \\
e^{i\theta\infty}(w,\phi_e)_{L^2(\mathbb{R}^d)}-(\sigma_{e\infty}\nabla w_e,\nabla\phi_e)_{L^2(\mathbb{R}^d)}=0, \\
w=w_i-w_e, 
\end{cases}
\end{align*}
where $\theta_\infty=\lim_{k\to\infty}\theta_k$ and $\sigma_{i\infty},\sigma_{e\infty}$ are constant coefficients matrices which satisfy uniform ellipticity condition. 
Here, $(\cdot,\cdot)_{L^2(\mathbb{R}^d)}$ denotes $L^2$-inner product. 
\end{proposition}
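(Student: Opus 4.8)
The plan is to pass to the limit in the variational formulation of the rescaled system, exploiting that under the parabolic rescaling the coefficient matrices flatten out to constant matrices. First I would extract a further subsequence (not relabelled) so that $\theta_k\to\theta_\infty$ for some $\theta_\infty\in[-(\pi-\varepsilon),\pi-\varepsilon]$ and $\sigma_i(x_k)\to\sigma_{i\infty}$, $\sigma_e(x_k)\to\sigma_{e\infty}$ in $\mathbb{S}^d$; this is possible because $\{\theta_k\}$ is bounded and, by (\ref{UE}), $\{\sigma_{i,e}(x_k)\}$ lies in a bounded set of positive-definite matrices. Since $\sigma_{i,e}$ is $C^1$, hence locally uniformly continuous, and $x/|\lambda_k|^{1/2}\to 0$ uniformly for $x$ in any ball $B(0,R)$, one gets $\sigma_{ik}\to\sigma_{i\infty}$ and $\sigma_{ek}\to\sigma_{e\infty}$ uniformly on every $B(0,R)$; in particular the limits are constant matrices satisfying (\ref{UE}). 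The stated regularity $w,w_i,w_e\in\bigcap_{n<p<\infty}W^{2,p}_{loc}(\mathbb{R}^d)\cap W^{1,\infty}(\mathbb{R}^d)$ is already contained in Step 3: localizing the a priori estimate there to an arbitrary ball yields uniform $W^{2,p}(B(0,R))$ bounds, and the normalization $\sup_{\Omega_k}(|w_k|+|\nabla w_k|+|\nabla w_{ik}|+|\nabla w_{ek}|)=1$ is preserved in the limit.

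Next I would write down the variational form of the rescaled equations for finite $k$. Fix $\phi_i\in C_0^\infty(\mathbb{R}^d)$ with $\mathrm{supp}\,\phi_i\subset B(0,R)$. Since $D=\infty$, for all sufficiently large $k$ one has $B(0,R)\subset\Omega_k$, so $\mathrm{supp}\,\phi_i$ is bounded away from $\partial\Omega_k$. Because $w_{ik}\in W^{2,p}_{loc}$, the first rescaled equation holds in $L^p_{loc}(\Omega_k)$; multiplying by $\phi_i$ and integrating by parts over $\mathbb{R}^d$ --- with no boundary contribution, as $\phi_i$ vanishes near $\partial\Omega_k$ --- gives
\[ e^{i\theta_k}(w_k,\phi_i)_{L^2(\mathbb{R}^d)}+(\sigma_{ik}\nabla w_{ik},\nabla\phi_i)_{L^2(\mathbb{R}^d)}=(t_k,\phi_i)_{L^2(\mathbb{R}^d)}, \]
and likewise, for $\phi_e\in C_0^\infty(\mathbb{R}^d)$, the identity $e^{i\theta_k}(w_k,\phi_e)_{L^2}-(\sigma_{ek}\nabla w_{ek},\nabla\phi_e)_{L^2}=(t_k,\phi_e)_{L^2}$ from the second rescaled equation.

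Finally I would pass to the limit along $k_l$ term by term, using the weak-$*$ $L^\infty(\mathbb{R}^d)$ convergences $w_{k_l}\rightharpoonup w$, $\nabla w_{ik_l}\rightharpoonup\nabla w_i$, $\nabla w_{ek_l}\rightharpoonup\nabla w_e$ recorded in Step 4. Pairing against $\phi_i\in L^1$ and using $\theta_{k_l}\to\theta_\infty$ gives $e^{i\theta_{k_l}}(w_{k_l},\phi_i)\to e^{i\theta_\infty}(w,\phi_i)$; since $\sigma_{ik_l}^{T}\nabla\phi_i\to\sigma_{i\infty}^{T}\nabla\phi_i$ strongly in $L^1$ (uniform convergence of the coefficients on $B(0,R)$) while $\nabla w_{ik_l}\rightharpoonup\nabla w_i$ weak-$*$, the bilinear term converges to $(\sigma_{i\infty}\nabla w_i,\nabla\phi_i)$; and $|(t_{k_l},\phi_i)|\le\|t_{k_l}\|_{L^\infty(\Omega_{k_l})}\|\phi_i\|_{L^1}<\|\phi_i\|_{L^1}/k_l\to 0$. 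This yields the first identity; the second follows in the same way, and $w=w_i-w_e$ by letting $l\to\infty$ in $w_{k_l}=w_{ik_l}-w_{ek_l}$ (e.g. in $C^1_{loc}(\mathbb{R}^d)$). The one point that genuinely needs care is the strong, uniform-on-compact-sets convergence of the rescaled coefficients to constant matrices --- this is precisely where the $C^1$ regularity of $\sigma_{i,e}$ and the rescaling factor $|\lambda_k|^{1/2}$ enter --- together with the fact that in the whole-space case no boundary term survives the integration by parts because $B(0,R)$ is eventually contained in $\Omega_k$; the substantially more delicate analogue is the half-space case treated next, where the surviving boundary term over the moving boundary $\partial\Omega_k$ must be handled using the special cut-off functions of Step 3.
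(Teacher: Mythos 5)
Your proposal is correct and follows essentially the same route as the paper: extract subsequences so that $\theta_k\to\theta_\infty$ and $\sigma_{i,e}(x_k)\to\sigma_{i,e\infty}$, use uniform continuity of $\sigma_{i,e}$ under the $|\lambda_k|^{1/2}$ rescaling to get uniform convergence of $\sigma_{i,ek}$ to constant matrices on compact sets, and pass to the limit in the weak formulation using the convergences from Steps 3--4 together with $\|t_{k_l}\|_{L^\infty}<1/k_l\to0$. The only cosmetic difference is that you pair weak-$*$ $L^\infty$ convergence of the gradients against strong $L^1$ convergence of $\sigma_{ik_l}^{T}\nabla\phi_i$, whereas the paper invokes weak $W^{2,p}$ convergence on the (compact) support of the test function; both are valid.
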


\begin{proof}[Proof of Proposition \ref{limit1}]
For each function $\eta\in C_0^\infty(\mathbb{R}^d)$, there exists $k_\eta\in\mathbb{N}$ such that ${\rm supp}\,\eta\subset\Omega_k$ for $k_\eta\le k$. 
Since ${\rm supp}\,\eta$ is compact, there exist $w_{k_l}$, $w_{i{k_l}}$, $w_{e{k_l}}$ such that 
\begin{align*}
\left(
\begin{array}{c}
w_{k_l}\\
w_{i{k_l}}\\
w_{e{k_l}}
\end{array}
\right)
\to
\left(
\begin{array}{c}
w\\
w_i\\
w_e
\end{array}
\right)~{\rm weakly~on~} W^{2,p}({\rm supp}\,\eta)~{\rm as~}l\to\infty
\end{align*}
for all $n<p<\infty$. 
Now we have to determine $\sigma_{i\infty}$ and $\sigma_{e\infty}$. 
For matrix $A=\{a_{mn}\}_{1\le m,n\le d}$, set $\|A\|:=\max_{1\le m,n\le d}|a_{mn}|$. 
Since $\sigma_i$ is uniformly continuous, for each $\varepsilon>0$ there exists $\delta>0$ such that if $\left|x_k+\frac{x}{|\lambda_k|^{1/2}}-x_k\right|=\left|\frac{x}{|\lambda_k|^{1/2}}\right|<\delta$ then $\left\|\sigma_i\left(x_k+\frac{x}{|\lambda_k|^{1/2}}\right)-\sigma_i(x_k)\right\|=\|\sigma_{ik}(x)-\sigma_{ik}(0)\|<\varepsilon$. 
We can take $k_0\in\mathbb{N}$ such that $\left|\frac{x}{|\lambda_k|^{1/2}}\right|<\delta$ for $k_0\le k$ since $x\in{\rm supp}\,\eta$ and $|\lambda_k|\to\infty$. 
Since $\|\sigma_i(x_k)\|\le\sup_{x\in\Omega}\|\sigma_i(x)\|$, there exists a subsequence $\{\sigma_{ik_l}\}_{l=1}^\infty$ and a constant matrix $\sigma_{i\infty}$ such that $\sigma_{ik_l}(0)=\sigma_i(x_{k_l})\to\sigma_{i\infty}~(l\to\infty)$. 
Then for $k_0\le k$ 
\begin{align*}
\|\sigma_{ik_l}(x)-\sigma_{i\infty}\|&\le\|\sigma_{ik_l}(x)-\sigma_{ik_l}(0)\|+\|\sigma_{ik_l}(0)-\sigma_{i\infty}\|\\
&\le\varepsilon+\|\sigma_{ik_l}(0)-\sigma_{i\infty}\|\\
&\to\varepsilon~(l\to\infty). 
\end{align*}
Since $\varepsilon>0$ and $x\in{\rm supp}\,\eta$ are arbitrary, we get $\|\sigma_{ik_l}-\sigma_{i\infty}\|\to0~(l\to\infty)$. 
The above calculation is also valid for $\sigma_{e}$. 
Naturally, $\sigma_{i\infty}$ and $\sigma_{e\infty}$ are positive definite constant matrices. 

We consider the weak formulation of the resolvent equation under oblique boundary condition. 
For any test functions $\phi_{i,e}\in C_0^\infty(\mathbb{R}^d)$, 
\begin{align*}
\begin{cases}
e^{i\theta_{k_l}}(w_{k_l},\phi_i)_{L^2(\Omega_{k_l})}+(\sigma_{ik_l}\nabla w_{ik_l},\nabla\phi_i)_{L^2(\Omega_{k_l})}=(t_{k_l},\phi_i)_{L^2(\Omega_{k_l})}, \\
e^{i\theta_{k_l}}(w_{k_l},\phi_e)_{L^2(\Omega_{k_l})}-(\sigma_{ek_l}\nabla w_{ek_l},\nabla\phi_e)_{L^2(\Omega_{k_l})}=(t_{k_l},\phi_e)_{L^2(\Omega_{k_l})}, \\
w_{k_l}=w_{k_l}-w_{k_l}. 
\end{cases}
\end{align*}
As $l\to\infty$, 
\begin{align*}
\begin{cases}
e^{i\theta\infty}(w,\phi_i)_{L^2(\mathbb{R}^d)}+(\sigma_{i\infty}\nabla w_i,\nabla\phi_i)_{L^2(\mathbb{R}^d)}=0, \\
e^{i\theta\infty}(w,\phi_e)_{L^2(\mathbb{R}^d)}-(\sigma_{e\infty}\nabla w_e,\nabla\phi_e)_{L^2(\mathbb{R}^d)}=0, \\
w=w_i-w_e. 
\end{cases}
\end{align*}
\end{proof}

\begin{flushleft}
\textbf{Case(4-ii)} $\Omega_\infty=\mathbb{R}_+^d$
\end{flushleft}

\begin{proposition}\label{limit2}
The limit $w, w_i, w_e\in\bigcap_{n<p<\infty}W_{loc}^{2,p}(\overline{\mathbb{R}_+^d})\cap W^{1,\infty}(\overline{\mathbb{R}_+^d})$ satisfy that for any $\phi_{i,e}\in C_0^\infty(\mathbb{R}^d)|_{\mathbb{R}_+^d}$ 
\begin{align*}
\begin{cases}
e^{i\theta\infty}(w,\phi_i)_{L^2(\mathbb{R}_+^d)}+(\sigma_{i\infty}\nabla w_i,\nabla\phi_i)_{L^2(\mathbb{R}_+^d)}=0, \\
e^{i\theta\infty}(w,\phi_e)_{L^2(\mathbb{R}_+^d)}-(\sigma_{e\infty}\nabla w_e,\nabla\phi_e)_{L^2(\mathbb{R}_+^d)}=0, \\
w=w_i-w_e, 
\end{cases}
\end{align*}
where $\theta_\infty=\lim_{k\to\infty}\theta_k$ and $\sigma_{i\infty},\sigma_{e\infty}$ are constant coefficients matrices which satisfy $\mathrm{(\ref{UE})}$ and $\mathrm{(\ref{EV})}$. 
\end{proposition}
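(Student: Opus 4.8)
The plan is to follow the proof of Proposition~\ref{limit1} essentially verbatim in the interior, the only genuinely new point being the behaviour at the boundary $\partial\Omega_{k_l}$. First I would pin down the limiting coefficients. Exactly as in the proof of Proposition~\ref{limit1}, the uniform continuity of $\sigma_{i,e}$ together with $|\lambda_k|\to\infty$ gives, after passing to a further subsequence, positive definite constant matrices $\sigma_{i\infty}=\lim_{l}\sigma_i(x_{k_l})$ and $\sigma_{e\infty}=\lim_{l}\sigma_e(x_{k_l})$ with $\sigma_{ik_l}\to\sigma_{i\infty}$ and $\sigma_{ek_l}\to\sigma_{e\infty}$ uniformly on every compact set. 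What has to be added in the present case is that $\sigma_{i\infty},\sigma_{e\infty}$ still satisfy~(\ref{EV}), i.e. that $n_\infty=(0,\dots,0,-1)$ is an eigenvector of both. Since $D=\liminf_k d_k<\infty$, in the original scale the point $x_k$ lies within distance $d_k/|\lambda_k|^{1/2}\to0$ of $\partial\Omega$; at the nearest boundary point the assumption~(\ref{EV}) --- equivalently the conductance tensor structure --- makes the unit normal an eigenvector of $\sigma_{i,e}$. Letting $k\to\infty$ and using the uniform continuity of $\sigma_{i,e}$ together with the $C^2_{loc}$ flattening of the rescaled boundaries $\partial\Omega_k$ to the hyperplane $\partial\Omega_\infty$ (so in particular $n_k\to n_\infty$), one gets $\sigma_{i\infty}n_\infty=\mu_i n_\infty$ and $\sigma_{e\infty}n_\infty=\mu_e n_\infty$ with $\mu_{i,e}>0$, which is~(\ref{EV}).

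Next I would set up the weak formulation on the moving domains. By exhausting $\overline{\mathbb{R}_+^d}$ by neighbourhoods of the type $U$ used in Step~3 and taking a diagonal subsequence, arrange that $w_{k_l}\to w$, $\nabla w_{ik_l}\to\nabla w_i$ and $\nabla w_{ek_l}\to\nabla w_e$ uniformly on every compact subset of $\overline{\mathbb{R}_+^d}$, with $w,w_i,w_e$ in the class stated in the proposition (this uses the uniform $W^{2,p}$ bounds from Step~3 and the normalization $\sup(|w_k|+|\nabla w_k|+|\nabla w_{ik}|+|\nabla w_{ek}|)=1$). Now fix $\phi_{i,e}\in C_0^\infty(\mathbb{R}^d)|_{\mathbb{R}_+^d}$. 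Multiplying the two rescaled resolvent equations of Step~2 by $\phi_i$ and $\phi_e$, integrating over $\Omega_{k_l}$ and integrating by parts --- which is legitimate because $w_{ik_l},w_{ek_l}\in W^{2,p}_{loc}(\overline{\Omega_{k_l}})$ and $\phi_{i,e}$ are smooth up to the boundary --- the boundary integrals $\int_{\partial\Omega_{k_l}}(\sigma_{ik_l}\nabla w_{ik_l}\cdot n_{k_l})\,\phi_i\,dS$ and $\int_{\partial\Omega_{k_l}}(\sigma_{ek_l}\nabla w_{ek_l}\cdot n_{k_l})\,\phi_e\,dS$ vanish on account of~(\ref{BD}). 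This is exactly the place where it is essential that $\phi_{i,e}$ are allowed to be nonzero on $\partial\Omega_{k_l}$, in contrast to the Dirichlet case. One is left with
\[
e^{i\theta_{k_l}}(w_{k_l},\phi_i)_{L^2(\Omega_{k_l})}+(\sigma_{ik_l}\nabla w_{ik_l},\nabla\phi_i)_{L^2(\Omega_{k_l})}=(t_{k_l},\phi_i)_{L^2(\Omega_{k_l})},
\]
and the analogous identity for $\phi_e$ with a minus sign in front of the gradient term, together with $w_{k_l}=w_{ik_l}-w_{ek_l}$.

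Then I would let $l\to\infty$. Passing to one more subsequence, $e^{i\theta_{k_l}}\to e^{i\theta_\infty}$ because $\theta_k\in(-\pi+\varepsilon,\pi-\varepsilon)$ is bounded; the $C^2_{loc}$ flattening of the rescaled boundary gives $\mathbf{1}_{\Omega_{k_l}}\to\mathbf{1}_{\mathbb{R}_+^d}$ in $L^1_{loc}$; $\sigma_{ik_l}\to\sigma_{i\infty}$ and $\sigma_{ek_l}\to\sigma_{e\infty}$ locally uniformly; $w_{k_l}$, $\nabla w_{ik_l}$, $\nabla w_{ek_l}$ converge uniformly on $\mathrm{supp}\,\phi_{i,e}$; and $\|t_{k_l}\|_{L^\infty(\Omega_{k_l})}<1/k_l\to0$. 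Hence each term in the displayed identity converges to the corresponding integral over $\mathbb{R}_+^d$, the right-hand side tends to $0$, and $w=w_i-w_e$ passes to the limit. This gives the three identities of the proposition for an arbitrary $\phi_{i,e}\in C_0^\infty(\mathbb{R}^d)|_{\mathbb{R}_+^d}$.

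The main obstacle is the bookkeeping forced by the fact that the domains of integration $\Omega_{k_l}$ vary with $l$ and converge only in a geometric ($C^2_{loc}$-boundary) sense to the half-space: one must justify the passage to the limit in $\int_{\Omega_{k_l}}(\cdot)$, check carefully that the boundary integral really vanishes (this needs the $W^{2,p}_{loc}(\overline{\Omega_{k_l}})$ regularity, the trace theorem, and~(\ref{BD})), extend the local $C^1$-convergence of Step~3 to all of $\overline{\mathbb{R}_+^d}$ by exhaustion, and verify that $\sigma_{i\infty},\sigma_{e\infty}$ inherit~(\ref{EV}). Away from $\partial\Omega_\infty$ the argument is identical to that of Proposition~\ref{limit1} and presents no new difficulty.
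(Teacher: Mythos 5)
Your proposal is correct and takes essentially the same approach the paper intends: the paper's own proof of Proposition \ref{limit2} consists only of the remark that the calculation is similar to Case (4-i), namely passing to the limit in the weak formulation on $\Omega_{k_l}$. The details you supply beyond that --- the vanishing of the boundary integral via (\ref{BD}), the inheritance of (\ref{EV}) by $\sigma_{i\infty},\sigma_{e\infty}$ from $\mathrm{dist}(x_k,\partial\Omega)\to0$ when $D<\infty$, and the bookkeeping for the varying domains --- are exactly the points the paper leaves implicit (and the (\ref{EV}) property is indeed needed later in Lemma \ref{weak2}).
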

We can prove this proposition by similar calculation to Case (4-i). 

\begin{flushleft}
\textbf{Step 5 (Uniqueness)}
\end{flushleft}

In this last step we prove that limit functions are unique. 
The method is to reduce existence of solution to dual problems and use the fundamental lemma of calculus of variation. 
In order to solve the dual problem we use the Fourier transform. 
In the half space case we extend to the whole space. 
However, we have to pay attention to the boundary condition. 
We overcome the difficulty by using the condition $\mathrm{(\ref{EV})}$. 

\begin{flushleft}
\textbf{Case(5-i)} $\Omega_\infty=\mathbb{R}^d$
\end{flushleft}

\begin{lemma}\label{weak1}
Let $w, w_i, w_e \in \bigcap_{n<p<\infty} W^{2,p}_{loc}(\mathbb{R}^d)\cap W^{1,\infty}(\mathbb{R}^d)$ satisfy 
\begin{align}\label{limiteq1}
\begin{cases}
e^{i\theta\infty}(w,\phi_i)_{L^2(\mathbb{R}^d)}+(\sigma_{i\infty}\nabla w_i,\nabla\phi_i)_{L^2(\mathbb{R}^d)}=0&(\forall\phi_i\in C_0^\infty(\mathbb{R}^d)), \\
e^{i\theta\infty}(w,\phi_e)_{L^2(\mathbb{R}^d)}-(\sigma_{e\infty}\nabla w_e,\nabla\phi_e)_{L^2(\mathbb{R}^d)}=0&(\forall\phi_e\in C_0^\infty(\mathbb{R}^d)), \\
w=w_i-w_e, \\
\end{cases}
\end{align}
then $w=0$ and $w_i=w_e=$constant. 
\end{lemma}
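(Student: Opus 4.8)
The plan is to diagonalise the limit system by the Fourier transform on $\mathbb{R}^d$, which is available because $w,w_i,w_e\in W^{1,\infty}(\mathbb{R}^d)$ are in particular bounded, hence tempered, distributions. The one structural input I will use is that $e^{i\theta_\infty}$ has modulus $1$ and argument $\theta_\infty$ with $|\theta_\infty|\le\pi-\varepsilon<\pi$, so $e^{i\theta_\infty}$ is not a non-positive real number; this is inherited from the fact that the resolvent sector $\Sigma_{\pi-\varepsilon,M}$ avoids the negative real axis. Throughout I abbreviate $a_j(\xi):=\langle\sigma_{j\infty}\xi,\xi\rangle$ for $j\in\{i,e\}$; by {\rm (\ref{UE})} these are real quadratic forms with $\underline{\sigma}|\xi|^2\le a_j(\xi)\le\overline{\sigma}|\xi|^2$, smooth on $\mathbb{R}^d$ and strictly positive for $\xi\neq0$.

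First I would rewrite the weak identities {\rm (\ref{limiteq1})} as
\[ \nabla\cdot(\sigma_{i\infty}\nabla w_i)=e^{i\theta_\infty}w,\qquad \nabla\cdot(\sigma_{e\infty}\nabla w_e)=-e^{i\theta_\infty}w\quad\text{in }\mathcal{D}'(\mathbb{R}^d), \]
together with $w=w_i-w_e$, and then take the Fourier transform (all quantities being tempered), obtaining $-a_i(\xi)\widehat{w_i}=e^{i\theta_\infty}\widehat{w}$ and $a_e(\xi)\widehat{w_e}=e^{i\theta_\infty}\widehat{w}$ in $\mathcal{S}'(\mathbb{R}^d)$. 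On the open set $\mathbb{R}^d\setminus\{0\}$, where $a_i,a_e$ are smooth and positive, division is legitimate, so there $\widehat{w_i}=-e^{i\theta_\infty}a_i(\xi)^{-1}\widehat{w}$ and $\widehat{w_e}=e^{i\theta_\infty}a_e(\xi)^{-1}\widehat{w}$; inserting these into $\widehat{w}=\widehat{w_i}-\widehat{w_e}$ yields
\[ \Bigl(1+e^{i\theta_\infty}\bigl(a_i(\xi)^{-1}+a_e(\xi)^{-1}\bigr)\Bigr)\widehat{w}=0\quad\text{in }\mathcal{D}'(\mathbb{R}^d\setminus\{0\}). \]
The coefficient is smooth and nowhere vanishing on $\mathbb{R}^d\setminus\{0\}$: its vanishing would force $e^{i\theta_\infty}=-\bigl(a_i(\xi)^{-1}+a_e(\xi)^{-1}\bigr)^{-1}$, a negative real number, which is impossible. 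Hence $\widehat{w}=0$ on $\mathbb{R}^d\setminus\{0\}$, and the two transformed identities then give $\widehat{w_i}=\widehat{w_e}=0$ on $\mathbb{R}^d\setminus\{0\}$ as well; that is, $\operatorname{supp}\widehat{w}$, $\operatorname{supp}\widehat{w_i}$ and $\operatorname{supp}\widehat{w_e}$ are all contained in $\{0\}$.

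Next I would use that a tempered distribution supported at a single point is a finite combination of derivatives of $\delta_0$, so $w,w_i,w_e$ are polynomials; since they lie in $W^{1,\infty}(\mathbb{R}^d)$ they are bounded, hence constants, say $w\equiv c$, $w_i\equiv c_i$, $w_e\equiv c_e$ with $c=c_i-c_e$. Feeding this back into $\nabla\cdot(\sigma_{i\infty}\nabla w_i)=e^{i\theta_\infty}w$ gives $0=e^{i\theta_\infty}c$, so $c=0$; therefore $w\equiv0$ and $c_i=c_e$, which is precisely the conclusion $w=0$ and $w_i=w_e=\mathrm{const}$. (Alternatively, once $w\equiv0$ one may simply invoke the Liouville theorem for the constant-coefficient elliptic operators $-\nabla\cdot(\sigma_{j\infty}\nabla\,\cdot\,)$, a linear change of variables reducing it to the boundedness of entire harmonic functions.)

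The part I expect to be delicate is the passage through $\xi=0$: the symbols $a_j(\xi)^{-1}$ are singular there, so the algebraic inversion is legitimate only away from the origin, after which one must separately convert ``$\operatorname{supp}\widehat{w}\subset\{0\}$'' into ``$w$ constant'', which is exactly where the \emph{global} boundedness of $w,w_i,w_e$ (not just temperedness) is indispensable. In the terminology used just before the lemma, this computation is the same as solving the dual resolvent problem: for $g\in C_0^\infty(\mathbb{R}^d)$ the adjoint symbol $e^{-i\theta_\infty}+a_i(\xi)a_e(\xi)/(a_i(\xi)+a_e(\xi))$ is bounded away from zero by the same argument, so the Fourier transform furnishes a rapidly decaying solution of the dual problem, and pairing it with $w$ and integrating by parts forces $(w,g)_{L^2(\mathbb{R}^d)}=0$ for every $g$, whence $w=0$ by the fundamental lemma of the calculus of variations. (The half-space analogue will be harder: one must in addition reflect across $\partial\mathbb{R}^d_+$, and condition {\rm (\ref{EV})} is what makes the reflection compatible with the Neumann condition.)
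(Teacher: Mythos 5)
Your proof is correct, and it takes a genuinely different route from the paper's. The paper argues by duality: for arbitrary $\psi_{i},\psi_e\in C_0^\infty(\mathbb{R}^d)$ with $\int_{\mathbb{R}^d}(\psi_i-\psi_e)\,dx=0$ it constructs, via the Fourier transform, explicit solutions $\phi_{i},\phi_e$ of the adjoint system $e^{i\theta_\infty}(\phi_i+\phi_e)-\nabla\cdot(\sigma_{i\infty}\nabla\phi_i)=\psi_i$, $e^{i\theta_\infty}(\phi_i+\phi_e)-\nabla\cdot(\sigma_{e\infty}\nabla\phi_e)=\psi_e$, asserts $\phi_{i},\phi_e\in\mathcal{S}(\mathbb{R}^d)$, substitutes them as test functions, and deduces $(w_i,\psi_i)_{L^2}-(w_e,\psi_e)_{L^2}=0$, whence the conclusion by the fundamental lemma of the calculus of variations. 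You instead Fourier-transform the solution itself and use that the symbol $1+e^{i\theta_\infty}\bigl(a_i(\xi)^{-1}+a_e(\xi)^{-1}\bigr)$, with $a_i(\xi)=\langle\sigma_{i\infty}\xi,\xi\rangle$ and $a_e(\xi)=\langle\sigma_{e\infty}\xi,\xi\rangle$ --- equivalently $a_ia_e+e^{i\theta_\infty}(a_i+a_e)$, which is exactly the paper's denominator --- cannot vanish for $\xi\neq0$ because $e^{i\theta_\infty}$ is not a negative real; this localizes $\operatorname{supp}\widehat{w}$, $\operatorname{supp}\widehat{w_i}$, $\operatorname{supp}\widehat{w_e}$ at the origin, and the structure theorem for point-supported distributions together with global boundedness finishes the argument. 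Both proofs hinge on the non-vanishing of the same symbol away from $\xi=0$ (a fact both treat as inherited from $|\theta_\infty|\le\pi-\varepsilon<\pi$; you make it explicit, which is a small improvement). What your route buys is that you never divide at $\xi=0$: in the paper's formulas the denominator vanishes to second order there while the compatibility condition $\int(\psi_i-\psi_e)\,dx=0$ only forces first-order vanishing of the numerator, so the claimed membership $\phi_{i},\phi_e\in\mathcal{S}(\mathbb{R}^d)$ and the ensuing integration by parts against $w_{i},w_e\in W^{1,\infty}$ require additional justification that your argument simply does not need. Your closing remarks also correctly anticipate the half-space case: the paper's Lemma following this one reflects evenly across $\partial\mathbb{R}^d_+$ using (\ref{EV}) and then reduces to the whole-space statement, exactly as you describe.
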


\begin{proof}[Proof of Lemma \ref{weak1}]
Equations (\ref{limiteq1}) implies the following equations 
\begin{align*}
\begin{cases}
\left(w_i,e^{i\theta_\infty}\phi_i-\nabla\cdot(\sigma_{i\infty}\nabla\phi_i)\right)_{L^2(\mathbb{R}^d)}-(w_e,e^{i\theta_\infty}\phi_i)_{L^2(\mathbb{R}^d)}=0, \\
(w_i,e^{i\theta_\infty}\phi_e)_{L^2(\mathbb{R}^d)}-\left(w_e,e^{i\theta_\infty}\phi_e-\nabla\cdot(\sigma_{e\infty}\nabla\phi_e)\right)_{L^2(\mathbb{R}^d)}=0, 
\end{cases}
\end{align*}
\begin{align*}
\bigl(w_i,e^{i\theta_\infty}(\phi_i+\phi_e)-&\nabla\cdot(\sigma_{i\infty}\nabla\phi_i) \bigr)_{L^2(\mathbb{R}^d)}\nonumber\\
&-\bigl(w_e,e^{i\theta_\infty}(\phi_i+\phi_e)-\nabla\cdot(\sigma_{e\infty}\nabla\phi_e)\bigr)_{L^2(\mathbb{R}^d)}=0. 
\end{align*}
Since $C_0^\infty(\mathbb{R}^d)$ is dense in $\mathcal{S}(\mathbb{R}^d)$, we can take $\phi_{i,e}$ in $\mathcal{S}(\mathbb{R}^d)$ as test functions. 
So we consider the dual problem of the limit equation. 
For all $\psi_{i,e}\in C_0^\infty(\mathbb{R}^d)$ satisfying $\int_{\mathbb{R}^d}(\psi_i-\psi_e)dx=0$, we would like to find solutions $\phi_{i,e}\in\mathcal{S}(\mathbb{R}^d)$ such that 
\begin{align*}
&e^{i\theta_\infty}(\phi_i+\phi_e)-\nabla\cdot(\sigma_{i\infty}\nabla\phi_i)=\psi_i&{\rm in}&~\mathbb{R}^d, \\
&e^{i\theta_\infty}(\phi_i+\phi_e)-\nabla\cdot(\sigma_{e\infty}\nabla\phi_e)=\psi_e&{\rm in}&~\mathbb{R}^d. 
\end{align*}
We are able to solve these equations by the Fourier transform. 
Solutions $\phi_{i,e}\in\mathcal{S}(\mathbb{R}^d)$ are of the form 
\begin{align*}
\phi_i=\mathcal{F}^{-1}\left(\frac{\left(\langle\sigma_{e\infty}\xi,\xi\rangle+e^{i\theta_\infty}\right)\mathcal{F}\psi_i-e^{i\theta_\infty}\mathcal{F}\psi_e}{\langle\sigma_{i\infty}\xi,\xi\rangle\langle\sigma_{e\infty}\xi,\xi\rangle+e^{i\theta_\infty}\left(\langle\sigma_{i\infty}\xi,\xi\rangle+\langle\sigma_{e\infty}\xi,\xi\rangle\right)}\right), \\
\phi_e=\mathcal{F}^{-1}\left(\frac{\left(\langle\sigma_{i\infty}\xi,\xi\rangle+e^{i\theta_\infty}\right)\mathcal{F}\psi_e-e^{i\theta_\infty}\mathcal{F}\psi_i}{\langle\sigma_{i\infty}\xi,\xi\rangle\langle\sigma_{e\infty}\xi,\xi\rangle+e^{i\theta_\infty}\left(\langle\sigma_{i\infty}\xi,\xi\rangle+\langle\sigma_{e\infty}\xi,\xi\rangle\right)}\right), 
\end{align*}
where $\mathcal{F}$ and $\mathcal{F}^{-1}$ denote the Fourier transform and its inverse. 
Therefore, we have for all $\psi_{i,e}\in C_0^\infty(\mathbb{R}^d)$ satisfying $\int_{\mathbb{R}^d}(\psi_i-\psi_e)dx=0$, 
\begin{align*}
(w_i,\psi_i)_{L^2(\mathbb{R}^d)}-(w_e,\psi_e)_{L^2(\mathbb{R}^d)}=0. 
\end{align*}
Let $\psi_i=\psi_e$ then $(w,\psi_i)_{L^2(\mathbb{R}^d)}=0$  for all $\psi_i\in C_0^\infty(\mathbb{R}^d)$. 
By fundamental lemma of calculus of variations, we get $w\equiv0$. 
Let $\psi_e\equiv0$ then $(w_i,\psi_i)_{L^2(\mathbb{R}^d)}=0$ for all $\psi_i\in C_0^\infty(\mathbb{R}^d)$ satisfying $\int_{\mathbb{R}^d}\psi_idx=0$. 
This means $w_i\equiv{\rm constant}$. 
Obviously $w_e=w_i$ since $w=w_i-w_e$. 
\end{proof}

\begin{lemma}\label{weak2}
Let $w, w_i, w_e \in \bigcap_{n<p<\infty} W^{2,p}_{loc}(\mathbb{R}_+^d)\cap W^{1,\infty}(\mathbb{R}_+^d)$ satisfy 
\begin{align}\label{limiteq2}
\begin{cases}
e^{i\theta\infty}(w,\phi_i)_{L^2(\mathbb{R}_+^d)}+(\sigma_{i\infty}\nabla w_i,\nabla\phi_i)_{L^2(\mathbb{R}_+^d)}=0&(\forall\phi_i\in C_0^\infty(\mathbb{R}^d)|_{\mathbb{R}_+^d}), \\
e^{i\theta\infty}(w,\phi_e)_{L^2(\mathbb{R}^d)}-(\sigma_{e\infty}\nabla w_e,\nabla\phi_e)_{L^2(\mathbb{R}_+^d)}=0&(\forall\phi_e\in C_0^\infty(\mathbb{R}^d)|_{\mathbb{R}_+^d}), \\
w=w_i-w_e, \\
\end{cases}
\end{align}
then $w=0$ and $w_i=w_e=$constant. 
\end{lemma}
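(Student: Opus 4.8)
The plan is to reduce the half-space assertion to the whole-space Lemma \ref{weak1} by reflecting $w,w_i,w_e$ evenly across the boundary hyperplane $\{x_d=0\}$. The key observation is that the weak formulation (\ref{limiteq2}), being tested against functions $\phi_{i,e}\in C_0^\infty(\mathbb{R}^d)|_{\mathbb{R}_+^d}$ that need \emph{not} vanish on $\{x_d=0\}$, encodes both the interior equations and the conormal conditions $\sigma_{i\infty}\nabla w_i\cdot n_\infty=\sigma_{e\infty}\nabla w_e\cdot n_\infty=0$. Since by hypothesis $\sigma_{i\infty},\sigma_{e\infty}$ satisfy (\ref{EV}), the vector $n_\infty=(0,\dots,0,-1)$ is an eigenvector of each matrix; hence each $\sigma_{\bullet\infty}$ is block diagonal with vanishing $d$-th row and column off the $(d,d)$ entry, the operator $\nabla\cdot(\sigma_{\bullet\infty}\nabla\,\cdot\,)$ has no mixed normal-tangential second derivatives, and the conormal conditions reduce to the plain Neumann conditions $\partial_{x_d}w_i=\partial_{x_d}w_e=0$ (hence also $\partial_{x_d}w=0$) on $\{x_d=0\}$ in the sense of traces.

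First I would carry out this extraction: testing (\ref{limiteq2}) against $\phi_{i,e}$ with compact support in the open half-space and integrating by parts once (legitimate since $w_i,w_e\in\bigcap_{n<p<\infty}W^{2,p}_{loc}(\mathbb{R}_+^d)$) yields $e^{i\theta_\infty}w=\nabla\cdot(\sigma_{i\infty}\nabla w_i)=-\nabla\cdot(\sigma_{e\infty}\nabla w_e)$ a.e.\ in $\mathbb{R}_+^d$; allowing general $\phi_{i,e}$ and using Green's formula then forces the conormal traces to vanish, which by (\ref{EV}) is the Neumann condition above.

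Next I would set $\tilde u(x',x_d):=u(x',|x_d|)$ for $u\in\{w,w_i,w_e\}$. Since $u\in W^{1,\infty}(\mathbb{R}_+^d)$ the reflection is Lipschitz, so $\tilde u\in W^{1,\infty}(\mathbb{R}^d)$; and since the trace of $\partial_{x_d}u$ vanishes on $\{x_d=0\}$, the standard reflection lemma gives $\tilde u\in\bigcap_{n<p<\infty}W^{2,p}_{loc}(\mathbb{R}^d)$ — the tangential second derivatives reflect evenly and the normal/mixed ones reflect oddly with zero trace, so no singular part is created on the hyperplane. I would then check that $(\tilde w,\tilde w_i,\tilde w_e)$ satisfies the whole-space weak formulation (\ref{limiteq1}) with the same constant matrices: for $\phi\in C_0^\infty(\mathbb{R}^d)$, split each integral in (\ref{limiteq1}) into its pieces over $\{x_d>0\}$ and $\{x_d<0\}$ and substitute $x_d\mapsto-x_d$ in the latter; using $\tilde w(x',-x_d)=w(x',x_d)$, $\partial_{x_d}\tilde w_i(x',-x_d)=-\partial_{x_d}w_i(x',x_d)$ and the block-diagonal form of $\sigma_{\bullet\infty}$ (so that the two sign changes cancel in the normal term and are absent in the tangential one), each pairing collapses to twice the corresponding integral over $\mathbb{R}_+^d$ tested against the even part $\phi^+(x',x_d):=\frac{1}{2}(\phi(x',x_d)+\phi(x',-x_d))$. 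Since $\phi^+|_{\mathbb{R}_+^d}\in C_0^\infty(\mathbb{R}^d)|_{\mathbb{R}_+^d}$, this is exactly (\ref{limiteq2}) evaluated at $\phi^+$, hence zero. Lemma \ref{weak1} applied to $(\tilde w,\tilde w_i,\tilde w_e)$ then gives $\tilde w\equiv0$ and $\tilde w_i\equiv\tilde w_e\equiv\text{const}$ on $\mathbb{R}^d$; restricting back to $\mathbb{R}_+^d$ gives $w=0$ and $w_i=w_e=\text{const}$.

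The main obstacle is making the reflection legitimate at the level of $W^{2,p}$: the even extension of a $W^{2,p}$ function generally carries a distributional mass in its second derivatives concentrated on the reflection hyperplane, and one also needs the reflected operator to have no mixed normal-tangential terms; both are resolved precisely by the Neumann condition hidden in (\ref{limiteq2}) together with the eigenvector property (\ref{EV}) of the limit matrices $\sigma_{i\infty},\sigma_{e\infty}$. Everything else is a routine change of variables. (Alternatively one could take the partial Fourier transform in $x'$ and solve the resulting second-order ordinary differential equation in $x_d$ with the Neumann condition at $x_d=0$ directly, but the reflection argument has the advantage of reusing Lemma \ref{weak1} verbatim.)
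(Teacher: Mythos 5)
Your argument is correct and is essentially the paper's own proof: both rest on the even reflection across $\{x_d=0\}$, the block-diagonal form of $\sigma_{i\infty},\sigma_{e\infty}$ forced by (\ref{EV}) (so the reflected operator has no mixed normal-tangential terms and the Neumann trace condition makes the extension $W^{2,p}_{loc}$ across the hyperplane), and reduction to the whole-space Lemma \ref{weak1}. The only cosmetic difference is that you split the gradient-form integrals over the two half-spaces and fold onto the even part of the test function, whereas the paper performs the even/odd decomposition on the test functions in the twice-integrated-by-parts (dual) identity --- the same computation read from the other end.
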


\begin{proof}[Proof of lemma \ref{weak2}]
Equations (\ref{limiteq2}) implies the following equations; 
\begin{align*}
\begin{cases}
\left(w_i,e^{i\theta_\infty}\phi_i-\nabla\cdot(\sigma_{i\infty}\nabla\phi_i)\right)_{L^2(\mathbb{R}_+^d)}-(w_e,e^{i\theta_\infty}\phi_i)_{L^2(\mathbb{R}_+^d)}=0\\
\hspace{45mm}(\forall\phi_i\in C_0^\infty(\mathbb{R}^d)|_{\mathbb{R}_+^d}~{\rm s.t.}~\sigma_{i\infty}\nabla\phi_i\cdot n_\infty=0), \\
(w_i,e^{i\theta_\infty}\phi_e)_{L^2(\mathbb{R}_+^d)}-\left(w_e,e^{i\theta_\infty}\phi_e-\nabla\cdot(\sigma_{e\infty}\nabla\phi_e)\right)_{L^2(\mathbb{R}_+^d)}=0\\
\hspace{45mm}(\forall\phi_e\in C_0^\infty(\mathbb{R}^d)|_{\mathbb{R}_+^d}~{\rm s.t.}~\sigma_{e\infty}\nabla\phi_e\cdot n_\infty=0), 
\end{cases}
\end{align*}
\begin{align*}
\bigl(w_i,e^{i\theta_\infty}(\phi_i+\phi_e)-&\nabla\cdot(\sigma_{i\infty}\nabla\phi_i)\bigr)_{L^2(\mathbb{R}_+^d)}\\
&-\bigl(w_e,e^{i\theta_\infty}(\phi_i+\phi_e)-\nabla\cdot(\sigma_{e\infty}\nabla\phi_e)\bigr)_{L^2(\mathbb{R}_+^d)}=0. 
\end{align*}
The problem can be reduced to the whole space. 
Let $Ew_{i,e}$ be an even extension to the whole space $\mathbb{R}^d$, i.e. 
\begin{align*}
Ew_{i,e}(x):=
\begin{cases}
w_{i,e}(x',x_d)~&(x_d\ge0)\\
w_{i,e}(x',-x_d)~&(x_d<0). 
\end{cases}
\end{align*}
Matrices $\sigma_{i\infty}$ and $\sigma_{e\infty}$ are constant so we extend these to whole space $\mathbb{R}^d$, which we simply write by $\sigma_{i\infty}$ and $\sigma_{e\infty}$. 
Since $\sigma_{i\infty}\nabla w_i\cdot n_\infty=\nabla w_i\cdot n_\infty=0$, $\sigma_{e\infty}\nabla w_e\cdot n_\infty=\nabla w_e\cdot n_\infty=0$ and $w_{i,e}\in\bigcap_{n<p<\infty}W_{loc}^{2,p}(\overline{\mathbb{R}_+^d})\cap W^{1,\infty}(\mathbb{R}_+^d)$, we have $Ew_{i,e}\in\bigcap_{n<p<\infty}W_{loc}^{2,p}(\mathbb{R}^d)\cap W^{1,\infty}(\mathbb{R}^d)$.  
For arbitrary $\varphi_{i,e}\in C_0^\infty(\mathbb{R}^d)$, let $\varphi_{i,e}^{{\rm even}}$ and $\varphi_{i,e}^{{\rm odd}}$ be the even and odd parts of $\varphi_{i,e}$, i.e. 
\begin{align*}
\varphi_{i,e}^{{\rm even}}(x)&:=\frac{\varphi_{i,e}(x',x_d)+\varphi_{i,e}(x',-x_d)}{2}, \\
\varphi_{i,e}^{{\rm odd}}(x)&:=\frac{\varphi_{i,e}(x',x_d)-\varphi_{i,e}(x',-x_d)}{2}. 
\end{align*}
For simplicity, set a linear operator $L_i\cdot:=e^{i\theta_\infty}\cdot-\nabla\cdot(\sigma_{i\infty}\nabla\cdot)$. 
From the assumption of $\sigma_i$, note $\sigma_{i\infty}$ have the form of 
$\sigma_{i\infty}=
\left(\begin{array}{cc}
\tilde{\sigma}_{i\infty}&0\\
0&\tau_i
\end{array}\right)$
for some constant $(d-1)\times(d-1)$ matrix $\tilde{\sigma}_{i\infty}$ and $\tau_i>0$ because $(0,\cdots,0,-1)$ is eigenvector of $\sigma_{i\infty}$. 
So we have that $L_i\varphi_i^{\rm even}$ is even function and $L_i\varphi_e^{\rm odd}$ is odd function. 
Consider $L_e$ same as $L_i$. 
Naturally, $L_e$ also has the same property. 
Then we have 
\begin{align*}
&(Ew_i,e^{i\theta_\infty}\varphi_e+L_i \varphi_i)_{L^2(\mathbb{R}^d)}-(Ew_e,e^{i\theta_\infty}\varphi_i+L_e\varphi_e)_{L^2(\mathbb{R}^d)}\\
=&\left(Ew_i,e^{i\theta_\infty}(\varphi_e^{{\rm even}}+\varphi_e^{{\rm odd}})+L_i(\varphi_i^{{\rm even}}+\varphi_i^{{\rm odd}})\right)_{L^2(\mathbb{R}^d)}\\
&-\left(Ew_e,e^{i\theta_\infty}(\varphi_i^{{\rm even}}+\varphi_i^{{\rm odd}})+L_e(\varphi_e^{{\rm even}}+\varphi_e^{{\rm odd}})\right)_{L^2(\mathbb{R}^d)}\\
=&(Ew_i,e^{i\theta_\infty}\varphi_e^{{\rm even}}+L_i \varphi_i^{{\rm even}})_{L^2(\mathbb{R}^d)}-(Ew_e,e^{i\theta_\infty}\varphi_i^{{\rm even}}+L_e\varphi_e^{{\rm even}})_{L^2(\mathbb{R}^d)}\\
=&2\left\{(w_i,e^{i\theta_\infty}\varphi_e^{{\rm even}}+L_i \varphi_i^{{\rm even}})_{L^2(\mathbb{R}_+^d)}-(w_e,e^{i\theta_\infty}\varphi_i^{{\rm even}}+L_e\varphi_e^{{\rm even}})_{L^2(\mathbb{R}_+^d)}\right\}. 
\end{align*}
The function $\varphi_i^{{\rm even}}$ satisfies $\sigma_{i\infty}\nabla\varphi_i^{{\rm even}}\cdot n_\infty=\nabla\varphi_i^{{\rm even}}\cdot n_\infty=0$. 
Function $\varphi_e^{\rm even}$ also satisfies same boundary condition. 
Since the last term of above calculation equals to zero, we conclude that for any $\varphi_{i,e}\in C_0^\infty(\mathbb{R}^d)$ 
\begin{align*}
(Ew_i,e^{i\theta_\infty}\varphi_e+L_i \varphi_i)_{L^2(\mathbb{R}^d)}-(Ew_e,e^{i\theta_\infty}\varphi_i+L_e\varphi_e)_{L^2(\mathbb{R}^d)}=0. 
\end{align*}
This means $Ew_i=Ew_e={\rm constant}$ by the Case(4-i). 
 Therefore we have $w=0$ and $w_i=w_e={\rm constant}$. 
\end{proof}

Results of Step 3 and Step 5 are contradictory, so the proof of Theorem \ref{main result} is now complete. 
\end{proof}

\section{Bidomain operators}

\subsection{Definition of bidomain operators in $L^p$ spaces}

In this subsection we define bidomain operators in $L^p$ spaces for $1<p<\infty$. 
To avoid technical difficulties we assume that $\Omega$ is a bounded $C^2$-domain. 
We reformulate resolvent equations corresponding to the parabolic and elliptic system as are derived in \cite{BCP}. 
The new system contains only $u$ and $u_e$ as unknown functions. 
Since $u_i=u+u_e$ by (\ref{action potential}), the new system is of the form: 
\begin{align}
&\lambda u-\nabla\cdot(\sigma_i\nabla u)-\nabla\cdot(\sigma_i\nabla u_e)=s & \mathrm{in}&~\Omega,\label{parabolic} \\
&-\nabla\cdot(\sigma_i\nabla u+(\sigma_i+\sigma_e)\nabla u_e)=0 & \mathrm{in}&~\Omega,\label{elliptic} \\
&\sigma_i\nabla u\cdot n+\sigma_i\nabla u_e\cdot n=0 & \mathrm{on}&~\partial\Omega, \\
&\sigma_i\nabla u\cdot n+(\sigma_i+\sigma_e)\nabla u_e\cdot n=0 & \mathrm{on}&~\partial\Omega. \label{boundary condition} 
\end{align}

Let $1<p<\infty$ and $\Omega$ be a bounded $C^2$-domain. 
Set $L^p_{av}(\Omega):=\{u\in L^p(\Omega)\mid\int_\Omega u dx=0\}$ and the operator $P_{av}$ defined by $P_{av}u:=u-\frac{1}{|\Omega|}\int_\Omega u dx$, which is the orthogonal projection. 
Evidently, $L^p_{av}(\Omega)$ is a closed subspace in $L^p(\Omega)$ and $P_{av}$ is a bounded linear operator on $L^p(\Omega)$. 
We similarly define a function space $W^{2,p}_{av}(\Omega)$, i.e. $W^{2,p}_{av}(\Omega)=W^{2,p}(\Omega)\cap L^p_{av}(\Omega)$.  
We define an operator $A_{i,e}$ in $L^p_{av}(\Omega)$ with the domain $D(A_{i,e})$ corresponding to a uniformly elliptic operator $-\nabla\cdot(\sigma_{i,e}\nabla\cdot)$ with the oblique boundary condition. 
It is explicitly defined as 
\begin{align*}
&u\in D(A_{i,e}):=\left\{u\in W^{2,p}_{av}(\Omega)\mid\sigma_{i,e}\nabla u\cdot n=0 {\rm~a.e.~in~}\partial\Omega\right\}\subset L^p_{av}(\Omega), \\
&A_{i,e}u:=-\nabla\cdot(\sigma_{i,e}\nabla u). 
\end{align*}

\begin{lemma}[\cite{sim}]\label{inverse op}
Let $1<p<\infty$ and let $\Omega$ be a bounded $C^2$-domain. 
Assume that $\sigma_{i,e}\in C^1(\overline{\Omega})$ satisfies {\rm(\ref{UE})}. 
Then  the operator $A_i$ is densely defined closed linear operator on $L^p_{av}(\Omega)$ and for any $f\in L^p_{av}(\Omega)$ there uniquely exists $u\in D(A_i)$ such that $A_i u=f$. 
The operator $A_e$ also has the same property. 
\end{lemma}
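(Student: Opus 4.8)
The plan is to prove the claim for $A_i$; the argument for $A_e$ is identical with $\sigma_e$ in place of $\sigma_i$. I would proceed in four stages: density of $D(A_i)$, closedness of $A_i$, injectivity of $A_i$ on $L^p_{av}(\Omega)$, and surjectivity by a Fredholm argument.

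For \textbf{density}, note that $C_c^\infty(\Omega)\subset W^{2,p}(\Omega)$ and that every $\phi\in C_c^\infty(\Omega)$ trivially satisfies $\sigma_i\nabla\phi\cdot n=0$ on $\partial\Omega$; subtracting the mean affects neither property, so $P_{av}\phi\in D(A_i)$. Since $C_c^\infty(\Omega)$ is dense in $L^p(\Omega)$ and $P_{av}$ is a bounded projection of $L^p(\Omega)$ onto $L^p_{av}(\Omega)$, it follows that $P_{av}\bigl(C_c^\infty(\Omega)\bigr)\subset D(A_i)$ is dense in $L^p_{av}(\Omega)$. For \textbf{closedness}, I would invoke the classical $L^p$ a priori estimate for the conormal (oblique) derivative problem for the uniformly elliptic divergence-form operator $-\nabla\cdot(\sigma_i\nabla\,\cdot\,)$ with $C^1$ coefficients on the bounded $C^2$-domain $\Omega$ (see, e.g., \cite{GT}), namely $\|u\|_{W^{2,p}(\Omega)}\le C\bigl(\|A_iu\|_{L^p(\Omega)}+\|u\|_{L^p(\Omega)}\bigr)$ for all $u\in D(A_i)$. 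If $u_k\in D(A_i)$ with $u_k\to u$ and $A_iu_k\to f$ in $L^p_{av}(\Omega)$, this estimate forces $\{u_k\}$ to be Cauchy in $W^{2,p}(\Omega)$, and then the trace theorem and the continuity of $P_{av}$ identify the $W^{2,p}$-limit $u$ as an element of $D(A_i)$ with $A_iu=f$; hence $A_i$ is closed.

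For \textbf{injectivity}, let $u\in D(A_i)$ with $A_iu=0$. Standard elliptic regularity (rewriting the equation in nondivergence form, whose lower-order terms contain only first derivatives of $u$, and bootstrapping Sobolev embeddings) places $u$ in $H^1(\Omega)$ --- indeed in $C^{1,\alpha}(\overline\Omega)$ --- and then pairing the equation with $\bar u$, integrating by parts, and using $\sigma_i\nabla u\cdot n=0$ on $\partial\Omega$ gives $\int_\Omega\langle\sigma_i\nabla u,\nabla\bar u\rangle\,dx=0$; the ellipticity condition (\ref{UE}) forces $\nabla u\equiv0$, so $u$ is constant, and the mean-zero constraint yields $u=0$. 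For \textbf{surjectivity}, observe that $A_iv\in L^p_{av}(\Omega)$ for every $v\in D(A_i)$, by the divergence theorem and the boundary condition, and that for large $\lambda>0$ the operator $\lambda+A_i$ is invertible on $L^p(\Omega)$ by the standard resolvent theory for elliptic operators under conormal boundary conditions; restricting to $L^p_{av}(\Omega)$ then gives an isomorphism $\lambda+A_i\colon D(A_i)\to L^p_{av}(\Omega)$ whose inverse is compact on $L^p_{av}(\Omega)$ by the Rellich--Kondrachov theorem. The factorization $A_i=\bigl(I-\lambda(\lambda+A_i)^{-1}\bigr)(\lambda+A_i)$ then exhibits $A_i\colon D(A_i)\to L^p_{av}(\Omega)$ as a compact perturbation of an isomorphism, hence a Fredholm operator of index zero; combined with the injectivity just proved, this gives bijectivity, which is the assertion.

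The stage demanding the most care is surjectivity, i.e. solvability of $A_iu=f$ with no added spectral parameter: it is precisely here that the compatibility condition $\int_\Omega f\,dx=0$ and the one-dimensional kernel of constants enter, and it is what makes passing to $L^p_{av}(\Omega)$ indispensable. An alternative to the Fredholm route is to quote directly the classical solvability of the conormal derivative problem (a $W^{2,p}$ solution exists under the compatibility condition and is unique up to additive constants) and then carry out the same mean-zero bookkeeping; in either case the elliptic-theory inputs are routine but must be cited for the oblique/conormal boundary condition rather than for the Dirichlet one.
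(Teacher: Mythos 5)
Your proof is correct, but it is worth noting that the paper does not prove this lemma at all: it is quoted wholesale from Simader--Sohr \cite{sim}, whose approach to the Neumann problem in $L^q$ is a variational/duality one built on a functional-analytic inf--sup framework. Your route is the other standard one: the Agmon--Douglis--Nirenberg $W^{2,p}$ a priori estimate for the conormal problem gives closedness, integration by parts gives the trivial kernel on the mean-zero subspace, and the factorization $A_i=\bigl(I-\lambda(\lambda+A_i)^{-1}\bigr)(\lambda+A_i)$ together with Rellich compactness gives index zero and hence surjectivity. All four stages hold up; in particular you correctly observed the two points where the argument could silently fail, namely that $(\lambda+A_i)^{-1}$ really does preserve $L^p_{av}(\Omega)$ (because $\int_\Omega A_i v\,dx=0$ by the boundary condition), and that for $p<2$ the pairing $\int_\Omega\langle\sigma_i\nabla u,\nabla\bar u\rangle\,dx$ must first be justified by bootstrapping $u$ up to $C^{1,\alpha}(\overline\Omega)$. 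The only quibble is bibliographic: the $W^{2,p}$ estimate for the oblique/conormal problem is not really in \cite{GT} (which does the $L^p$ theory for Dirichlet and the Schauder theory for oblique conditions); you should cite Agmon--Douglis--Nirenberg or Tanabe \cite{Tan1} instead. What the citation to \cite{sim} buys the paper is brevity and lower regularity hypotheses; what your argument buys is a self-contained proof using only the elliptic machinery the paper already invokes elsewhere (Step 3 of the blow-up argument uses the same a priori estimate).
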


If we assume that $\sigma_{i,e}\nabla u\cdot n=0$ is equivalent to $\nabla u\cdot n=0$, then $D(A_i)=\left\{u\in W^{2,p}_{av}(\Omega)\mid\nabla u\cdot n=0 {\rm~a.e.~in~}\partial\Omega\right\}=D(A_e)$. 
So we are able to define the operator $A_i+A_e$ with the domain $D(A_i)(=D(A_e))$ and we observe that inverse operator $(A_i+A_e)^{-1}$ on $L^p_{av}$ is a bounded linear operator. 
Under $\int_\Omega u_e dx=0$, which is often used assumption to study bidomain equations, from (\ref{elliptic}), 
\begin{align*}
&A_iP_{av}u+(A_i+A_e)u_e=0\\
\Leftrightarrow&(A_i+A_e)u_e=-A_iP_{av}u\hspace{5mm}(\in L^p_{av}(\Omega))\\
\Leftrightarrow&u_e=-(A_i+A_e)^{-1}A_i P_{av}u\hspace{1mm}(\in D(A_i)). 
\end{align*}
We substitute this into (\ref{parabolic}) to set 
\begin{flalign*}
&\lambda u+A_i P_{av}u-A_i(A_i+A_e)^{-1}A_i P_{av}u=s\\
\Leftrightarrow&\lambda u+A_i(A_i+A_e)^{-1}A_e P_{av}u=s. 
\end{flalign*}
We are ready to define bidomain operators $A$. 

\begin{definition}[{\cite[Definition 12($p=2$)]{BCP}}]\label{bidomain op}
For $1<p<\infty$, we define the bidomain operator $A: D(A):=\{u\in W^{2,p}(\Omega)\mid\nabla u\cdot n=0 {\rm~a.e.~in~}\partial\Omega\}\subset L^p(\Omega)\to L^p(\Omega)$ by 
\begin{align}
A=A_i(A_i+A_e)^{-1}A_eP_{av}. 
\end{align}
\end{definition}

Under $\int_\Omega u_e dx=0$, equations (\ref{parabolic})-(\ref{boundary condition}) for the function $u$ can be written in a single resolvent equation of the form 
\begin{align}
(\lambda+A)u=s\hspace{5mm}{\rm in}~\Omega\label{operator eq}. 
\end{align}
Once we solve this equation, we are able to derive $u_e=-(A_i+A_e)^{-1}A_iP_{av}u$. 

\subsection{Resolvent set of bidomain operators}

We study existence and uniqueness of the solution for bidomain equations ($\ref{operator eq}$). 
We derive $W^{2,p}$ a priori estimate for fixed $\lambda$ by $W^{2,p}$ a priori estimate for the usual elliptic operator $A_e$. 
To define the bidomain operator $A$, we now assume that $\Omega$ is a bounded $C^2$-domain and $\sigma_{i,e}\in C^1(\overline{\Omega})$ satisfy {\rm(\ref{UE})} and {\rm(\ref{EV})}, which will be used throughout. 

\begin{theorem}[A priori estimate for bidomain operators]\label{a priori}
Let $1<p<\infty$. 
For each $\lambda\in\Sigma_{\pi,0}$ there exists $C_\lambda>0$ such that 
\begin{align*}
\|u\|_{W^{2,p}(\Omega)}\le C_\lambda\left(\|(\lambda+A)u\|_{L^p(\Omega)}+\|u\|_{L^p(\Omega)}\right) 
\end{align*}
for all $u\in D(A)$. 
\end{theorem}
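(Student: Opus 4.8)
The plan is to reduce the estimate for the bidomain operator $A$ to the classical $W^{2,p}$ a priori estimate for the single elliptic operator $A_e$ (or $A_i$) with the oblique boundary condition, using the factorization $A = A_i(A_i+A_e)^{-1}A_e P_{av}$ together with the boundedness of $(A_i+A_e)^{-1}$ on $L^p_{av}(\Omega)$ from Lemma \ref{inverse op}. First I would fix $\lambda \in \Sigma_{\pi,0}$ and $u \in D(A)$, set $s := (\lambda+A)u \in L^p(\Omega)$, and recover the associated extracellular potential $u_e := -(A_i+A_e)^{-1}A_i P_{av}u \in D(A_e)$, so that the pair $(u,u_e)$ solves the elliptic system \eqref{parabolic}--\eqref{boundary condition}. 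The key observation is that \eqref{elliptic} together with its boundary condition \eqref{boundary condition} says precisely that $A_e u_e = -\nabla\cdot(\sigma_i \nabla(P_{av}u + u_e))$, i.e. $u_e$ solves a Neumann elliptic problem whose right-hand side involves only first and second derivatives of $u$; then \eqref{parabolic} reads $A_e u = -\nabla\cdot(\sigma_i\nabla u) = s - \lambda u + \nabla\cdot(\sigma_i \nabla u_e)$, again with the homogeneous oblique boundary condition $\sigma_i\nabla u\cdot n = -\sigma_i \nabla u_e\cdot n$ on $\partial\Omega$, which under \eqref{EV} is the homogeneous Neumann condition after absorbing the $u_e$-term.

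Next I would invoke the standard $W^{2,p}$ elliptic estimate (Agmon--Douglis--Nirenberg / the estimate underlying Lemma \ref{inverse op}) for $-\nabla\cdot(\sigma_e\nabla\cdot)$ with oblique boundary data to get
\begin{align*}
\|u_e\|_{W^{2,p}(\Omega)} \le C\left(\|\nabla\cdot(\sigma_i\nabla(P_{av}u+u_e))\|_{L^p(\Omega)} + \|u_e\|_{L^p(\Omega)}\right),
\end{align*}
but since the right-hand side still contains $D^2 u_e$ this is not yet closed; the clean way around is to use instead the bijectivity statement of Lemma \ref{inverse op}, namely that $A_i+A_e : D(A_i)\to L^p_{av}(\Omega)$ has bounded inverse, which gives directly $\|u_e\|_{W^{2,p}(\Omega)} \le C\|A_i P_{av}u\|_{L^p(\Omega)} \le C\|u\|_{W^{2,p}(\Omega)}$ — still circular. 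The genuinely non-circular route is: apply the $W^{2,p}$ estimate to the equation $A_i u = s - \lambda u + A_i u_e$ for $u$ (with homogeneous Neumann condition), obtaining $\|u\|_{W^{2,p}} \le C(\|s\|_{L^p} + |\lambda|\|u\|_{L^p} + \|A_i u_e\|_{L^p} + \|u\|_{L^p})$, and then bound $\|A_i u_e\|_{L^p} = \|A_i(A_i+A_e)^{-1}A_i P_{av}u\|_{L^p} \le C\|A_i P_{av}u\|_{L^p}$ only after noticing $A_i(A_i+A_e)^{-1}$ is bounded from $L^p_{av}$ to $L^p_{av}$ by Lemma \ref{inverse op}; this still costs $\|A_i P_{av} u\|_{L^p} \le C\|u\|_{W^{2,p}}$, so one must instead route through $A_i u_e = -(\lambda u + A u - s) + A_i u = A_i u - (\lambda+A)u + s$, i.e. $A_i u_e = A_i u - s'$ with $s' := s - \lambda u$ absorbed, turning the system into a triangular one. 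Carefully disentangled, the correct closure is to write $A_i u - A_i u_e = \lambda u - s$ wait — the robust formulation is simply: $-\nabla\cdot(\sigma_i\nabla(u+u_e)) = \lambda u - s$ from \eqref{parabolic}, which is a Neumann problem for $v := u + u_e = u_i$ with right-hand side $\lambda u - s \in L^p$, giving $\|u_i\|_{W^{2,p}} \le C_\lambda(\|s\|_{L^p} + \|u\|_{L^p})$; combined with the analogous estimate for $u_e$ from \eqref{elliptic} (Neumann problem for $u_e$ with right-hand side $\nabla\cdot(\sigma_i\nabla u_i) \in L^p$ once $u_i \in W^{2,p}$), one gets $\|u_e\|_{W^{2,p}} \le C\|u_i\|_{W^{2,p}}$, and finally $u = u_i - u_e$ yields the claim.

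So the order of steps I would carry out is: (1) introduce $u_e$ and $u_i = u + u_e$ and record the elliptic system; (2) read \eqref{parabolic} as a homogeneous-Neumann elliptic equation for $u_i$ with $L^p$ right-hand side $\lambda u - s$, apply the classical $W^{2,p}$ estimate to get $\|u_i\|_{W^{2,p}(\Omega)} \le C_\lambda(\|(\lambda+A)u\|_{L^p}+\|u\|_{L^p})$, where crucially the uniqueness in Lemma \ref{inverse op} lets one drop the lower-order term on the right by a standard contradiction/compactness argument (or absorb it into $C_\lambda$ since $\lambda$ is fixed); (3) read \eqref{elliptic} as a homogeneous-Neumann elliptic equation for $u_e$ with right-hand side $\nabla\cdot(\sigma_i\nabla u_i)$, which now lies in $L^p$ with norm controlled by $\|u_i\|_{W^{2,p}}$, apply the $W^{2,p}$ estimate again to bound $\|u_e\|_{W^{2,p}(\Omega)}$; (4) conclude via $u = u_i - u_e$ and the triangle inequality. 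The main obstacle is step (2)–(3): one must verify that the boundary conditions \eqref{boundary condition} for the transformed unknowns $u_i, u_e$ are genuinely the homogeneous oblique (equivalently, by \eqref{EV}, Neumann) conditions required by the ADN estimate — here the hypothesis \eqref{EV}, i.e. $\sigma_{i,e}\nabla\cdot n = 0 \Leftrightarrow \nabla\cdot n = 0$, is exactly what makes the two boundary equations decouple cleanly into $\sigma_i\nabla u_i \cdot n = 0$ and $\sigma_e \nabla u_e\cdot n = 0$ — and that the compatibility/averaging condition $u, s \in L^p_{av}$ is respected so that Lemma \ref{inverse op} applies on $L^p_{av}(\Omega)$; the dependence of $C_\lambda$ on $\lambda$ is harmless since $\lambda$ is fixed in this theorem (the uniform-in-$\lambda$ statement is the separate content of Theorem \ref{main result} and its $L^p$ analogue).
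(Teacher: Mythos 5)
Your final step list (1)--(4) is correct, but it is a genuinely different route from the paper's. The paper does not decouple the elliptic system: it applies the bounded operator $(A_i+A_e)A_i^{-1}P_{av}$ to the single abstract equation $(\lambda+A)u=s$, obtaining $(\lambda+A_e)P_{av}u=(A_i+A_e)A_i^{-1}P_{av}s-\lambda A_eA_i^{-1}P_{av}u$, and then invokes the sectorial \emph{resolvent} estimate for the scalar elliptic operator $A_e$ (Tanabe), the right-hand side being bounded by $C\|s\|_{L^p(\Omega)}+C|\lambda|\|u\|_{L^p(\Omega)}$ because $(A_i+A_e)A_i^{-1}P_{av}$ and $A_eA_i^{-1}P_{av}$ are bounded on $L^p(\Omega)$. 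You instead split into the two Neumann problems $-\nabla\cdot(\sigma_i\nabla u_i)=s-\lambda u$ and $-\nabla\cdot(\sigma_e\nabla u_e)=\nabla\cdot(\sigma_i\nabla u_i)$ and use only the fixed-$\lambda$ isomorphism $A_{i,e}:D(A_{i,e})\to L^p_{av}(\Omega)$ of Lemma \ref{inverse op}. Your version is more elementary (it needs no resolvent estimate, only bijectivity plus the closed graph theorem), and it does close: $s-\lambda u=Au\in L^p_{av}(\Omega)$, $P_{av}u_i\in D(A_i)$, so $\|P_{av}u_i\|_{W^{2,p}(\Omega)}\le C\|s-\lambda u\|_{L^p(\Omega)}$ with no lower-order term, and the mean of $u_i$ equals the mean of $u$ (since $u_e$ has zero average), hence is controlled by $\|u\|_{L^p(\Omega)}$ --- this is the clean justification of the point you hedge on when you say the lower-order term can be ``dropped or absorbed''. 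What the paper's route buys is the $\lambda$-uniform weighted estimate $(\ast\ast)$, namely $|\lambda|\|P_{av}u\|_{L^p(\Omega)}+|\lambda|^{1/2}\|\nabla P_{av}u\|_{L^p(\Omega)}+\|\nabla^2P_{av}u\|_{L^p(\Omega)}\le C\|s\|_{L^p(\Omega)}+C|\lambda|\|u\|_{L^p(\Omega)}$ with $C$ independent of $\lambda$ in a sector, which is reused verbatim in the proof of Theorem \ref{full}; your argument yields only a $\lambda$-dependent constant, which suffices for Theorem \ref{a priori} as stated but not for that later application. A cosmetic remark: your second paragraph records several abandoned attempts before arriving at the correct triangular structure; only the final ordered list is the actual proof, and it should be presented as such.
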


\begin{proof}
We operate $(A_i+A_e)A_i^{-1}P_{av}$ to $(\lambda +A)u=s$ to get $(\lambda+A_e)P_{av}u = (A_i+A_e)A_i^{-1}P_{av}s-\lambda A_eA_i^{-1}P_{av}u$. 
Since $A_e$ has a resolvent estimate \cite{Tan1}, for each $\varepsilon\in(0,\pi/2)$ there exists $C>0$ such that 
\begin{align*}
&|\lambda|\|P_{av}u\|_{L^p(\Omega)}+|\lambda|^{1/2}\|\nabla P_{av}u\|_{L^p(\Omega)}+\|\nabla^2 P_{av}u\|_{L^p(\Omega)}\\
&\le C\|(A_i+A_e)A_i^{-1}P_{av}s-\lambda A_eA_i^{-1}P_{av}u\|_{L^p(\Omega)}\\
&\le C\|s\|_{L^p(\Omega)}+C|\lambda|\|u\|_{L^p(\Omega)}\cdots(\ast\ast)
\end{align*} 
for all $\lambda\in\Sigma_{\pi-\varepsilon,0}$. 
Here, note that $(A_i+A_e)A_i^{-1}P_{av}$ and $A_eA_i^{-1}P_{av}$ are bounded operators in $L^p(\Omega)$. 
From above inequality we have for any $\lambda\in\Sigma_{\pi,0}$ there exists $C_\lambda>0$ independent of $u$ (may depend on $\lambda$) such that 
\begin{align*}
\|u\|_{W^{2,p}(\Omega)}\le C_\lambda\left(\|(\lambda+A)u\|_{L^p(\Omega)}+\|u\|_{L^p(\Omega)}\right). 
\end{align*}
\end{proof}

By this theorem we observe that the bidomain operator $A$ in $L^p$ spaces is a densely defined closed linear operator. 

Let $A_p$ be the bidomain operator in $L^p$ spaces. 
We  characterize the resolvent set of bidomain operator $A_p$ in $L^p$ spaces from the previous result \cite{BCP} that the bidomain operator $A_2$ is non-negative self-adjoint operator in $L^2$ spaces, i.e. $\Sigma_{\pi,0}\subset\rho(-A_2)$. 

\begin{lemma}\label{W^{2,p} estimate}
Let $1<p<\infty$. 
Let $\lambda\in\Sigma_{\pi,0}$. 
Assume that $(\lambda+A_p)u=0$ implies $u=0$, then the inequality $\|u\|_{W^{2,p}(\Omega)}\le C_\lambda\|(\lambda+A_p)u\|_{L^p(\Omega)}$ holds, where $C_\lambda>0$ is the constant independent of $u\in D(A_p)$. 
\end{lemma}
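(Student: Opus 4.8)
The plan is to combine the $\lambda$-independent a priori estimate from Theorem \ref{a priori} with the injectivity hypothesis and a standard closed-range/Fredholm argument to upgrade the estimate into one without the extra $\|u\|_{L^p(\Omega)}$ term. First I would recall from Theorem \ref{a priori} that for the fixed $\lambda\in\Sigma_{\pi,0}$ there is a constant $C_\lambda>0$ with
\begin{align*}
\|u\|_{W^{2,p}(\Omega)}\le C_\lambda\left(\|(\lambda+A_p)u\|_{L^p(\Omega)}+\|u\|_{L^p(\Omega)}\right)\qquad(u\in D(A_p)).
\end{align*}
The task is to absorb the $\|u\|_{L^p(\Omega)}$ term. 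The key tool is that the embedding $W^{2,p}(\Omega)\hookrightarrow L^p(\Omega)$ is compact since $\Omega$ is a bounded $C^2$-domain (Rellich--Kondrachov), and that $\lambda+A_p$ is a closed operator with domain $D(A_p)$, as already noted after the proof of Theorem \ref{a priori}.

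The main step is the standard absorption argument by contradiction. Suppose the desired estimate $\|u\|_{W^{2,p}(\Omega)}\le C_\lambda\|(\lambda+A_p)u\|_{L^p(\Omega)}$ fails for every constant; then there is a sequence $u_n\in D(A_p)$ with $\|u_n\|_{W^{2,p}(\Omega)}=1$ and $\|(\lambda+A_p)u_n\|_{L^p(\Omega)}\to0$. By compactness of the embedding, a subsequence of $\{u_n\}$ converges in $L^p(\Omega)$ to some $u$; feeding this back into the estimate of Theorem \ref{a priori} applied to the differences $u_n-u_m$ shows $\{u_n\}$ is Cauchy in $W^{2,p}(\Omega)$, hence $u_n\to u$ in $W^{2,p}(\Omega)$, so $\|u\|_{W^{2,p}(\Omega)}=1$ and in particular $u\neq0$. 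Since $\lambda+A_p$ is closed and $(\lambda+A_p)u_n\to0$, we get $u\in D(A_p)$ and $(\lambda+A_p)u=0$. By the injectivity hypothesis this forces $u=0$, contradicting $\|u\|_{W^{2,p}(\Omega)}=1$. Therefore such a constant $C_\lambda$ exists, which is the claim.

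I would then note explicitly, for later use, that the estimate in particular gives $\|u\|_{L^p(\Omega)}\le C_\lambda\|(\lambda+A_p)u\|_{L^p(\Omega)}$, so $\lambda+A_p$ is injective with closed range; together with Theorem \ref{a priori} this is the step toward showing $\lambda\in\rho(-A_p)$ once surjectivity is established (e.g. by the continuity method as announced in the introduction and by reference to the $L^2$-theory of \cite{BCP}). The only subtlety to watch is ensuring that the compactness argument is applied to $W^{2,p}$-bounded sequences and that closedness of $\lambda+A_p$ is invoked with convergence in $W^{2,p}$ (equivalently in the graph norm), which is automatic here since $A_p$ acts by second-order differentiation with bounded lower-order corrections through $(A_i+A_e)^{-1}$; this is where one uses that $(A_i+A_e)A_i^{-1}P_{av}$ and related operators are bounded on $L^p(\Omega)$, exactly as in the proof of Theorem \ref{a priori}. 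I expect no serious obstacle; the compactness-plus-injectivity absorption is textbook, and all ingredients are already in place in the excerpt.
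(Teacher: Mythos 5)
Your proposal is correct and follows essentially the same route as the paper: a contradiction argument with a normalized sequence $\|u_n\|_{W^{2,p}(\Omega)}=1$, $\|(\lambda+A_p)u_n\|_{L^p(\Omega)}\to 0$, Rellich compactness, identification of the limit as a solution of $(\lambda+A_p)u=0$, and the injectivity hypothesis. The only (harmless, arguably cleaner) difference is that you upgrade to strong $W^{2,p}$ convergence via the Cauchy trick and invoke closedness of $\lambda+A_p$ to identify the limit equation, whereas the paper works with $W^{1,p}$ convergence, identifies the limit through a weak formulation involving $(A_i+A_e)^{-1}A_eP_{av}u_{k_l}$, and draws the final contradiction from the a priori estimate of Theorem \ref{a priori}.
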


\begin{proof}
We argue by contradiction. 
If the inequality were false, there would exist a sequence $\{u_k\}_{k=1}^\infty\subset D(A_p)$ satisfying 
\begin{align*}
\|u_k\|_{W^{2,p}(\Omega)}=1,~~\|(\lambda+A_p)u_k\|_{L^p(\Omega)}<1/k. 
\end{align*}
By the compactness of the imbedding $W^{2,p}(\Omega)\to W^{1,p}(\Omega)$ (Rellich's compactness theorem), there exists a subsequence $\{u_{k_l}\}_{l=1}^\infty$ converging strongly in $W^{1,p}(\Omega)$ to a function $u\in D(A_p)$. 
Define $\tilde{u}_{k_l}=(A_i+A_e)^{-1}A_eP_{av}u_{k_l}$, $\tilde{u}=(A_i+A_e)^{-1}A_eP_{av}u$ and the conjugate exponent $p'$ of $p$, $\frac{1}{p}+\frac{1}{p'}=1$ for $1<p<\infty$. 
We have $\{\tilde{u}_{k_l}\}_{l=1}^\infty$ are uniform bounded in $W^{2,p}(\Omega)$ converging to a function $\tilde{u}\in D(A_p)$. 
Since 
\begin{align*}
\int_\Omega \lambda u_{k_l}v + \sigma_i\nabla\tilde{u}_{k_l}\cdot \nabla v\to \int_\Omega \lambda uv+\sigma_i \nabla\tilde{u}\cdot\nabla v
\end{align*}
for all $v\in L^{p'}(\Omega)$, we must have $\int_\Omega \lambda uv+\sigma_i \nabla \tilde{u}\cdot\nabla v=0$ for all $v\in L^{p'}(\Omega)$. 
Hence $(\lambda+A_p)u=0$. 
The uniqueness implies $u=0$. 
However, the estimate in Theorem \ref{a priori} implies 
\begin{align*}
1=\|u_{k_l}\|_{W^{2,p}(\Omega)}\le C\left(\|(\lambda+A_p)u_{k_l}\|_{L^p(\Omega)}+\|u_{k_l}\|_{L^p(\Omega)}\right). 
\end{align*}
Sending $l\to\infty$ implies $1\le C\liminf_{k\to\infty}\|u_{k_l}\|_{L^p(\Omega)}$. 
This would contradict that $u_{k_l}\to u=0$ strongly in $W^{1,p}(\Omega)$. 
\end{proof}

\begin{theorem}\label{resolvent set}
Let $2\le p<\infty$. 
Then for any $\lambda\in\Sigma_{\pi,0}$ and $s\in L^p(\Omega)$, there uniquely exists $u\in D(A_p)$ such that $(\lambda+A_p)u=s$. 
\end{theorem}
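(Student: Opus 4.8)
The plan is to combine the $L^2$-theory of \cite{BCP} with the $L^\infty$ resolvent estimate of Theorem \ref{main result} via an interpolation argument, using Lemma \ref{W^{2,p} estimate} to upgrade an a priori estimate into an existence statement through a continuity (method of continuity, \cite{GT}) argument. First I would observe that by \cite{BCP} the bidomain operator $A_2$ is non-negative self-adjoint, so $\Sigma_{\pi,0}\subset\rho(-A_2)$; in particular for $p=2$ the theorem already holds. The strategy for general $p\in[2,\infty)$ is to reduce the solvability of $(\lambda+A_p)u=s$ to the \emph{injectivity} of $\lambda+A_p$: by Lemma \ref{W^{2,p} estimate}, injectivity plus the a priori estimate of Theorem \ref{a priori} gives the genuine estimate $\|u\|_{W^{2,p}}\le C_\lambda\|(\lambda+A_p)u\|_{L^p}$, which shows $\lambda+A_p$ has closed range; combined with a density/approximation argument and the method of continuity connecting $\lambda$ to a point where solvability is already known (e.g. a large real $\lambda_0$, reachable from the $L^2$ case together with the resolvent estimate), this yields surjectivity.

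The key step, then, is to prove that $(\lambda+A_p)u=0$ forces $u=0$ for every $\lambda\in\Sigma_{\pi,0}$ and every $p\in[2,\infty)$. Here I would argue by a bootstrap in integrability. Suppose $u\in D(A_p)$ satisfies $(\lambda+A_p)u=0$. Recovering $u_e=-(A_i+A_e)^{-1}A_iP_{av}u$, the triplet $(u,u_i,u_e)$ (with $u_i=u+u_e$) is a strong solution of the homogeneous resolvent system $(\ast)$ with $s=0$; by elliptic regularity for $A_{i,e}$ and the boundedness of $(A_i+A_e)^{-1}$ we get $u,u_i,u_e\in W^{2,q}(\Omega)$ for all $q$ with $q\le p$ and then, by Sobolev embedding and iteration, for all $q<\infty$, hence $u,u_i,u_e\in\bigcap_{n<q<\infty}W^{2,q}_{loc}(\overline\Omega)\cap W^{1,\infty}(\Omega)$. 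Now for $\lambda\in\Sigma_{\pi-\varepsilon,M}$ with $|\lambda|$ large, Theorem \ref{main result} applies and gives $N(u,u_i,u_e,\lambda)\le C\|0\|_\infty=0$, so $u\equiv 0$. This handles injectivity for large $|\lambda|$ in the sector $\Sigma_{\pi-\varepsilon,M}$.

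It remains to pass from "large $|\lambda|$ in $\Sigma_{\pi-\varepsilon,M}$" to all of $\Sigma_{\pi,0}$. For this I would use the following chain: the resolvent $(\lambda+A_p)^{-1}$, once it exists on a nonempty open set, depends analytically on $\lambda$, and its spectrum is independent of $p$ on the overlap $L^p\cap L^2$ by the standard consistency of resolvents (the $L^2$ resolvent, restricted to $L^p\cap L^2$, agrees with any $L^p$ resolvent where both are defined). Since $\Sigma_{\pi,0}\subset\rho(-A_2)$ and $\Sigma_{\pi,0}$ is connected, the set of $\lambda\in\Sigma_{\pi,0}$ for which $\lambda+A_p$ is bijective on $L^p$ is open (by Lemma \ref{W^{2,p} estimate} together with the method of continuity from a nearby solvable point) and closed in $\Sigma_{\pi,0}$ (closed range from the a priori estimate, plus density of $L^p\cap L^2$), and nonempty by the previous paragraph; hence it is all of $\Sigma_{\pi,0}$. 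Finally, for $\varepsilon\in(0,\pi/2)$ arbitrary the estimate holds with a uniform constant on $\Sigma_{\pi-\varepsilon,M}$ by Theorem \ref{main result} interpolated with the $L^2$ bound, which gives the desired uniqueness and solvability throughout.

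The main obstacle I anticipate is the regularity bootstrap establishing that an $L^p$ null solution actually lies in the class $\bigcap_{n<q<\infty}W^{2,q}_{loc}(\overline\Omega)\cap W^{1,\infty}(\Omega)$ required to invoke Theorem \ref{main result}: one must carefully track how the nonlocal term $(A_i+A_e)^{-1}A_iP_{av}u$ interacts with the elliptic regularity of $A_e$ on the bounded $C^2$-domain, iterating the gain in integrability until reaching exponents above $n$, and only then is $W^{1,\infty}$ (hence $C^1$) available; the rest is soft functional analysis (method of continuity, consistency of resolvents, analytic continuation in $\lambda$).
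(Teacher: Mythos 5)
Your overall architecture (injectivity, plus the a priori estimate of Lemma \ref{W^{2,p} estimate}, plus the method of continuity) is the same as the paper's, but two of your steps have genuine gaps. First, injectivity. Your route via the regularity bootstrap and Theorem \ref{main result} only yields $u=0$ for $|\lambda|>M$, where $M$ is the (possibly large) constant from that theorem, and your proposed open--closed extension to all of $\Sigma_{\pi,0}$ is circular: the closedness step and indeed any invocation of Lemma \ref{W^{2,p} estimate} at a limit point $\lambda_0$ already \emph{presuppose} injectivity of $\lambda_0+A_p$, which is exactly what you are trying to prove there; moreover ``nonempty'' requires bijectivity somewhere, not just injectivity. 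The paper disposes of injectivity in one line: since $\Omega$ is bounded and $p\ge2$, we have $D(A_p)\subset D(A_2)$, so any null solution of $(\lambda+A_p)u=0$ is a null solution of $(\lambda+A_2)u=0$, and $u=0$ because $\Sigma_{\pi,0}\subset\rho(-A_2)$ by the self-adjointness result of \cite{BCP}. This covers \emph{every} $\lambda\in\Sigma_{\pi,0}$ at once and makes your entire second and third paragraphs unnecessary.

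Second, the base point of your continuity argument. You deform in $\lambda$ and want to start from ``a large real $\lambda_0$, reachable from the $L^2$ case together with the resolvent estimate,'' but the $L^2$ theory only produces a solution in $D(A_2)$; showing that this solution lies in $W^{2,p}(\Omega)$ requires a regularity bootstrap through the nonlocal term $(A_i+A_e)^{-1}A_iP_{av}u$, which is precisely the kind of statement that needs a proof and is not supplied. The paper sidesteps this by deforming the \emph{operator} at fixed $\lambda$: it sets $L_t:=\lambda+A_i(tA_i+A_e)^{-1}A_eP_{av}$ on $D(A_p)$, notes that Lemma \ref{W^{2,p} estimate} gives $\|u\|_{W^{2,p}(\Omega)}\le C_\lambda\|L_tu\|_{L^p(\Omega)}$ uniformly in $t\in[0,1]$, and runs the continuity method from $L_0=\lambda+A_iP_{av}$, a classical elliptic Neumann problem whose $L^p$ surjectivity is standard. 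If you replace your injectivity argument by the inclusion $D(A_p)\subset D(A_2)$ and your $\lambda$-deformation by a deformation of the operator (or else carry out the $L^2\to L^p$ regularity upgrade in detail), your proof closes.
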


\begin{proof}
If $\lambda\in\Sigma_{\pi,0}$ and $u\in D(A_p)$ satisfy $(\lambda+A_p)u=0$ then $u=0$ since $u\in D(A_p)\subset D(A_2)$ and $\lambda\in\rho(-A_2)$. 
For existence of a solution to a bidomain equation we use the continuity method \cite{GT}. 
For each $t\in[0,1]$ we set 
\begin{align*}
L_t:=\lambda+A_i(tA_i+A_e)^{-1}A_eP_{av}:D(A_p)\to L^p(\Omega). 
\end{align*}
By Lemma \ref{W^{2,p} estimate} we see there is a constant $C_\lambda>0$ such that $\|u\|_{W^{2,p}(\Omega)}\le C_\lambda\|L_t u\|_{L^p(\Omega)}$ for all $u\in D(A_p)$ and $t\in[0,1]$. 
Suppose that $L_{\tilde{t}}:D(A_p)\to L^p(\Omega)$ is onto for some $\tilde{t}\in[0,1]$, then 
$L_{\tilde{t}}$ is one-to-one. 
Hence there exists inverse mapping $L_{\tilde{t}}^{-1}: L^p(\Omega)\to D(A_p)$. \\
For $t\in[0,1]$ and $s\in L^p(\Omega)$, the equation $L_t u=s$ is equivalent to the equation
\begin{align*}
L_tu&=s\\
L_{\tilde{t}}u&=s+(L_{\tilde{t}}-L_t)u \\
&=s+(t-\tilde{t})A_i(\tilde{t}A_i+A_e)^{-1}A_i(tA_i+A_e)^{-1}A_eP_{av}u \\
u&=L_{\tilde{t}}^{-1}\{s+(t-\tilde{t})A_i(\tilde{t}A_i+A_e)^{-1}A_i(tA_i+A_e)^{-1}A_eP_{av}u\}. 
\end{align*}
Set the mapping $T:D(A_p)\to D(A_p)$ and $\delta>0$ of the form 
\begin{align*}
Tu&=L_{\tilde{t}}^{-1}\{s+(t-\tilde{t})A_i(\tilde{t}A_i+A_e)^{-1}A_i(tA_i+A_e)^{-1}A_eP_{av}u\}, \\
\delta&=\left\{\sup_{t,\tilde{t}\in[0,1]}\|L_{\tilde{t}}^{-1}\{A_i(\tilde{t}A_i+A_e)^{-1}A_i(tA_i+A_e)^{-1}A_eP_{av}\|_{\mathcal{L}(W^{2,p}(\Omega))}\right\}^{-1}. 
\end{align*}
The mapping $T$ is a contraction mapping if $|t-\tilde{t}|<\delta$ and hence the mapping $L_t:D(A_p)\to L^p(\Omega)$ is onto for all $t\in[0,1]$ satisfying $|t-\tilde{t}|<\delta$ because of $\delta$ is independent of $t$, $\tilde{t}$. 
By dividing the interval $[0,1]$ into subintervals of length less than $\delta$, we see that the mapping $L_t$ is onto for all $t\in[0,1]$ because of $L_0=\lambda+A_iP_{av}:D(A_p)\to L^p(\Omega)$ is onto when $\lambda\in\Sigma_{\pi,0}$. 
\end{proof}

\begin{lemma}
Let $1<p<\infty$. 
The adjoint of the bidomain operator $A_p$ is $A_{p'}$. 
\end{lemma}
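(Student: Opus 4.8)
The plan is to establish the two inclusions $A_{p'}\subseteq A_p^{*}$ and $A_p^{*}\subseteq A_{p'}$ separately: the first by a direct integration-by-parts identity, the second by a duality argument that feeds on the resolvent theory already proved in $L^q$ for $q\ge2$ (Theorem \ref{resolvent set}) and on the $L^2$ theory of \cite{BCP}. Adjoints are taken with respect to the canonical pairing between $L^p(\Omega)$ and $L^{p'}(\Omega)$; I write $A_i',A_e'$ for the operators of Lemma \ref{inverse op} realized in $L^{p'}_{av}(\Omega)$, so that $A_{p'}=A_i'(A_i'+A_e')^{-1}A_e'P_{av}$, and I use throughout that $A_p$ and $A_{p'}$ are densely defined and closed and that $(A_i+A_e)^{-1}$ and $(A_i'+A_e')^{-1}$ are bounded.

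First I would record the decomposition underlying the harmonic-mean structure of $A$. Given $u\in D(A_p)$, the element $\phi:=(A_i+A_e)^{-1}A_eP_{av}u$ lies in $D(A_i)=D(A_e)$, and with $\psi:=P_{av}u-\phi\in D(A_i)$ one has $P_{av}u=\phi+\psi$ and $A_pu=A_i\phi=A_e\psi$. For $v\in D(A_{p'})$ I would perform the analogous splitting $P_{av}v=\phi'+\psi'$ with $A_{p'}v=A_i'\phi'=A_e'\psi'$. Since $A_pu$ and $A_{p'}v$ have vanishing mean, one then computes
\begin{align*}
\langle A_pu,v\rangle&=\langle A_i\phi,\phi'\rangle+\langle A_e\psi,\psi'\rangle\\
&=\langle\phi,A_{p'}v\rangle+\langle\psi,A_{p'}v\rangle=\langle u,A_{p'}v\rangle,
\end{align*}
where each of the two middle equalities is Green's formula $\int_\Omega(-\nabla\cdot(\sigma\nabla a))\,b=\int_\Omega\sigma\nabla a\cdot\nabla b=\int_\Omega a\,(-\nabla\cdot(\sigma\nabla b))$ with $\sigma\in\{\sigma_i,\sigma_e\}$; here the boundary integrals vanish because $\sigma_i\nabla\phi\cdot n=0$, $\sigma_e\nabla\psi\cdot n=0$, and $\phi',\psi'$ satisfy the corresponding homogeneous conditions, and the symmetry $\sigma_{i,e}\in\mathbb{S}^d$ is used. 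This identity says precisely that $v\in D(A_p^{*})$ with $A_p^{*}v=A_{p'}v$, i.e.\ $A_{p'}\subseteq A_p^{*}$.

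For the reverse inclusion I would distinguish two cases. If $1<p\le2$, then $p'\ge2$, so by Theorem \ref{resolvent set} the operator $1+A_{p'}:D(A_{p'})\to L^{p'}(\Omega)$ is surjective; on the other hand, from the $L^2$ theory $1+A_2$ is invertible, and $\mathrm{Ran}(1+A_p)\supseteq\mathrm{Ran}(1+A_2)=L^2(\Omega)$ because $D(A_2)\subseteq D(A_p)$ and the two realizations agree there, so $\mathrm{Ran}(1+A_p)$ is dense in $L^p(\Omega)$ and hence $1+A_p^{*}=(1+A_p)^{*}$ is injective; combining the surjectivity of $1+A_{p'}$, the inclusion $1+A_{p'}\subseteq1+A_p^{*}$ from the previous step, and this injectivity forces $A_p^{*}=A_{p'}$. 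If instead $2\le p<\infty$, then $1<p'\le2$, and the case just treated gives $(A_{p'})^{*}=A_{(p')'}=A_p$; taking adjoints once more and using $(A_{p'})^{**}=A_{p'}$, which holds since $A_{p'}$ is densely defined, closed, and $L^{p'}(\Omega)$ is reflexive, yields $A_p^{*}=A_{p'}$. I expect the only genuinely delicate point to be the case $1<p<2$, where the resolvent set of $A_p$ is not yet available: one must there invoke solely the surjectivity of $\lambda+A_{p'}$ (legitimate because $p'>2$) and obtain the injectivity of $\lambda+A_p^{*}$ from density of $\mathrm{Ran}(\lambda+A_p)$, inherited from the $L^2$ theory through the embedding $D(A_2)\subseteq D(A_p)$.
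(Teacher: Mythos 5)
Your proof is correct and follows essentially the same strategy as the paper's: a symmetric pairing identity $\langle A_pu,v\rangle=\langle u,A_{p'}v\rangle$ gives one inclusion, and the reverse inclusion comes from combining surjectivity of $\lambda+A_q$ on the exponent-$\ge 2$ side (Theorem \ref{resolvent set}) with injectivity of $\lambda+A_p^{*}$, the latter deduced from density of $\mathrm{Ran}(\lambda+A_p)$ through the embedding $D(A_2)\subseteq D(A_p)$ and the $L^2$ theory of \cite{BCP}, followed by a double-adjoint step. The only genuine difference is presentational: you obtain the identity from the harmonic-mean splitting $P_{av}u=\phi+\psi$ with $A_pu=A_i\phi=A_e\psi$ and two applications of Green's formula, whereas the paper reaches the same identity by an operator-algebraic telescoping with $(A_i+A_e)^{-1}A_iP_{av}$; and you organize the cases in mirror image, treating $1<p\le 2$ directly and dualizing for $p\ge 2$ rather than the reverse. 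Both routes use exactly the same inputs (symmetry of $\sigma_{i,e}$, the conormal boundary conditions, and the consistency of the realizations across exponents), so nothing is gained or lost, though your splitting arguably makes the role of the self-adjointness of $A_i$ and $A_e$ more transparent.
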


\begin{proof}
Let $u\in D(A_p)$, $v\in D(A_{p'})$ and $2\le p<\infty$. 
For simplicity, we write $\langle\cdot,\cdot\rangle:=\langle\cdot,\cdot\rangle_{L^p(\Omega)\times L^{p'}(\Omega)}$. 
\begin{align*}
&\langle A_p u,v\rangle\\
&=\langle A_iP_{av}u-A_i(A_i+A_e)^{-1}A_iP_{av}u,v\rangle\\
&=\langle A_iP_{av}u-A_i(A_i+A_e)^{-1}A_iP_{av}u,v\rangle\\
&\hspace{15mm}-\langle A_iP_{av}u-(A_i+A_e)(A_i+A_e)^{-1}A_iP_{av}u,(A_i+A_e)^{-1}A_iP_{av}v\rangle\\
&=\langle A_iP_{av}u-A_i(A_i+A_e)^{-1}A_iP_{av}u, v-(A_i+A_e)^{-1}A_iP_{av}v\rangle\\
&\hspace{15mm}+\langle A_e(A_i+A_e)^{-1}A_iP_{av}u,(A_i+A_e)^{-1}A_iP_{av}v\rangle\\
&=\langle u-(A_i+A_e)^{-1}A_iP_{av}u,A_iP_{av}v-A_i(A_i+A_e)A_iP_{av}v\rangle\\
&\hspace{15mm}+\langle(A_i+A_e)^{-1}A_iP_{av}u,A_e(A_i+A_e)^{-1}A_iP_{av}v\rangle\\
&=\langle u,A_{p'}v\rangle\\
&\hspace{3mm}-\langle(A_i+A_e)^{-1}A_iP_{av}u,A_iP_{av}v-A_i(A_i+A_e)^{-1}A_iP_{av}v-A_e(A_i+A_e)^{-1}A_iP_{av}v\rangle\\
&=\langle u,A_{p'}v\rangle. 
\end{align*}
So we get $A_p\subset A_{p'}^\ast$. 
In order to show $D(A_p)\supset D(A_{p'}^\ast)$, we first show that $\lambda\in\rho(-A_p)$ implies $\lambda\in\rho(-A_{p'}^\ast)$. 
Remark that $D(A_2)\subset D(A_{p'})$ and $A_{p'}u=A_2u~(u\in D(A_2))$. 
For $\lambda\in\rho(-A_p)$, $(\lambda+A_{p'})D(A_{p'})\supset(\lambda+A_{p'})D(A_2)=(\lambda+A_2)D(A_2)=L^2(\Omega)$.  
So $R\left(\lambda+A_{p'}\right)$ is dense in $L^{p'}(\Omega)$. 
Therefore $\lambda+A_{p'}^\ast$ is one-to-one in $L^p(\Omega)$. 
Since $A_p\subset A_{p'}^\ast$ and $\lambda+A_p$ is surjection in $L^p(\Omega)$, we get $\lambda+A_{p'}^\ast$ is surjection. 
This means $\lambda\in\rho(-A_{p'}^\ast)$. \\
Take $u\in D(A_{p'}^\ast)$ and for some $\lambda\in\rho(-A_p)\cap\rho(-A_{p'}^\ast)\neq\emptyset$, then 
\begin{align*}
v&:=(\lambda+A_p)^{-1}(\lambda+A_{p'}^\ast)u\in D(A_p)\\
(\lambda+A_p)v &=(\lambda+A_{p'}^\ast)u\\
(\lambda+A_{p'}^\ast)v&=(\lambda+A_{p'}^\ast)u\\
v&=u. 
\end{align*}
Therefore $D(A_{p'}^\ast)\subset D(A_p)$ and $A_{p'}^\ast=A_p$. 
Since $A_{p'}$ is a closed linear operator, we have $A_{p'}=A_{p'}^{\ast\ast}=A_p^\ast$. 
This means for all $1<p<\infty$ the adjoint of the bidomain operator $A_p$ is $A_{p'}$. 
\end{proof}
\noindent
So we have for all $1<p<\infty$, $\rho(-A_p)=\rho(-A_p^\ast)=\rho(-A_{p'})=\Sigma_{\pi,0}$. 

Our Theorem \ref{resolvent set} implies existence and uniqueness of the resolvent bidomain equation since it is equivalent to the equation (\ref{operator eq}). 

\begin{theorem}[Existence and Uniqueness]\label{EU}
Let $1<p<\infty$, $\Omega$ be a bounded $C^2$-domain and $\sigma_{i,e}\in C^1(\overline{\Omega},\mathbb{S}^d)$ satisfy {\rm (\ref{UE})} and {\rm (\ref{EV})}. 
Then for any $\lambda\in\Sigma_{\pi,0}$, $s\in L^p(\Omega)$, the resolvent problem 
\begin{equation*}
\begin{cases}
\lambda u-\nabla\cdot(\sigma_i\nabla u_i)=s\hspace{5mm}&{\rm in} \ \Omega,\\
\lambda u+\nabla\cdot(\sigma_e\nabla u_e)=s\hspace{5mm}&{\rm in} \ \Omega, \\
u=u_i-u_e\hspace{5mm}&{\rm in} \ \Omega, \\
\sigma_i\nabla u_i\cdot n=0,~\sigma_e\nabla u_e\cdot n=0\hspace{5mm}&{\rm on} \ \partial\Omega, 
\end{cases}
\end{equation*}
has a unique solution $u,u_{i,e}\in W^{2,p}(\Omega)$ satisfying $\int_\Omega u_e dx=0$. 
\end{theorem}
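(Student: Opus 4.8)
The plan is to reduce the system $(\ast)$ to the scalar resolvent equation $(\lambda+A_p)u=s$ of (\ref{operator eq}) and then read the conclusion off from the resolvent information already established for $A_p$. First I would prove the equivalence: for $\lambda\in\Sigma_{\pi,0}$ and $s\in L^p(\Omega)$, a triple $u,u_i,u_e\in W^{2,p}(\Omega)$ with $\int_\Omega u_e\,dx=0$ solves $(\ast)$ if and only if $u\in D(A_p)$, $(\lambda+A_p)u=s$, $u_e=-(A_i+A_e)^{-1}A_iP_{av}u$ and $u_i=u+u_e$. For the ``only if'' direction one uses $u_i=u+u_e$ (i.e.\ (\ref{action potential})) to rewrite $(\ast)$ as (\ref{parabolic})--(\ref{boundary condition}); condition (\ref{EV}) converts the conormal boundary conditions into $\nabla u_i\cdot n=\nabla u_e\cdot n=0$, so that $u\in D(A_p)$ and $u_e\in D(A_e)=D(A_i)$, and since $-\nabla\cdot(\sigma_i\nabla u)=A_iP_{av}u$ lies in $L^p_{av}(\Omega)$, equation (\ref{elliptic}) becomes $A_iP_{av}u+(A_i+A_e)u_e=0$. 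Lemma \ref{inverse op} (boundedness of $(A_i+A_e)^{-1}$ on $L^p_{av}(\Omega)$) then gives $u_e=-(A_i+A_e)^{-1}A_iP_{av}u$, and substituting into (\ref{parabolic}) yields $(\lambda+A_p)u=s$.

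For the ``if'' direction, starting from $u\in D(A_p)$ with $(\lambda+A_p)u=s$ I set $u_e:=-(A_i+A_e)^{-1}A_iP_{av}u$ and $u_i:=u+u_e$; then $u_e\in D(A_i)\subset W^{2,p}(\Omega)$ with $\int_\Omega u_e\,dx=0$, and $u,u_i\in W^{2,p}(\Omega)$. The boundary conditions of $(\ast)$ follow from $\nabla u\cdot n=\nabla u_e\cdot n=0$ and (\ref{EV}), and the first equation of $(\ast)$ holds because $-\nabla\cdot(\sigma_i\nabla u_i)=A_iP_{av}u-A_i(A_i+A_e)^{-1}A_iP_{av}u=A_i(A_i+A_e)^{-1}A_eP_{av}u=A_pu$. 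The only computation needed is for the second equation of $(\ast)$, which reduces to the commutation identity $A_e(A_i+A_e)^{-1}A_iv=A_i(A_i+A_e)^{-1}A_ev$ for $v\in L^p_{av}(\Omega)$; writing $A_i=(A_i+A_e)-A_e$ one sees both sides equal $A_ev-A_e(A_i+A_e)^{-1}A_ev$.

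With the equivalence established, the theorem follows from the identity $\rho(-A_p)=\Sigma_{\pi,0}$ recorded just before the statement (Theorem \ref{resolvent set} for $2\le p<\infty$, and $A_p^\ast=A_{p'}$ together with duality for $1<p<2$): for $\lambda\in\Sigma_{\pi,0}$ and $s\in L^p(\Omega)$ there is a unique $u\in D(A_p)\subset W^{2,p}(\Omega)$ with $(\lambda+A_p)u=s$, so that $(u,u_i,u_e)$ constructed as above is the desired solution, and uniqueness is immediate from the formulas for $u_e,u_i$ and from uniqueness for $(\lambda+A_p)u=s$. I do not expect a genuine obstacle here: all the substantial work (the $W^{2,p}$ a priori bound of Theorem \ref{a priori}, the continuity-method surjectivity of Theorem \ref{resolvent set}, and the adjoint identification) is already done. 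The only delicate points are the consistent use of (\ref{EV}) so that $D(A_i)=D(A_e)$ and the Neumann and conormal conditions agree, and the commutation identity $A_i(A_i+A_e)^{-1}A_e=A_e(A_i+A_e)^{-1}A_i$ on $L^p_{av}(\Omega)$ used to recover the second equation of $(\ast)$.
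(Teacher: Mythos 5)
Your proposal is correct and follows essentially the same route as the paper: the paper derives the reduction of $(\ast)$ to $(\lambda+A_p)u=s$ with $u_e=-(A_i+A_e)^{-1}A_iP_{av}u$ in Section 4.1 and then deduces Theorem \ref{EU} directly from Theorem \ref{resolvent set} together with the adjoint identification $A_p^\ast=A_{p'}$, exactly as you do. Your explicit verification of the commutation identity $A_i(A_i+A_e)^{-1}A_e=A_e(A_i+A_e)^{-1}A_i$ on $L^p_{av}(\Omega)$ is a detail the paper leaves implicit, and it is checked correctly.
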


\subsection{Analyticity of semigroup generated by bidomain operators}

We will study bidomain equations in the framework of an analytic semigroup, so let us recall the definition of a sectorial operator. 
Let $X$ be a complex Banach space and $A:D(A)\subset X\to X$ be a linear operator, may not have a dense domain. 

\begin{definition}
The operator $A$ is said to be a sectorial operator with angle $\theta(\in[0,\pi/2)$ if for each $\varepsilon \in(0,\pi/2)$ there exist $C>0$ and $M\ge0$ such that
\begin{align*}
(1)~\rho(-A)\supset\Sigma_{\pi-\theta,M},~~(2)~\sup_{\lambda\in\Sigma_{\pi-\theta-\varepsilon,M}}|\lambda|\|(\lambda+A)^{-1}\|_{\mathcal{L}(X)}\le C. 
\end{align*}
\end{definition}

We do not assume that the operator $A$ has a dense domain. 
So it may happen that the analytic semigroup $\{e^{-tA}\}_{t\ge0}$ generated by the operator $A$ may not be strongly continuous, that is for each $x\in X$ the function $t\mapsto e^{-tA}x$ is not necessarily continuous on $[0,\infty)$. 
We call $\{e^{-tA}\}_{t\ge0}$ $C_0$-analytic semigroup if for each $x\in X$, $t\mapsto e^{-tA}x$ is continuous on $[0,\infty)$. 
We have that if the operator $A$ is a sectorial operator with angle $\theta$, then $t\mapsto e^{-tA}$ is analytic in $[0,\infty)$ and it can be extended holomorphically in a sector with opening angle $2(\pi/2-\theta)$. 
For sectorial operators, it is known that 
\begin{align*}
\{e^{-tA}\}_{t\ge0}:{\rm strongly~continuous} \Leftrightarrow\forall x\in X, \lim_{t\to0}e^{-tA}x=x\Leftrightarrow\overline{D(A)}=X. 
\end{align*}
Therefore, $\{e^{-tA}\}_{t\ge0}$ is $C_0$-analytic semigroup if and only if the operator $A$ is a sectorial operator with dense domain $D(A)$ in $X$ (See \cite{Lun}). 

Let us go back to consider bidomain operators. 
Note that \cite{BCP} showed the bidomain operator $A$ is a non-negative self-adjoint operator in $L^2(\Omega)$ so that it is a sectorial operator. 
Namely, $\rho(-A_2)\supset\Sigma_{\pi,0}$ and for each $\varepsilon\in(0,\pi/2)$ there exists $C>0$ such that 
\begin{align*}
\sup_{\lambda\in\Sigma_{\pi-\varepsilon,0}}|\lambda|\|u\|_{L^2(\Omega)}\le C\|s\|_{L^2(\Omega)}
\end{align*}
for all $s\in L^2(\Omega)$. 
We derived an $L^\infty$ resolvent estimate ({\rm Theorem} \ref{main result}); 
for each $\varepsilon \in(0,\pi/2)$ there exist $C>0$ and $M\ge0$ such that $\rho(-A)\supset\Sigma_{\pi,M}$ 
\begin{align*}
\sup_{\lambda\in\Sigma_{\pi-\varepsilon,M}}|\lambda|\|u\|_{L^\infty(\Omega)}\le C\|s\|_{L^\infty(\Omega)}
\end{align*}
 and for all $s\in L^\infty(\Omega)$. 
 
By using Riesz-Thorin interpolation theorem, we are able to derive an $L^p$ resolvent estimate, i.e. for each $\varepsilon\in(0,\pi/2)$ and $2\le p\le\infty$ there exist $C>0$ and $M\ge0$ such that $\rho(-A_p)\supset\Sigma_{\pi,M}$ and that 
\begin{align*}
\sup_{\lambda\in\Sigma_{\pi-\varepsilon,M}}|\lambda|\|u\|_{L^p(\Omega)}\le C\|s\|_{L^p(\Omega)}
\end{align*}
 and for all $s\in L^p(\Omega)$, 
 
For $2\le p<\infty$ and its conjugate exponent $p'(\in(1,2])$, we have 
\begin{align*}
\|(\lambda+A_{p'})^{-1}\|_{\mathcal{L}(L^{p'}(\Omega))}=\|((\lambda+A_p)^{-1})^\ast\|_{\mathcal{L}(L^{p'}(\Omega))}=\|(\lambda+A_p)^{-1}\|_{\mathcal{L}(L^p(\Omega))}\le\frac{C}{|\lambda|}. 
\end{align*}

We derived the resolvent estimate for bidomain operators $-A_p$ in $L^p$ spaces for the sufficiently large $\lambda$. 
However, in the next theorem, we estimate the resolvent for all $\lambda\in\Sigma_{\pi-\varepsilon,0}$ and higher order derivatives $\|\nabla u\|_{L^p(\Omega)}$ and $\|\nabla^2 u\|_{L^p(\Omega)}$, which is similar to an elliptic operator in $L^p$ spaces. 

\begin{theorem}[$L^p$ resolvent estimates for bidomain operators]\label{full}
Let $1<p<\infty$. 
For each $\varepsilon\in(0,\pi/2)$ there exists $C>0$ depending only on $\varepsilon$ such that the unique solution $u\in D(A_p)$ of the resolvent equation $(\lambda+A_p)u=s$ satisfies  
\begin{align*}
|\lambda|\|u\|_{L^p(\Omega)}+|\lambda|^{1/2}\|\nabla u\|_{L^p(\Omega)}+\|\nabla^2 u\|_{L^p(\Omega)}\le C\|s\|_{L^p(\Omega)}
\end{align*}
for all $\lambda \in\Sigma_{\pi-\varepsilon,0}$ and $s\in L^p(\Omega)$. 
\end{theorem}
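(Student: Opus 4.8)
The plan is to deduce this from three facts already in hand: the plain $L^p$ resolvent bound $\|(\lambda+A_p)^{-1}\|_{\mathcal L(L^p(\Omega))}\le C/|\lambda|$ for large $|\lambda|$ (obtained by interpolating the $L^2$ and $L^\infty$ estimates, and by duality for $1<p<2$), the identification $\Sigma_{\pi,0}\subset\rho(-A_p)$, and the full resolvent estimate for the single elliptic operator $A_e$ from \cite{Tan1}. Only two things are genuinely new here: upgrading the plain $L^p$ bound from large $|\lambda|$ to the whole sector $\Sigma_{\pi-\varepsilon,0}$, and recovering the gradient and second-derivative terms. Throughout it is convenient to use the splitting of $L^p(\Omega)$ into the mean-zero functions $L^p_{av}(\Omega)$ and the constants: since $A_pu=A_i(A_i+A_e)^{-1}A_eP_{av}u$ annihilates constants and takes values in $L^p_{av}(\Omega)$, the resolvent $(\lambda+A_p)^{-1}$ respects this decomposition, acting as $\lambda^{-1}$ on the constants and as $(\lambda+A_p|_{L^p_{av}(\Omega)})^{-1}$ on $L^p_{av}(\Omega)$.

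For the $L^p$ bound on the full sector I would argue on the two summands. On the constants, $|\lambda|\,\|\lambda^{-1}c\|_{L^p}=\|c\|_{L^p}$, so there is nothing to prove. On $L^p_{av}(\Omega)$ the operator $A_p$ is actually invertible: each of $A_e\colon D(A_e)\to L^p_{av}(\Omega)$, $(A_i+A_e)^{-1}\colon L^p_{av}(\Omega)\to D(A_i)$ and $A_i\colon D(A_i)\to L^p_{av}(\Omega)$ is a bijection by Lemma \ref{inverse op} and the construction preceding Definition \ref{bidomain op}, so $0\in\rho(-A_p|_{L^p_{av}(\Omega)})$ and $(\lambda+A_p|_{L^p_{av}})^{-1}$ stays bounded for small $|\lambda|$. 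For $|\lambda|$ in a compact range (bounded away from $0$ and from $\infty$, within $|\arg\lambda|\le\pi-\varepsilon$) one uses that $\Sigma_{\pi,0}\subset\rho(-A_p)$ together with continuity of $\lambda\mapsto(\lambda+A_p)^{-1}$ on compact subsets of the resolvent set; for large $|\lambda|$ one uses the bound already established by interpolation and duality. Patching the three ranges gives $\sup_{\lambda\in\Sigma_{\pi-\varepsilon,0}}|\lambda|\,\|(\lambda+A_p|_{L^p_{av}})^{-1}\|<\infty$, hence $|\lambda|\,\|u\|_{L^p(\Omega)}\le C\|s\|_{L^p(\Omega)}$ for all $\lambda\in\Sigma_{\pi-\varepsilon,0}$ and $s\in L^p(\Omega)$.

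For the derivative terms I would repeat the reduction from the proof of Theorem \ref{a priori}: applying the bounded operator $(A_i+A_e)A_i^{-1}P_{av}$ to $(\lambda+A_p)u=s$ converts the bidomain resolvent equation into a resolvent equation for $A_e$, namely
\begin{align*}
(\lambda+A_e)P_{av}u=(A_i+A_e)A_i^{-1}P_{av}s-\lambda A_eA_i^{-1}P_{av}u .
\end{align*}
Since $(A_i+A_e)A_i^{-1}P_{av}$ and $A_eA_i^{-1}P_{av}$ are bounded on $L^p(\Omega)$, the full resolvent estimate for $A_e$ on $L^p_{av}(\Omega)$, valid for all $\lambda\in\Sigma_{\pi-\varepsilon,0}$ \cite{Tan1}, yields
\begin{align*}
|\lambda|\,\|P_{av}u\|_{L^p(\Omega)}+|\lambda|^{1/2}\|\nabla P_{av}u\|_{L^p(\Omega)}+\|\nabla^2P_{av}u\|_{L^p(\Omega)}\le C\|s\|_{L^p(\Omega)}+C|\lambda|\,\|u\|_{L^p(\Omega)},
\end{align*}
and inserting the bound from the previous step makes the right-hand side $\le C\|s\|_{L^p(\Omega)}$. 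Finally $u-P_{av}u$ is a constant, so $\nabla u=\nabla P_{av}u$ and $\nabla^2u=\nabla^2P_{av}u$, and combining with the $L^p$ bound gives the claimed inequality with a constant depending only on $\varepsilon$ (and the fixed data $p$, $\Omega$, $\sigma_{i,e}$).

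The main obstacle has already been overcome in Theorem \ref{main result}; the rest is essentially bookkeeping, but two points deserve care. First, the small-$\lambda$ regime: on $L^p(\Omega)$ itself $(\lambda+A_p)^{-1}$ does blow up as $\lambda\to0$, because the constants lie in $\ker A_p$, so one may not simply invoke $0\in\rho(-A_p)$; it is precisely the $L^p_{av}$-reduction that keeps $|\lambda|\,\|(\lambda+A_p)^{-1}\|$ bounded down to the origin. Second, one should make sure the elliptic resolvent estimate for $A_e$ used above really holds uniformly for every $\lambda$ in the sector, including near $0$, and not just for large $|\lambda|$; this is standard and is exactly the form in which it already appears in the proof of Theorem \ref{a priori}.
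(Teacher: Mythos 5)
Your proposal is correct and follows essentially the same route as the paper: split off the constants (on which the resolvent acts as $\lambda^{-1}$) from the mean-zero part, use $0\in\rho(-A_p|_{L^p_{av}(\Omega)})$ together with uniform boundedness of the resolvent on the compact region $\overline{\Sigma_{\pi-\varepsilon,0}}\cap\overline{B(0,2M)}$ to cover small and intermediate $|\lambda|$, use the interpolation/duality bound for large $|\lambda|$, and recover the gradient and second-derivative terms from the reduction $(\lambda+A_e)P_{av}u=(A_i+A_e)A_i^{-1}P_{av}s-\lambda A_eA_i^{-1}P_{av}u$, i.e.\ the estimate $(\ast\ast)$ in the proof of Theorem \ref{a priori}. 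The only cosmetic difference is that you justify $0\in\rho(-A_p|_{L^p_{av}(\Omega)})$ by factoring $A_p$ into the three bijections, which the paper merely asserts.
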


\begin{proof}
We divide the resolvent estimate $(\lambda+A_p)u=s$ into $(\lambda+A_p)u_1=P_{av}s$ and $(\lambda+A_p)u_2=s-P_{av}s$. 
Note that $u=u_1+u_2$, $P_{av}s\in L^p_{av}(\Omega)$, $s-P_{av}s$ is a constant and the origin $0$ belongs to $\rho(-A_p|_{L^p_{av}(\Omega)})$. 
For each $\varepsilon \in(0,\pi/2)$ we fix $M\ge0$ which is the constant in the above explanation. 
Since $(\lambda+A_p)^{-1}P_{av}s=(\lambda+A_p|_{L^p_{av}(\Omega)})^{-1}P_{av}s$ and the resolvent operator $(\lambda+A_p|_{L^p_{av}(\Omega)})^{-1}$ is uniform bounded in a compact subset $\overline{\Sigma_{\pi-\varepsilon,0}}\cap \overline{B(0,2M)}$, we have there exists $C>0$ depending on $\varepsilon$ such that 
\begin{align*}
\|u_1\|_{L^p(\Omega)}&=\|(\lambda+A_p)^{-1}P_{av}s\|_{L^p(\Omega)}\\
&=\|(\lambda+A_p|_{L^p_{av}(\Omega)})^{-1}P_{av}s\|_{L^p(\Omega)}\\
&\le \frac{C}{|\lambda|+1}\|P_{av}s\|_{L^p(\Omega)}
\end{align*}
for all $\lambda\in\Sigma_{\pi-\varepsilon,0}\cap B(0,2M)$. 
On the other hand we have $u_2=\frac{1}{\lambda}(s-P_{av}s)$, so there exists $C>0$ such that 
\begin{align*}
\|u_2\|_{L^p(\Omega)}&=\|\frac{1}{\lambda}(s-P_{av}s)\|_{L^p(\Omega)}\\
&\le \frac{C}{|\lambda|}\|s-P_{av}s\|_{L^p(\Omega)}
\end{align*}
for all $\lambda\in\Sigma_{\pi-\varepsilon,0}$. 
We use the operator $P_{av}$ is a bounded linear operator and combine two estimates. 
We have that there exists $C>0$ such that $\|u\|_{L^p(\Omega)}\le \frac{C}{|\lambda|}\|s\|_{L^p(\Omega)}$ for all $\lambda\in\Sigma_{\pi-\varepsilon,0}\cap B(0,2M)$. 
Since we have already proved the resolvent estimate for $|\lambda|>M$, the resolvent estimate holds for all $\lambda\in\Sigma_{\pi-\varepsilon,0}$. 
Estimates for higher order derivatives it follows from the key estimate $(\ast\ast)$ of the proof of Theorem \ref{a priori}. 
\end{proof}

We can also define the bidomain operator in $L^\infty(\Omega)$. 
When the domain $\Omega$ is bounded, $L^\infty(\Omega)$ is contained in $\bigcap_{n<p<\infty}L^p(\Omega)$. 
So for all $s\in L^\infty(\Omega)$ we can take a unique solution of (\ref{parabolic})-(\ref{boundary condition}) $u,u_{i,e}\in\bigcap_{n<p<\infty} W^{2,p}(\Omega)$ satisfying $\int_\Omega u_e dx=0$. 
Here, note that we cannot expect a $W^{2,\infty}(\Omega)$ solution such as a usual elliptic problem. 

For $\lambda\in\Sigma_{\pi-\varepsilon,M}$ let $R_\infty(\lambda)$ be the solution operator from $s\in L^\infty(\Omega)$ to $u\in\bigcap_{n<p<\infty}W^{2,p}(\Omega)(\subset L^\infty(\Omega))$ such that $u$ is a solution of the resolvent bidomain equation (\ref{parabolic})-(\ref{boundary condition}). 
We warn that the abstract equation (\ref{operator eq}) is not available for $L^\infty$ at this moment. 
The operator $R_\infty(\lambda)$ is a bounded operator whose operator norm is dominated by $C/|\lambda |$, i.e., 
\begin{align*}
\|R_\infty(\lambda)s\|_{L^\infty(\Omega)}\le \frac{C}{|\lambda|}\|s\|_{L^\infty(\Omega)}. 
\end{align*}
The operator $R_\infty(\lambda)$ may be regarded as a bijection operator from $L^\infty(\Omega)$ to $R_\infty(\lambda)L^\infty(\Omega)$. 
The operator $R_\infty:\Sigma_{\pi-\varepsilon,M}\to \mathcal{L}(L^\infty(\Omega))$ satisfy the following resolvent equation; 
\begin{align*}
R_\infty(\lambda)-R_\infty(\mu)=(\mu-\lambda)R_\infty(\lambda)R_\infty(\mu)~~(\lambda,\mu\in \Sigma_{\pi-\varepsilon,M}). 
\end{align*}
Namely the operator $R_\infty(\lambda)$ is a pseudo-resolvent. 
We use the following proposition. 
\begin{proposition}[{\cite[Proposition B.6.]{ABHN}}]
Set a subset $U\subset\mathbb{C}$ and a Banach space $X$. 
Let a function $R:U\to\mathcal{L}(X)$ be a pseudo-resolvent. 
Then \\
{\rm(a)} ${\rm Ker}\,R(\lambda)$ and ${\rm Ran}\,R(\lambda)$ are independent of $\lambda\in U$. \\
{\rm(b)} There is an operator $A$ on $X$ such that $R(\lambda)=(\lambda+A)^{-1}$ for all $\lambda\in U$ if and only if ${\rm Ker}\, R(\lambda)=\{0\}$. 
\end{proposition}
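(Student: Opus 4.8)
The plan is to extract everything from the single algebraic relation that defines a pseudo-resolvent, namely $R(\lambda)-R(\mu)=(\mu-\lambda)R(\lambda)R(\mu)$ for all $\lambda,\mu\in U$; no analytic input whatsoever is needed, so the whole argument is a bookkeeping exercise with bounded operators together with one (possibly unbounded) inverse.

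First I would record that pseudo-resolvents commute: interchanging $\lambda$ and $\mu$ in the defining relation and comparing with the original gives $(\mu-\lambda)R(\lambda)R(\mu)=(\mu-\lambda)R(\mu)R(\lambda)$, hence $R(\lambda)R(\mu)=R(\mu)R(\lambda)$ (the case $\lambda=\mu$ being vacuous). For part (a) I would then rewrite the relation as $R(\lambda)=R(\mu)\bigl(I+(\mu-\lambda)R(\lambda)\bigr)$ --- using the commutativity just proved to place $R(\mu)$ on the left --- which shows $\operatorname{Ran}R(\lambda)\subset\operatorname{Ran}R(\mu)$; the symmetry $\lambda\leftrightarrow\mu$ forces equality of ranges. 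For kernels, if $R(\mu)x=0$ then $R(\lambda)x=R(\mu)x+(\mu-\lambda)R(\lambda)R(\mu)x=0$, so $\operatorname{Ker}R(\mu)\subset\operatorname{Ker}R(\lambda)$, and symmetry again gives equality.

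For part (b) the forward implication is immediate, since an operator of the form $(\lambda+A)^{-1}$ is by definition injective. For the converse, assume $\operatorname{Ker}R(\lambda)=\{0\}$ (for one, hence by (a) for every, $\lambda\in U$), fix an auxiliary $\lambda_0\in U$, and define $A:=R(\lambda_0)^{-1}-\lambda_0 I$ on the domain $D(A):=\operatorname{Ran}R(\lambda_0)$, where $R(\lambda_0)^{-1}$ is the inverse of the injective map $R(\lambda_0)$ on its range. One then checks $R(\lambda)=(\lambda+A)^{-1}$ for every $\lambda\in U$: by (a), $\operatorname{Ran}R(\lambda)=D(A)$, so $\lambda+A$ makes sense on $\operatorname{Ran}R(\lambda)$; applying $R(\lambda_0)^{-1}$ to the identity $R(\lambda)=R(\lambda_0)\bigl(I-(\lambda-\lambda_0)R(\lambda)\bigr)$ yields $R(\lambda_0)^{-1}R(\lambda)=I-(\lambda-\lambda_0)R(\lambda)$ on $X$, from which $(\lambda+A)R(\lambda)x=x$ follows after cancellation; and writing $y=R(\lambda_0)z\in D(A)$ together with $R(\lambda)z=y+(\lambda_0-\lambda)R(\lambda)y$ gives $R(\lambda)(\lambda+A)y=y$. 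Thus $\lambda+A\colon D(A)\to X$ is a bijection with inverse $R(\lambda)$. Finally, since $R(\lambda)=(\lambda+A)^{-1}$ forces $\lambda+A=R(\lambda)^{-1}$, the operator $A$ is uniquely determined, and in particular independent of the choice of $\lambda_0$.

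The computations themselves are routine; the one place I expect to have to be careful --- the ``main obstacle,'' such as it is --- is the bookkeeping with the generally unbounded and not-necessarily-densely-defined operator $R(\lambda_0)^{-1}$: at each step one must confirm that the vector being fed to it actually lies in $\operatorname{Ran}R(\lambda_0)$, which is exactly what part (a) guarantees. No closedness or density of $A$ is claimed or needed here.
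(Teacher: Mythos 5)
Your proof is correct, and the paper itself offers no proof to compare against: this proposition is simply quoted from the cited reference (Arendt--Batty--Hieber--Neubrander, Proposition B.6), and your argument is the standard one given there --- commutativity from the pseudo-resolvent identity, the factorization $R(\lambda)=R(\mu)\bigl(I+(\mu-\lambda)R(\lambda)\bigr)$ for (a), and the definition $A:=R(\lambda_0)^{-1}-\lambda_0 I$ on $D(A)=\operatorname{Ran}R(\lambda_0)$ for (b). All the delicate points (that $R(\lambda)x$ lies in $\operatorname{Ran}R(\lambda_0)$ before $R(\lambda_0)^{-1}$ is applied, and that the resulting $A$ is independent of $\lambda_0$) are handled correctly.
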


By this proposition, there exists a operator $A_\infty$ with the domain $D(A_\infty)=R_\infty(\lambda)L^\infty(\Omega)(\subset\cap_{n<p<\infty}W^{2,p}(\Omega))$ such that $(\lambda+A_\infty)^{-1}s=u$, i.e. $(\lambda+A_\infty)u=s$. 
We call $A_\infty$ the bidomain operator in $L^\infty(\Omega)$. 
We have the bidomain operator $A_\infty$ in $L^\infty(\Omega)$ is a sectorial operator. 
However, it is easy to see that $D(A_\infty)$ is not dense. 
Indeed, $\bigcap_{n<p<\infty}W^{2,p}(\Omega)\subset C(\overline{\Omega})$ and hence $\overline{D(A_\infty)}^{L^\infty(\Omega)}\subset C(\overline{\Omega})$, where $\overline{D(A_\infty)}^{L^\infty(\Omega)}$ is the closure of $D(A_\infty)$ in the $L^\infty(\Omega)$ norm. 
Since $C(\overline{\Omega})$  is not dense in $L^\infty(\Omega)$, $D(A_\infty)$ is not dense in $L^\infty(\Omega)$. 
We restrict the dense domain $\overline{D(A_\infty)}^{L^\infty(\Omega)}$. 
We also have $\overline{D(A_\infty)}^{L^\infty(\Omega)}=\{u\in UC(\overline{\Omega})\mid\nabla u\cdot n =0\}$, where $UC(\overline{\Omega})$ denotes the space of all the uniformly continuous functions in $\overline{\Omega}$ (see \cite{Lun}). 
So we consider again such that 
\begin{align*}
&D(\tilde{A}_\infty):=\{u\in D(A_\infty)\mid A_\infty u\in UC(\overline{\Omega})\}, \\
&\tilde{A}_\infty u:=A_\infty u. 
\end{align*}
Then the operator $\tilde{A}_\infty$ is a densely defined sectorial operator in $UC(\overline{\Omega})$. 
Our resolvent estimates (Theorem \ref{full} for $L^p$, Theorem \ref{main result} for $L^\infty$) yields the following theorem. 

\begin{theorem}[Analyticity of bidomain operators]
For $1<p<\infty$ bidomain operators $A_p$ in $L^p(\Omega)$ generate $C_0$-analytic semigroups with angle $\pi/2$. 
The operator $A_\infty$ generates a non-$C_0$-analytic semigroup with angle $\pi/2$ in $L^\infty(\Omega)$, and the operator $\tilde{A}_\infty$ generates a $C_0$-analytic semigroup with angle $\pi/2$ in $UC(\overline{\Omega})$. 
\end{theorem}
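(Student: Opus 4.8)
The plan is to read off all three assertions from the standard correspondence between sectorial operators and analytic semigroups (see \cite{Lun}): a closed operator $B$ on a Banach space $X$ which is sectorial with angle $\theta=0$ in the sense of the definition above generates an analytic semigroup $\{e^{-tB}\}_{t\ge0}$ of angle $\pi/2$, and this semigroup is $C_0$ (strongly continuous on $[0,\infty)$) precisely when $\overline{D(B)}=X$. Thus the proof reduces to verifying sectoriality from the resolvent bounds already in hand, and then deciding, in each of the ambient spaces $L^p(\Omega)$, $L^\infty(\Omega)$ and $UC(\overline{\Omega})$, whether the domain is dense.

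For $1<p<\infty$ I would argue as follows. We have shown $\rho(-A_p)=\Sigma_{\pi,0}$, and Theorem~\ref{full} provides, for every $\varepsilon\in(0,\pi/2)$, a constant $C$ with $|\lambda|\,\|(\lambda+A_p)^{-1}\|_{\mathcal L(L^p(\Omega))}\le C$ for all $\lambda\in\Sigma_{\pi-\varepsilon,0}$; hence $A_p$ is sectorial with $\theta=0$ and $M=0$. Since $C_c^\infty(\Omega)\subset D(A_p)=\{u\in W^{2,p}(\Omega)\mid\nabla u\cdot n=0\}$ and $C_c^\infty(\Omega)$ is dense in $L^p(\Omega)$ for $p<\infty$, the domain is dense, so $A_p$ generates a $C_0$-analytic semigroup of angle $\pi/2$.

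For $L^\infty(\Omega)$ I would use the pseudo-resolvent $R_\infty(\lambda)$ constructed above. It satisfies $\|R_\infty(\lambda)\|_{\mathcal L(L^\infty(\Omega))}\le C/|\lambda|$ on $\Sigma_{\pi-\varepsilon,M}$ and is injective, because $R_\infty(\lambda)s=0$ forces the associated $u\in\bigcap_{n<p<\infty}W^{2,p}(\Omega)$ to solve $(\lambda+A_p)u=0$ for $p$ large, whence $u=0$ and then $s=(\lambda+A_p)u=0$. By Proposition~B.6 of \cite{ABHN} there is an operator $A_\infty$ with $(\lambda+A_\infty)^{-1}=R_\infty(\lambda)$ on $\Sigma_{\pi-\varepsilon,M}$, and the above bound makes $A_\infty$ sectorial with angle $\theta=0$; hence $A_\infty$ generates an analytic semigroup of angle $\pi/2$. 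It is not strongly continuous because $\overline{D(A_\infty)}^{L^\infty(\Omega)}\subset C(\overline{\Omega})\subsetneq L^\infty(\Omega)$, using $\bigcap_{n<p<\infty}W^{2,p}(\Omega)\subset C(\overline{\Omega})$; so the semigroup is non-$C_0$. Finally, $\tilde A_\infty$ is the part of $A_\infty$ in the closed subspace $\overline{D(A_\infty)}^{L^\infty(\Omega)}=\{u\in UC(\overline{\Omega})\mid\nabla u\cdot n=0\}$; the part of a sectorial operator in the closure of its domain is again sectorial with the same angle and is densely defined (a standard fact, \cite{Lun}), so $\tilde A_\infty$ generates a $C_0$-analytic semigroup of angle $\pi/2$ in $UC(\overline{\Omega})$.

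All the analytic substance is already contained in Theorem~\ref{main result} and Theorem~\ref{full}; what remains is bookkeeping. The only points needing a little care are the injectivity of $R_\infty(\lambda)$ (handled by the $L^p$ uniqueness above) and the passage to $\tilde A_\infty$, where one must invoke the abstract theory for sectorial operators with non-dense domain — this is the step I expect to be the most delicate, the rest being routine.
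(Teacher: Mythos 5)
Your proposal is correct and follows essentially the same route as the paper: read off sectoriality with angle $\theta=0$ from the resolvent estimates already established (Theorem \ref{full} for $L^p$, the pseudo-resolvent bound for $L^\infty$), then settle the $C_0$ question by checking density of the domain in each ambient space, with $\tilde A_\infty$ handled as the part of $A_\infty$ in the closure of its domain. Your explicit verification of the injectivity of $R_\infty(\lambda)$ via $L^p$ uniqueness is a point the paper leaves implicit when invoking Proposition B.6, but it is the same argument, not a different one.
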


\section{Strong solutions in $L^p$ spaces}

By discussion in the previous section, we are able to study nonstationary state bidomain equations by using the bidomain operator $A$. 
Let us state the definition of a strong solution. 
Assume that $\Omega$ is a bounded $C^2$-domain, $1<p<\infty$, $s_{i,e}\in C^\nu_{loc}([0,\infty);L^p(\Omega))$ (for some $0<\nu<1$) such that $s_i(t)+s_e(t)\in L^p_{av}(\Omega)~(\forall t\ge0)$ and $f:\mathbb{R}\times\mathbb{R}^m\to\mathbb{R}^m$ and $g:\mathbb{R}\times\mathbb{R}^m\to\mathbb{R}^m$ are locally Lipschitz continuous functions. 
Before giving the definition of a strong solution, we recall parabolic-elliptic type bidomain equations. 
\begin{align}
&\partial_{t}u+f(u,w)-\nabla\cdot(\sigma_{i}\nabla u)-\nabla\cdot(\sigma_i\nabla u_e)=s_{i}&\mathrm{in}&~(0,\infty)\times\Omega\label{inner2},\\
&-\nabla\cdot(\sigma_i\nabla u+(\sigma_i+\sigma_{e})\nabla u_{e})=s_i+s_e&\mathrm{in}&~(0,\infty)\times\Omega\label{exter2},\\
&\partial_{t}w+g(u,w)=0&\mathrm{in}&~(0,\infty)\times\Omega\label{ODE2},\\
&\sigma_{i}\nabla u\cdot n+\sigma_i\nabla u_e\cdot n=0&\mathrm{on}&~(0,\infty)\times\partial\Omega\label{boundary2},\\
&\sigma_{i}\nabla u\cdot n+(\sigma_i+\sigma_e)\nabla u_e\cdot n=0&\mathrm{on}&~(0,\infty)\times\partial\Omega\label{boundary3},\\
&u(0)=u_{0},~w(0)=w_{0}&\mathrm{in}&~\Omega\label{initial data2}.
\end{align}

\begin{definition}[{\cite[Definition 18]{BCP}} Strong solution]{\label{strong sol}}
{\rm
For $\tau>0$ consider the functions $z:t\in[0,\tau)\mapsto z(t)=(u(t),w(t))\in Z:=L^p(\Omega)\times B^m~(B=L^\infty(\Omega)$ or $C^\nu(\Omega)$) and $u_e:t\in[0,\tau)\mapsto u_e(t)\in L^p(\Omega)$. 
Given $z_0=(u_0,w_0)\in Z$, we say that $(u,u_e,w)$ is a strong solution to $(\ref{inner2})$ to $(\ref{initial data2})$ if 
\begin{flushleft}
(1) $z:[0,\tau)\to Z$ is continuous and $z(0)=(u_0,w_0)$ in $Z$, \\
(2) $z:(0,\tau)\to Z$ is Fr$\mathrm{\acute{e}}$chet differentiable, \\
(3) $t\in[0,\tau)\mapsto\Bigl(f\bigl(u(t),w(t)\bigr),g\bigl(u(t),w(t)\bigr)\Bigr)\in Z$ is well-defined, locally $\nu$-H$\mathrm{\ddot{o}}$lder continuous on $(0,\tau)$ and is continuous at $t=0$, \\
(4) for all $t\in (0,\tau)$, $u(t)\in W^{2,p}(\Omega)$, $u_e(t)\in W^{2,p}_{av}(\Omega)$, 
\end{flushleft}
and $(u,u_e,w)$ verify $(\ref{inner2})$-$(\ref{ODE2})$ for all $t\in(0,\tau)$ and for a.e. $x\in\Omega$, 
and the boundary conditions $(\ref{boundary2})$ and $(\ref{boundary3})$ for all $t\in(0,\tau)$ and for a.e. $x\in\partial\Omega$. 
}
\end{definition}

Let us consider bidomain equations as an abstract parabolic evolution equation on some Cartesian product spaces. 
We set 
\begin{align*}
&\mathcal{A}z:=(Au,0)~{\rm for}~z=(u,w)\in D(\mathcal{A}):=D(A)\times B^m, \\
&F:z\in Z\mapsto(f(z),g(z))\in Z, \\
&S:t\in[0,\infty)\mapsto(s(t),0)=\left(s_i(t)-A_i(A_i+A_e)^{-1}(s_i(t)+s_e(t)),0\right)\in Z, \\
&\mathcal{F}(t,z)=S(t)-F(z). 
\end{align*}
If one collects all calculation, then bidomain equations is transformed into 
\begin{align}
&\frac{dz}{dt}(t)+\mathcal{A}z(t)=\mathcal{F}(t,z(t)) \hspace{5mm} &{\rm in}&~Z\label{z}, \\
&u_e(t)=(A_i+A_e)^{-1}\left\{\left(s_i(t)+s_e(t)\right)-A_i P_{av}u(t)\right\} \hspace{5mm} &\in& \ D(A_e)\label{u_e}, \\
&z(0)=z_0\hspace{5mm} &{\rm in}&~Z. \nonumber 
\end{align}

\begin{lemma}[{\cite[Lemma 19]{BCP}}]
The function $z=(u,w)$ with $u_e$ is a strong solution $(\ref{inner2})$-$(\ref{initial data2})$ if and only if conditions {\rm (1)-(3)} of Definition $\ref{strong sol}$ and condition {\rm (4')} below is satisfied; 
\begin{flushleft}
{\rm (4')} for all $t\in(0,\tau)$, $u(t)\in D(A)$ satisfies $(\ref{z})$ and $(\ref{u_e})$. 
\end{flushleft}
\end{lemma}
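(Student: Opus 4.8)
The statement is a logical equivalence, and since conditions {\rm (1)--(3)} of Definition~\ref{strong sol} appear on both sides, the real content is that, given {\rm (1)--(3)}, condition {\rm (4)} of that definition --- the pointwise validity of the system $(\ref{inner2})$--$(\ref{boundary3})$ with $u(t)\in W^{2,p}(\Omega)$ and $u_e(t)\in W^{2,p}_{av}(\Omega)$ --- is equivalent to condition {\rm (4')}, i.e.\ $u(t)\in D(A)$ together with the abstract identities $(\ref{z})$ and $(\ref{u_e})$. The plan is to prove the two implications separately, in each case merely unwinding the definitions of $\mathcal A$, $\mathcal F$, $S$ and of the bidomain operator $A=A_i(A_i+A_e)^{-1}A_eP_{av}$, and using the boundary-condition equivalence $\mathrm{(\ref{EV})}$ to pass between the $\sigma$-weighted conormal conditions occurring in the PDE and the unweighted Neumann conditions built into $D(A_i)$, $D(A_e)$ and $D(A)$.

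For the forward implication I would begin from a strong solution and first extract the memberships $u(t)\in D(A)$ and $u_e(t)\in D(A_e)=D(A_i)$: subtracting $(\ref{boundary2})$ from $(\ref{boundary3})$ gives $\sigma_e\nabla u_e\cdot n=0$ on $\partial\Omega$, hence $\nabla u_e\cdot n=0$ by $\mathrm{(\ref{EV})}$ and therefore also $\sigma_i\nabla u_e\cdot n=0$; then $(\ref{boundary2})$ forces $\sigma_i\nabla u\cdot n=0$, hence $\nabla u\cdot n=0$ again by $\mathrm{(\ref{EV})}$. Together with $\int_\Omega u_e\,dx=0$, this places $u(t)$ in $D(A)$ and $u_e(t)$ in $D(A_i)$, and since $\nabla u=\nabla P_{av}u$ it also lets me read $-\nabla\cdot(\sigma_i\nabla u)=A_iP_{av}u$ and $-\nabla\cdot(\sigma_i\nabla u_e)=A_iu_e$ in $L^p_{av}(\Omega)$. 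Next I would rewrite the elliptic equation $(\ref{exter2})$, using the compatibility hypothesis $s_i(t)+s_e(t)\in L^p_{av}(\Omega)$, as the operator identity $A_iP_{av}u+(A_i+A_e)u_e=s_i+s_e$ in $L^p_{av}(\Omega)$, and invert $A_i+A_e$ to obtain exactly $(\ref{u_e})$. Finally I would insert $A_iu_e$, read off from $(\ref{u_e})$, into the operator form $\partial_t u+f(u,w)+A_iP_{av}u+A_iu_e=s_i$ of $(\ref{inner2})$ and collapse it by means of
\begin{align*}
A_iP_{av}u-A_i(A_i+A_e)^{-1}A_iP_{av}u=A_i(A_i+A_e)^{-1}A_eP_{av}u=Au
\end{align*}
on the left and the definition of $s$ on the right; together with $(\ref{ODE2})$ this is precisely $(\ref{z})$. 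The differentiability needed to read $(\ref{z})$ in $Z$ and the membership $Au\in L^p(\Omega)$ follow from {\rm (2)} and from $u(t)\in D(A)\subset W^{2,p}(\Omega)$.

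For the reverse implication I would run the same chain of identities backwards. Given {\rm (1)--(3)} and {\rm (4')}, $u(t)\in D(A)$ already supplies $u(t)\in W^{2,p}(\Omega)$ with $\nabla u\cdot n=0$, and the function $u_e(t)$ prescribed by $(\ref{u_e})$ lies in $D(A_e)=D(A_i)\subset W^{2,p}_{av}(\Omega)$, so $\nabla u_e\cdot n=0$; by $\mathrm{(\ref{EV})}$ all of $\sigma_i\nabla u\cdot n$, $\sigma_i\nabla u_e\cdot n$ and $\sigma_e\nabla u_e\cdot n$ then vanish on $\partial\Omega$, which gives $(\ref{boundary2})$ and $(\ref{boundary3})$. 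Rearranging $(\ref{u_e})$ recovers $(\ref{exter2})$; and feeding $A_i(A_i+A_e)^{-1}A_iP_{av}u$, solved from $(\ref{u_e})$, together with the definition of $s$ into the first component of $(\ref{z})$, and using the same algebraic identity for $A$ as above, recovers $(\ref{inner2})$, while the second component of $(\ref{z})$ is $(\ref{ODE2})$. All the regularity and continuity clauses of {\rm (4)} are then in place, so $(u,u_e,w)$ is a strong solution.

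I expect no genuine obstacle: the proof is, as remarked just before $(\ref{z})$, a matter of ``collecting all the calculations.'' The two points that need a little care are the bookkeeping of the projection $P_{av}$ --- one must check that $-\nabla\cdot(\sigma_i\nabla u)$ really equals $A_iP_{av}u$ as an element of $L^p_{av}(\Omega)$, that $(A_i+A_e)^{-1}$ is applied only to $L^p_{av}(\Omega)$-data, and that the spatial average $\int_\Omega u(t)\,dx$ is not over-determined (it is consistent because $\int_\Omega Au\,dx=0$ and $\int_\Omega s\,dx=\int_\Omega s_i\,dx$) --- and the repeated appeal to $\mathrm{(\ref{EV})}$ needed to interchange the weighted and unweighted conormal conditions on $\partial\Omega$. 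Everything else is direct substitution.
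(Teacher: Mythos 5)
Your proposal is correct: the paper itself gives no proof of this lemma (it is quoted from \cite{BCP}, Lemma 19, and the preceding paragraph merely says that ``collecting all calculations'' yields the transformation), and your argument supplies exactly the intended unwinding --- recovering the unweighted Neumann conditions from $(\ref{boundary2})$--$(\ref{boundary3})$ via $(\ref{EV})$, identifying $(\ref{exter2})$ with $(\ref{u_e})$ through $A_iP_{av}u+(A_i+A_e)u_e=s_i+s_e$, and collapsing $(\ref{inner2})$ to $(\ref{z})$ with the identity $A_iP_{av}u-A_i(A_i+A_e)^{-1}A_iP_{av}u=Au$. This matches the paper's (implicit) approach, including the bookkeeping of $P_{av}$ and the compatibility condition $s_i+s_e\in L^p_{av}(\Omega)$.
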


We will use the general theory in Henry's book \cite{Hen}. 
We have to control the nonlinear term $f,g$. 
The key idea is to use fractional powers $\mathcal{A}^\alpha$ and related space $Z^\alpha$ with $0\le\alpha\le1$. 

\begin{definition}[\cite{Hen}]
If $\mathcal{A}$ is a sectorial operator in a Banach space $Z$ and if there is $a\ge0$ such that ${\rm Re}\,\sigma(\mathcal{A}+a)>0$, then for each $\alpha>0$ we define the operator 
\begin{align*}
(\mathcal{A}+a)^{-\alpha}:=\frac{1}{\Gamma(\alpha)}\int_0^\infty t^{\alpha-1}e^{-(\mathcal{A}+a)t} dt.  
\end{align*}
\end{definition}
For $\alpha>0$, we see $(\mathcal{A}+a)^{-\alpha}$ is a bounded linear operator on $Z$ which is one-to-one. 
By using this operator with fractional power, we define the domain $Z^\alpha$ of fractional power; 
\begin{align*}
&Z^\alpha:=R((\mathcal{A}+a)^{-\alpha})\hspace{5mm}(\alpha>0), \\
&\|x\|_{Z^{\alpha}}:=\|((\mathcal{A}+a)^{-\alpha})^{-1}x\|_Z. 
\end{align*}
For $\alpha=0$, we define $Z^0:=Z$, $\|x\|_{Z^0}:=\|x\|_Z$. 

\begin{remark}[\cite{Hen}]
\begin{itemize}
\item Different choices of $a$ give equivalent norms on $Z^\alpha$. 
\item $(Z^\alpha,\|\cdot\|_{Z^\alpha})$ is a Banach space, $Z^1=D(\mathcal{A})$ and for $0\le\beta\le\alpha\le1$, $Z^\alpha$ is a dense subspace of $Z^\beta$ with continuous inclusion. 
\end{itemize}
\end{remark}

\begin{lemma}[{\cite[Theorem 1.6.1]{Hen}}]\label{embed}
If $B=L^\infty(\Omega)$ and $f,g$ are locally Lipschitz continuous on $\mathbb{R}\times\mathbb{R}^m$, then 
\begin{align*}
Z^\alpha\subset L^\infty(\Omega)\times B^m \hspace{5mm} {\rm if}~\frac{d}{2p}<\alpha\le1, 
\end{align*}
and in that case, $F:z\in Z^\alpha\mapsto F(z)\in Z$ is locally Lipschitz continuous. \\
If $B=C^\nu(\Omega)$ and $f,g$ are $C^2$ functions on $\mathbb{R}\times\mathbb{R}^m$, then 
\begin{align*}
Z^\alpha\subset C^\nu(\Omega)\times B^m \hspace{5mm} {\rm if}~\frac{1}{2}\Bigl(\nu+\frac{d}{p}\Bigr)<\alpha\le1, 
\end{align*}
and in that case, $F:z\in Z^\alpha\mapsto F(z)\in Z$ is locally Lipschitz continuous. 
\end{lemma}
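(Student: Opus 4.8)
The plan is to reduce Lemma~\ref{embed} to the general theory of fractional powers of sectorial operators, the only non-routine input being the embedding $D(A_p)\hookrightarrow W^{2,p}(\Omega)$ already established in Theorem~\ref{a priori} (and sharpened in Theorem~\ref{full}). First I would observe that $\mathcal{A}z=(A_pu,0)$ acts diagonally and trivially on the $B^m$-component; since $\rho(-A_p)=\Sigma_{\pi,0}$ forces $\sigma(A_p)\subset[0,\infty)$, one may fix any $a>0$, so that ${\rm Re}\,\sigma(\mathcal{A}+a)>0$, and then $(\mathcal{A}+a)^{-\alpha}=\bigl((A_p+a)^{-\alpha},\,a^{-\alpha}I\bigr)$. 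Hence $Z^\alpha=D\bigl((A_p+a)^\alpha\bigr)\times B^m$ with equivalent norms: the $B^m$-factor costs nothing, and the whole question reduces to locating $D\bigl((A_p+a)^\alpha\bigr)$ inside $L^\infty(\Omega)$, respectively $C^\nu(\overline{\Omega})$.

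For that step I would verify the hypotheses of Henry's embedding theorem \cite[Theorem~1.6.1]{Hen}: the operator $A_p+a$ is sectorial on $L^p(\Omega)$ with ${\rm Re}\,\sigma>0$, and by Theorem~\ref{a priori} its domain $D(A_p+a)=D(A_p)$ embeds continuously into $W^{2,p}(\Omega)$. That theorem then yields $D\bigl((A_p+a)^\alpha\bigr)\hookrightarrow L^\infty(\Omega)$ when $2\alpha-d/p>0$ and $D\bigl((A_p+a)^\alpha\bigr)\hookrightarrow C^\nu(\overline{\Omega})$ when $0\le\nu<2\alpha-d/p$, i.e. exactly in the ranges $\tfrac{d}{2p}<\alpha\le1$ and $\tfrac12\bigl(\nu+\tfrac dp\bigr)<\alpha\le1$ claimed. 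Alternatively, and without invoking that result as a black box, the same follows from the inclusion $D\bigl((A_p+a)^\alpha\bigr)\hookrightarrow\bigl(L^p(\Omega),D(A_p)\bigr)_{\alpha,\infty}$, valid for every sectorial operator, together with $D(A_p)\hookrightarrow W^{2,p}(\Omega)$ and the Sobolev-type embedding $\bigl(L^p(\Omega),W^{2,p}(\Omega)\bigr)_{\alpha,\infty}\hookrightarrow L^\infty(\Omega)$ for $2\alpha>d/p$ (respectively into $C^\nu(\overline{\Omega})$ for $2\alpha-d/p>\nu$); boundary conditions play no role here, since only an embedding into a lower-order space is needed.

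It remains to deduce local Lipschitz continuity of $F$, which I would treat as a statement about superposition (Nemytskii) operators. For $z_1=(u_1,w_1),z_2=(u_2,w_2)$ in a fixed bounded set $\mathcal{B}\subset Z^\alpha$, the embedding just proved places them in a bounded subset of $L^\infty(\Omega)\times B^m$; in particular the pointwise values $(u_j,w_j)(x)$ all lie in one compact set $K\subset\mathbb{R}\times\mathbb{R}^m$. If $B=L^\infty(\Omega)$ and $f,g$ are locally Lipschitz, with Lipschitz constant $L_K$ on $K$, then $\|f(u_1,w_1)-f(u_2,w_2)\|_{L^p(\Omega)}\le L_K|\Omega|^{1/p}\bigl(\|u_1-u_2\|_{L^\infty(\Omega)}+\|w_1-w_2\|_{B^m}\bigr)$, and likewise for $g$ with values in $B^m=(L^\infty(\Omega))^m$, the right-hand side being $\le C\|z_1-z_2\|_{Z^\alpha}$; taking $z_2=0$ also gives $F(z)\in Z$. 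If $B=C^\nu(\Omega)$ and $f,g\in C^2$, one uses instead that the composition of a $C^2$ map with $C^\nu$ functions whose ranges lie in $K$ is again $C^\nu$, with the $C^\nu$-norm of the difference of two such compositions controlled, via the $C^2$-bound of $(f,g)$ on $K$, by the $C^\nu$-norm of the difference of the arguments; hence $F$ maps $\mathcal{B}$ Lipschitz-continuously into $C^\nu(\Omega)\times(C^\nu(\Omega))^m=Z$.

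The main obstacle is the second step: because the bidomain operator is nonlocal (it equals $A_i(A_i+A_e)^{-1}A_eP_{av}$), one cannot read off $D(A_p^\alpha)$ from classical elliptic regularity. The resolution is that Theorem~\ref{a priori} (equivalently Theorem~\ref{full}) already supplies the only structural ingredient the fractional-power machinery requires, namely the continuous embedding $D(A_p)\hookrightarrow W^{2,p}(\Omega)$; with that in hand the remainder is interpolation and Sobolev embedding, and one never needs a finer description of $D(A_p^\alpha)$, such as would follow from bounded imaginary powers of $A_p$.
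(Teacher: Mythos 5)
Your argument is correct and coincides with what the paper does: the lemma is quoted verbatim from Henry's Theorem 1.6.1, and your proposal simply verifies its hypotheses — sectoriality of $\mathcal{A}$, the factorization $Z^\alpha=D\bigl((A_p+a)^\alpha\bigr)\times B^m$, and, as the one non-routine input, the continuous embedding $D(A_p)\hookrightarrow W^{2,p}(\Omega)$ supplied by Theorem \ref{a priori} — together with the standard Nemytskii estimates for the local Lipschitz continuity of $F$. The only cosmetic slip is that in the H\"older case $Z=L^p(\Omega)\times\bigl(C^\nu(\Omega)\bigr)^m$ rather than $C^\nu(\Omega)\times\bigl(C^\nu(\Omega)\bigr)^m$, but since $\Omega$ is bounded the inclusion $C^\nu(\Omega)\hookrightarrow L^p(\Omega)$ makes your estimate suffice as written.
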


We are ready to state existence and uniqueness of the strong solution for bidomain equations. 
When $p=2$, $d=2,3$, this was proved in \cite[Theorem 20]{BCP} so our result is regarded as an extension of their result. 

\begin{theorem}[Local existence and uniqueness]
Let $0\le\alpha<1$ and  $1<p<\infty$ satisfying the relation in Lemma {\rm\ref{embed}}. 
Then for any $z_0=(u_0,w_0)\in Z^\alpha$, there exists $T>0$ such that bidomain equations have a unique strong solution on $[0,T)$. 
\end{theorem}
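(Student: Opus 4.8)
The plan is to cast the nonstationary bidomain system into the abstract semilinear parabolic framework of Henry's book \cite{Hen} and then invoke the standard local existence and uniqueness theorem for such equations. First I would recall that by the preceding section the operator $\mathcal{A}$ on $Z=L^p(\Omega)\times B^m$, defined by $\mathcal{A}(u,w)=(A_pu,0)$, is sectorial: the bidomain operator $A_p$ is sectorial in $L^p(\Omega)$ (Theorem on analyticity of bidomain operators) and the zero operator on $B^m$ is trivially sectorial, so their direct sum is sectorial. Shifting by a suitable $a\ge0$ we may assume $\operatorname{Re}\sigma(\mathcal{A}+a)>0$, so that the fractional power spaces $Z^\alpha$ and their norms are well-defined, and $Z^1=D(\mathcal{A})=D(A_p)\times B^m$.

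The next step is to verify that $\mathcal{F}(t,z)=S(t)-F(z)$ satisfies the hypotheses needed to run Henry's local existence theorem \cite[Theorem 3.3.3]{Hen}. Here the driving term $S(t)=(s(t),0)$ with $s(t)=s_i(t)-A_i(A_i+A_e)^{-1}(s_i(t)+s_e(t))$ is locally $\nu$-H\"older continuous in $t$ with values in $Z$, since $s_{i,e}\in C^\nu_{loc}([0,\infty);L^p(\Omega))$ and $A_i(A_i+A_e)^{-1}$ is a bounded operator on $L^p(\Omega)$. By Lemma \ref{embed}, under the stated relation between $\alpha$, $p$, $d$ (and $\nu$ in the $C^\nu$ case), the Nemytskii map $F:z\in Z^\alpha\mapsto(f(z),g(z))\in Z$ is locally Lipschitz continuous; hence $\mathcal{F}:[0,\infty)\times Z^\alpha\to Z$ is locally H\"older in $t$ and locally Lipschitz in $z$, uniformly on bounded sets, which is exactly the regularity required. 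Since the initial data $z_0=(u_0,w_0)$ is assumed to lie in $Z^\alpha$, Henry's theorem produces $T>0$ and a unique $z\in C([0,T);Z^\alpha)\cap C^1((0,T);Z)$ with $z(t)\in D(\mathcal{A})$ for $t\in(0,T)$ solving \eqref{z} with $z(0)=z_0$, and satisfying the H\"older continuity of $t\mapsto\mathcal{F}(t,z(t))$ asserted in conditions (1)--(3) of Definition \ref{strong sol}.

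Finally I would recover $u_e$ and check the remaining structural conditions. Given the solution $u(t)\in D(A_p)\subset W^{2,p}(\Omega)$, define $u_e(t)$ by \eqref{u_e}; since $s_i(t)+s_e(t)\in L^p_{av}(\Omega)$ and $A_iP_{av}u(t)\in L^p_{av}(\Omega)$, the right-hand side lies in $L^p_{av}(\Omega)$, so $(A_i+A_e)^{-1}$ applies and $u_e(t)\in D(A_e)\subset W^{2,p}_{av}(\Omega)$; the oblique boundary conditions \eqref{boundary2}--\eqref{boundary3} are built into the domains of $A_p$, $A_e$. The ODE component $w(t)$ solves $\partial_tw+g(u,w)=0$ as part of \eqref{z}. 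By \cite[Lemma 19]{BCP} (the lemma immediately preceding this theorem) the equivalence of condition (4) with (4') then shows $(u,u_e,w)$ is a strong solution in the sense of Definition \ref{strong sol}; uniqueness of $(u,w)$ from Henry's theorem and the explicit formula \eqref{u_e} for $u_e$ give uniqueness of the strong solution. The main obstacle is really bookkeeping rather than analysis: one must make sure that the averaging/compatibility condition $s_i(t)+s_e(t)\in L^p_{av}(\Omega)$ propagates so that $u_e$ is well-defined for all $t$, and that the fractional-power embedding of Lemma \ref{embed} is invoked with the correct exponent range so that $\mathcal{F}$ maps $Z^\alpha$ into $Z$ with the required local Lipschitz bound; once these are in place, the conclusion follows directly from the abstract theory.
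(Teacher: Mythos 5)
Your proposal is correct and follows essentially the same route as the paper: verify that $\mathcal{A}$ is sectorial (as a direct sum of the sectorial bidomain operator $A_p$ and the bounded zero operator), verify that $\mathcal{F}$ is locally H\"older in $t$ (via boundedness of $A_i(A_i+A_e)^{-1}$) and locally Lipschitz in $z$ (via Lemma \ref{embed}), and apply Henry's Theorem 3.3.3. Your additional bookkeeping on recovering $u_e$ from (\ref{u_e}) and invoking the equivalence of conditions (4) and (4') is consistent with what the paper leaves implicit.
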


\begin{proof}
It is enough to show 
\begin{itemize}
\item $\mathcal{A}$ is a sectorial operator, 
\item $\mathcal{F}:[0,\infty)\times Z^\alpha\to Z$ is a locally H${\rm{\ddot{o}}}$lder continuous function in $t$ and a locally Lipschitz continuous function in $z$, 
\end{itemize}
because of existence and uniqueness theorem \cite[Theorem 3.3.3]{Hen}. 
First part is obvious since $\mathcal{A}=(A,0)$, $A$ is a sectorial operator and $0$ is a bounded linear operator. 
Note that a bounded linear operator is a sectorial operator and direct sum of a sectorial operator is a sectorial operator \cite{Hen}. 
Second part follows from the calculation as below. 
We need to show $s:[0,\infty)\to L^p(\Omega)$ is locally $\nu$-H${\rm{\ddot{o}}}$lder continuous in time. 
For any compact set $M\subset[0,\infty)$ there exists $C>0$ such that for all $t_1,t_2\in M$, we have 
\begin{align*}
\ &\|s(t_1)-s(t_2)\|_{L^p(\Omega)} \\
=&\|s_i(t_1)-s_i(t_2)-A_i(A_i+A_e)^{-1}(s_i(t_1)-s_i(t_2)+s_e(t_1)-s_e(t_2))\|_{L^p(\Omega)}\\
\le&\|s_i(t_1)-s_i(t_2)\|_{L^p(\Omega)}+C(\|s_i(t_1)-s_i(t_2)\|_{L^p(\Omega)}+\|s_e(t_1)-s_e(t_2)\|_{L^p(\Omega)}) \\
\le&C|t_1-t_2|^\nu. 
\end{align*}
Here, we invoked the fact that $A_i(A_i+A_e)^{-1}$ is a bounded linear operator and that $s_{i,e}$ are locally $\nu$-H${\rm{\ddot{o}}}$lder continuous functions. 
\end{proof}

We conclude this paper by studying regularity of a strong solution. 
Let $0<\nu<1$, $\Omega$ be a bounded $C^{2+\nu}$-domain, $f,g$ be $C^2$ regularity, and coefficient of $\sigma_{i,e}$ be $C^{1+\nu}(\overline{\Omega})$. 

\begin{theorem}[Regularity of a strong solution]
Consider the case $B=C^\nu(\Omega)$ in Definition {\rm \ref{strong sol}} and $0\le\alpha<1$ defined by Lemma {\rm\ref{embed}}. 
Assume that $s_{i,e}\in C_{loc}^\nu([0,\infty);L^p(\Omega))$ such that $s_{i,e}(t)\in C^\nu(\Omega)$ and $\int_\Omega (s_i(t)+s_e(t))dx=0(\forall t\ge0)$. 
For $z_0=(u_0,w_0)\in Z^\alpha$ the unique strong solution $z$ of bidomain equations defined on $[0,T)$ for some $T>0$ satisfies furthermore: 
\begin{flushleft}
{\rm (1)} For any $x\in\overline{\Omega}$, $u(x,\cdot)\in C^1((0,T);\mathbb{R})$ and $w(x,\cdot)\in C^1((0,T);\mathbb{R}^m)$. \\
{\rm (2)} For any $t\in(0,T)$, $u(\cdot,t),u_{i,e}(\cdot,t)\in C^2(\overline{\Omega})$. 
\end{flushleft}
\end{theorem}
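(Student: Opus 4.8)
The plan is to bootstrap the regularity of the strong solution $(u,w)$ together with $u_{e}$ by combining the smoothing of the analytic semigroup $e^{-t\mathcal{A}}$ (available by Theorem \ref{full}) with interior and boundary Schauder estimates for the \emph{scalar}, local elliptic operators $-\nabla\cdot(\sigma_{i,e}\nabla\,\cdot\,)$. Throughout, $z=(u,w)$ and $u_{e}$ denote the strong solution on $[0,T)$ furnished by the local existence theorem, so that $z\in C([0,T);Z^{\alpha})\cap C^{1}((0,T);Z)$, $u(t)\in D(A)\subset W^{2,p}(\Omega)$ and $u_{e}(t)\in W^{2,p}(\Omega)$ for $t\in(0,T)$, and $t\mapsto\mathcal{F}(t,z(t))=S(t)-F(z(t))$ is locally $\nu$-H\"{o}lder on $(0,T)$ into $Z$ (by condition (3) of Definition \ref{strong sol} and the $\nu$-H\"{o}lder continuity of $S$ established in the proof of local existence). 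Since the constraint of Lemma \ref{embed} forces $p>d/(2-\nu)$, one has $W^{2,p}(\Omega)\hookrightarrow C^{\mu}(\overline{\Omega})$ for some $\mu>\nu$.

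\emph{Time regularity and statement (1).} First I would differentiate the variation-of-constants formula $z(t)=e^{-(t-t_{0})\mathcal{A}}z(t_{0})+\int_{t_{0}}^{t}e^{-(t-s)\mathcal{A}}\mathcal{F}(s,z(s))\,ds$ for $0<t_{0}<t<T$ and apply the standard analytic-semigroup estimates (as in \cite[Section 3.5]{Hen}): the contributions $\mathcal{A}e^{-(t-t_{0})\mathcal{A}}z(t_{0})$ and $e^{-(t-t_{0})\mathcal{A}}\mathcal{F}(t,z(t))$ belong to $D(\mathcal{A}^{\gamma})$ for every $\gamma<1$, while the commutator integral $\int_{t_{0}}^{t}\mathcal{A}e^{-(t-s)\mathcal{A}}\bigl(\mathcal{F}(s,z(s))-\mathcal{F}(t,z(t))\bigr)\,ds$ lies in $D(\mathcal{A}^{\gamma})$ only for $\gamma<\nu$. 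A finite bootstrap --- improving the time-H\"{o}lder exponent of $s\mapsto\mathcal{F}(s,z(s))$ by interpolating a Lipschitz bound for $z$ in $Z^{\gamma}$ against its boundedness in $Z^{1}$ and using that $F\colon Z^{\alpha}\to Z$ is locally Lipschitz (Lemma \ref{embed}), and, if needed, enlarging the Lebesgue exponent and restarting the evolution at a positive time --- then yields $\dot{u}(t)\in D(A_{p}^{\gamma})$ with $\gamma>d/(2p)$, hence $\dot{u}(t)\in C^{\nu'}(\overline{\Omega})$ for some $\nu'\in(0,\nu]$ with bound locally uniform in $t$, and moreover $\dot{u}\in C((0,T);C(\overline{\Omega}))$. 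Combined with $u\in C^{1}((0,T);L^{p}(\Omega))$ and $u(t)\in W^{2,p}(\Omega)\hookrightarrow C(\overline{\Omega})$, this upgrades to $u\in C^{1}((0,T);C(\overline{\Omega}))$, so $u(x,\cdot)\in C^{1}((0,T);\mathbb{R})$ for every $x\in\overline{\Omega}$. For $w$, the identity $\dot{w}=-g(u(\cdot),w)$ with $g\in C^{2}$, $u(t)\in C^{\mu}(\overline{\Omega})$ and $w(t)\in C^{\nu}(\Omega)$ shows $t\mapsto\dot{w}(t)$ is continuous into $C^{\nu}(\Omega)^{m}$, whence $w\in C^{1}((0,T);C^{\nu}(\Omega)^{m})$ and $w(x,\cdot)\in C^{1}((0,T);\mathbb{R}^{m})$. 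This proves (1).

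\emph{Spatial regularity and statement (2).} Fix $t\in(0,T)$ and set $h:=\partial_{t}u(t)+f(u(t),w(t))-s_{i}(t)$; by the previous step and the hypotheses ($s_{i}(t)\in C^{\nu}(\Omega)$, $f\in C^{2}$, $u(t)\in C^{\mu}(\overline{\Omega})$, $w(t)\in C^{\nu}(\Omega)$) one gets $h\in C^{\nu'}(\overline{\Omega})$ for some $\nu'>0$. The essential point is that $A$ is nonlocal, so Schauder theory does not apply to it directly; instead I would return to the frozen-time parabolic--elliptic system (\ref{inner2})--(\ref{boundary3}) and decouple it. Subtracting the two interior equations and the two boundary conditions, $u_{e}(t)$ solves the scalar Neumann problem $-\nabla\cdot(\sigma_{e}\nabla u_{e}(t))=s_{i}(t)+s_{e}(t)+h$ in $\Omega$ with $\nabla u_{e}(t)\cdot n=0$ on $\partial\Omega$ (using (\ref{EV}) for the boundary condition). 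Since $\sigma_{e}\in C^{1+\nu}(\overline{\Omega})$, $\Omega$ is a $C^{2+\nu}$-domain and the right-hand side lies in $C^{\nu'}(\overline{\Omega})$, the Schauder estimate gives $u_{e}(t)\in C^{2+\nu'}(\overline{\Omega})$. Substituting this into (\ref{inner2}) gives $-\nabla\cdot(\sigma_{i}\nabla u(t))=\nabla\cdot(\sigma_{i}\nabla u_{e}(t))-h\in C^{\nu'}(\overline{\Omega})$ with $\nabla u(t)\cdot n=0$ on $\partial\Omega$ (since $u(t)\in D(A)$), so again $u(t)\in C^{2+\nu'}(\overline{\Omega})$, and finally $u_{i}(t)=u(t)+u_{e}(t)\in C^{2+\nu'}(\overline{\Omega})\subset C^{2}(\overline{\Omega})$. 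This proves (2).

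The decoupling-plus-Schauder step is routine (if somewhat tedious) once $h$, hence $\partial_{t}u(t)$, is known to be H\"{o}lder continuous in space; the real obstacle is precisely to produce that \emph{spatial} regularity of $\partial_{t}u(t)$ for $t>0$. This forces one to track exponents carefully through the fractional-power embeddings $D(A_{p}^{\gamma})\hookrightarrow C^{\mu}(\overline{\Omega})$ (valid for $2\gamma>\mu+d/p$) against the ceiling $\gamma<\nu$ imposed by the merely $\nu$-H\"{o}lder dependence of the source term on time --- which is why the bootstrap, and possibly an enlargement of the Lebesgue exponent before reapplying the semigroup smoothing, cannot be avoided.
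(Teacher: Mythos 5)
Your proof is correct and follows the same two--stage skeleton as the paper's: first upgrade $\partial_t u(t)$ to a spatially H\"older function using the smoothing of the analytic semigroup, then feed the frozen-time equation into scalar Schauder theory. The differences are in execution. For the time step, the paper simply invokes Henry's Theorem 3.5.2 to get $t\mapsto z(t)$ continuously differentiable into $Z^{\nu}$ and then uses the embedding $Z^{\nu}\subset C^{\nu}(\Omega)\times(C^{\nu}(\Omega))^{m}$ to conclude $du/dt(t)\in C^{\nu}(\Omega)$; you re-derive this by hand from the variation-of-constants formula. Your version is more honest about the exponent bookkeeping: Henry's theorem caps the fractional power at $\gamma<\nu$, while the embedding into a H\"older space needs $2\gamma>d/p$, so the conclusion $du/dt(t)\in C^{\nu}(\Omega)$ implicitly requires $p$ large (or the restart-at-positive-time bootstrap you sketch); the paper's one-line appeal to the embedding of $Z^{\nu}$ quietly assumes this away. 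For the spatial step, the paper writes $P_{av}u(t)=A_e^{-1}(A_i+A_e)A_i^{-1}\{-\tfrac{du}{dt}(t)-f(u(t),w(t))+s(t)\}$ and applies H\"older elliptic regularity to the factors $A_i^{-1}$ and $A_e^{-1}$; your decoupling of the parabolic--elliptic system into a scalar Neumann problem for $u_e(t)$ followed by one for $u(t)$ is exactly the same two Schauder solves carried out at the PDE level rather than through the operator identity (your $h$ is precisely the argument of the paper's formula, and your use of (\ref{EV}) to reduce the oblique conditions to $\nabla u_e\cdot n=\nabla u\cdot n=0$ is the same reduction hidden in the definitions of $A_i$, $A_e$). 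Each route buys something: the paper's is shorter and stays at the abstract operator level; yours is self-contained, makes the compatibility of the Neumann data and the boundary conditions explicit, and exposes the genuine constraint linking $\nu$, $p$ and $d$ that governs when the argument closes.
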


\begin{proof}
We see that $t\in(0,T)\mapsto z(t)\in C^{\nu}(\Omega)\times(C^{\nu}(\Omega))^m$ is continuous (Fr$\mathrm{\acute{e}}$chet) differentiable. 
This actually implies that $(t,x)\in(0,T)\times\overline{\Omega}\mapsto z(x,t)=(u(x,t),w(t,x))$ is continuously differentiable in $t$. 
By {\cite[Theorem 3.5.2]{Hen}}, we have $t\in(0,T)\mapsto z(t)\in Z^{\nu}$ is continuously (Fr$\mathrm{\acute{e}}$chet) differentiable. 
This means $du/dt(t)\in C^{\nu}(\Omega)$. 
From (\ref{z}), 
\begin{align*}
P_{av}u(t)=A_e^{-1}(A_i+A_e)A_i^{-1}\left\{-\frac{du}{dt}(t)-f(u(t),w(t))+s(t)\right\}. 
\end{align*}
By elliptic regularity theorem for H${\rm{\ddot{o}}}$lder spaces, $P_{av}u(\cdot,t)$ is $(2+\nu)$-H${\rm{\ddot{o}}}$lder continuous since $-du/dt(t)-f(u(t),w(t))+s(t)$ is $\nu$-H${\rm{\ddot{o}}}$lder continuous. 
Therefore $u(\cdot,t)$ is in $C^2(\overline{\Omega})$.
The function $u_e$ is also in $C^2(\overline{\Omega})$ by (\ref{u_e}). 
\end{proof}

\end{document}